\documentclass[reqno,a4paper,10pt]{amsart}
\usepackage{amsfonts}
\usepackage{amssymb,amsmath, amsrefs}
\usepackage{verbatim}
\usepackage{enumerate}

\usepackage[retainorgcmds]{IEEEtrantools}
\theoremstyle{plain}
\newtheorem{theorem}{Theorem}[section]

\newtheorem{lem}[theorem]{Lemma}

\newtheorem{cor}[theorem]{Corollary}

\newtheorem{prop}[theorem]{Proposition}

\newtheorem{maintheorem}{Theorem}
\newtheorem{restatement}{Theorem}
\newtheorem{defn}[theorem]{Definition}

\theoremstyle{definition} \theoremstyle{definition}

\theoremstyle{remark}

\newtheorem{remark}{Remark}

\def\SL{{\rm SL}}

\def\GSp{{\rm GSp}}
\def\PGSp{{\rm PGSp}}
\def\Sp{{\rm Sp}}

\def\GL{{\rm GL}}

\def\GSO{{\rm GSO}}

\def\SO{{\rm SO}}
\def\O{{\rm O}}

\def\tr{{\rm tr\,}}


\begin{document}
\title{On the degree five $L$-function for $\GSp(4)$}
\author{Daniel File}
\vspace{2mm}
\address{Departement of Mathematics, 14 MacLean Hall, Iowa City, Iowa 52242-1419}\email{daniel-file@uiowa.edu}
\thanks{This work was done while I was a gruaduate student at Ohio State University as part of my Ph.D. dissertation. I wish to thank my advisor Jim Cogdell for being a patient teacher and for his helpful discussions about this work. I also thank Ramin Takloo-Bighash for many useful conversations.}
 \keywords{$L$-function, integral representation, Bessel models}
\begin{abstract}
I give a new integral representation for the degree five (standard) $L$-function for automorphic representations of $\GSp(4)$ that is a refinement of integral representation of Piatetski-Shapiro and Rallis. The new integral representation unfolds to produce the Bessel model for $\GSp(4)$ which is a unique model. The local unramified calculation uses an explicit formula for the Bessel model and differs completely from Piatetski-Shapiro and Rallis.
\end{abstract}
 \maketitle
\date{}

\section{Introduction}

In 1978 Andrianov and Kalinin established an integral representation for the degree $2n+1$ standard $L$-function of a Siegel modular form of genus $n$~\cite{andrianovkalinin1978}. Their integral involves a theta function and a Siegel Eisenstein series. The integral representation allowed them to prove the meromorphic continuation of the $L$-function, and in the case when the Siegel modular form has level $1$ they established a functional equation and determined the locations of possible poles. 

Piatetski-Shapiro and Rallis became interested in the construction of Andrianov and Kalinin because it seems to produce Euler products without using any uniqueness property. Previous examples of integral representations used either a unique model such as the Whittaker model, or the uniqueness of the invariant bilinear form between an irreducible representation and its contragradient. It is known that an automorphic representation of $\Sp_4$ (or $\GSp_4$) associated to a Siegel modular form does not have a Whittaker model. Piatetski-Shapiro and Rallis adapted the integral representation of Andrianov and Kalinin to the setting of automorphic representations and were able to obtain Euler products ~\cite{piatetski-shapirorallis1988}; however, the factorization is not the result of a unique model that would explain the local-global structure of Andrianov and Kalinin. They considered the integral
\begin{equation*}
 \int \limits_{\Sp_{2n}(F) \backslash \Sp_{2n}(\mathbb{A})} \phi(g) \theta_T(g) E(s,g) \, dg
\end{equation*}
where $E(s,g)$ is an Eisenstein series induced from a character of the Siegel parabolic subgroup, $\phi$ is a cuspidal automorphic form, $T$ is a $n$-by-$n$ symmetric matrix determining an $n$ dimensional orthogonal space, and $\theta_T(g)$ is the theta kernel for the dual reductive pair $\Sp_{2n} \times \O(V_T)$. 

Upon unfolding their integral produces the expansion of $\phi$ along the abelian unipotent radical $N$ of the Siegel parabolic subgroup. They refer to the terms in this expansion as Fourier coefficients in analogy with the Siegel modular case. The Fourier coefficients are defined as
\begin{equation*}
 \phi_T(g) = \int \limits_{N(F)\backslash N(\mathbb{A})} \phi(ng) \, \psi_T(n) \, dn.
\end{equation*}
Here, $T$ is associated to a character $\psi_T$ of $N(F)\backslash N(\mathbb{A})$. These functions $\phi_T$ do not give a unique model for the automorphic representation to which $\phi$ belongs. The corresponding statement for a finite place $v$ of $F$ is that for a character $\psi_v$ of $N(F_v)$ the inequality
\begin{equation*}
dim_\mathbb{C} \mathrm{Hom}_{N(F_v)}(\pi_v , \psi_v) \leq 1
\end{equation*}
does not hold for all irreducible admissible representation $\pi_v$ of $\Sp_{2n}(F_v)$.

However, Piatetski-Shapiro and Rallis show that their local integral is independent of the choice of Fourier coefficient when $v$ is a finite place and the local representation $\pi_v$ is spherical.
Specifically, they show that for any $\ell_T \in  \mathrm{Hom}_{N(F_v)}(\pi_v , \psi_v)$ the integral
\begin{equation*}
 \int \limits_{Mat_n(\mathcal{O}_v) \cap \GL_n(F_v)} \ell_T \left( \begin{bmatrix} g & \\ & ^{t} g ^{-1} \end{bmatrix} v_0 \right) |\det(g)|_v^{s-1/2} \, dg= d_v(s) L(\pi_v, \frac{2s+1}{2}) \ell_T(v_0)
\end{equation*}
where $v_0$ is the spherical vector for $\pi_v$, $\mathcal{O}_v$ is the ring of integers, and $d_v(s)$ is a product of local $\zeta$-factors.
At the remaining ``bad'' places the integral does not factor, and there is no local integral to compute. However, they showed that the integral over the remaining places is a meromorphic function of $s$.

In this paper I present a new integral representation for the degree five $L$-function for GSp$_4$ which is a refinement of the work of Piatetski-Shapiro and Rallis. Instead of working with the full theta kernel, the construction in this paper uses a theta integral for $\GSp_4 \times \GSO_2$. This difference has the striking effect of producing the Bessel model for $\GSp_4$ and the uniqueness that Piatetski-Shapior and Rallis expected. Therefore, this integral factors as an Euler product over all places. I compute the local unramified integral when the local representation is spherical using the formula due to Sugano~\cite{sugano1985}.

In some instances an integral representation of an $L$-function can be used to prove algebraicity of special values of that $L$-function (up to certain expected transcendental factors). Harris~\cite{harris1981}, Sturm~\cite{sturm1981}, Bocherer~\cite{bocherer1985}, and Panchishkin~\cite{panchishkin1991} applied the integral representation of Andrianov and Kalinin to prove algebraicity of special values of the standard $L$-function of certain Siegel modular forms. Shimura~\cite{shimura2000} also used an integral representation to prove algebraicity of these special values for many Siegel modular forms including forms for every congruence subgroup of $\Sp_{2n}$ over a totally real number field.

Furusawa~\cite{furusawa1993} gave an integral representation for the degree eight $L$-function for $\GSp_4 \times \GL_2$. Furusawa's integral representation unfolds to give the Bessel model for $\GSp_4$ times the Whittaker model for $\GL_2$, and he uses Sugano's formula for the spherical Bessel model to compute the unramified local integral. Let $\Phi$ be a genus $2$ Siegel eigen cusp form of weight $\ell$, and let $\pi=\otimes_v \pi_v$ be the automorphic representation for $\GSp_4$ associated to it. Let $\Psi$ be an elliptic (genus 1) eigen cusp form of weight $\ell$, and let $\tau=\otimes_v \tau_v$ be the associated representation for $\GL_2$. As an application of his integral representation Furusawa proved an algebraicity result for special values of the degree eight $L$-function $L(s, \Phi \times \Psi)$ provided that for all finite places $v$ both $\pi_v$ and $\tau_v$ are spherical. This condition is satisfied when $\Phi$ and $\Psi$ are modular forms for the full modular groups $\Sp_4(\mathbb{Z})$ and $\SL_2(\mathbb{Z})$, respectively.

A recent result of Saha~\cite{saha2009} includes the explicit computation of Bessel functions of local representations that are Steinberg. This allowed Saha to extend the special value result of Furusawa to the case when $\pi_p$ is Steinberg at some prime $p$. Pitale and Schmidt~\cite{pitaleschmidt2009} considered the local integral of Furusawa for a large class of representations $\tau_p$ and as an application extended the algebraicity result of Furusawa further.

In principle one could explicitly compute the local integral given in this paper using the formula of Saha at a place where the local representation is Steinberg. Considering the algebraicity results of Harris~\cite{harris1981}, Sturm~\cite{sturm1981}, Bocherer~\cite{bocherer1985}, and Panchishkin~\cite{panchishkin1991} that involve the integral of Andrianov and Kalinin, and the explicit computations for Bessel models due to Sugano~\cite{sugano1985}, Furusawa~\cite{furusawa1993}, and Saha~\cite{saha2009}, it would be interesting to see if the integral representation of this paper can be used to obtain any new algebraicity results. This is a question I intend to address in a later work.

\section{Summary of Results}
Let $\pi$ be an automorphic representation of GSp$_4(\mathbb{A})$, $\phi \in V_\pi$, $\nu$ an automorphic character on GSO$_2(\mathbb{A})$ the similitude orthogonal group that preserves the symmetric form determined by the symmetric matrix $T$, $\theta_\varphi(\nu^{-1})$ the theta lift of $\nu^{-1}$ to GSp$_4$ with respect to a Schwartz-Bruhat function $\varphi$,  and $E(s, f, g)$ a Siegel Eisenstein series for a section $f(s, -) \in \text{Ind}_{P(\mathbb{A})}^{G(\mathbb{A})}( \delta_P ^{1/3(s-1/2)})$. Consider the global integral
\begin{equation*}
 I(s;f, \phi, T, \nu, \varphi)=I(s):=\int \limits_{Z_\mathbb{A} GSp_4(F) \backslash GSp_4(\mathbb{A})}
E(s, f, g) \phi(g) \theta_\varphi(\nu^{-1})(g) \, dg.
\end{equation*}

Section~\ref{global} contains the proof that $I(s)$ has an Euler product expansion
\begin{equation*}
 I(s)=\int \limits_{N(\mathbb{A}_\infty) \backslash G_1(\mathbb{A}_\infty)} f(s,g) \phi^{T, \nu}(g) \omega(g, 1) \varphi(1_2) \, dg \cdot \prod \limits_{v < \infty} I_v(s)
\end{equation*}
where integrals $I_v(s)$ are defined to be
\begin{equation*}
I_v(s)=\int \limits_{N(F_v) \backslash G_1(F_v)} f_v(s, g_v) \, \phi_v ^{T, \nu}(g_v) \, \omega_v(g_v, 1) \varphi_v(1_2) \, dg_v.
\end{equation*}
The function $\phi_v^{T,\nu}$ belongs to the Bessel model of $\pi_v$.

Section~\ref{unramifiedchapter} includes the proof that under certain conditions that hold for all but a finite number of places $v$, there is a normalization $I_v^*(s)=\zeta_v(s+1)\zeta_v(2s) \, I_v(s)$ such that
\begin{equation*}
 I_v^*(s)=L(s, \pi_v \otimes \chi_T)
\end{equation*}
where $\chi_T$ is a quadratic character associated to the matrix $T$. Section~\ref{ramified} deals with the finite places that are not covered in Section~\ref{unramifiedchapter}. For these places there is a choice of data so that $I_v(s)=1$. Section~\ref{archimedean} deals with the archimedean places and shows that there choice of data to control the analytic properties of $I_v(s)$.

Combining these analyses give the following theorem.
\begin{maintheorem} \label{maintheorem}
 Let $\pi$ be a cuspidal automorphic representation of GSp$_4(\mathbb{A})$, and $\phi \in V_\pi$. Let $T$ and $\nu$ be such that $\phi^{T,\nu}\neq0$. There exists a choice of section $f(s, -) \in \text{Ind}_{P(\mathbb{A})}^{G(\mathbb{A})}( \delta_P ^{1/3(s-1/2)})$, and some $\varphi=\otimes_v \varphi_v \in \mathcal{S}( \mathbb{X}(\mathbb{A}))$ such that the normalized integral
\begin{equation*}
 I^*(s;f, \phi, T, \nu, \varphi)= d(s) \cdot L^{S}(s, \pi \otimes \chi_{T,v})
\end{equation*}
where $S$ is a finite set of bad places including all the archimedean places. Furthermore, for any complex number $s_0$, there is a choice of data so that $d(s)$ is holomorphic at $s_0$, and $d(s_0) \neq 0$.
\end{maintheorem}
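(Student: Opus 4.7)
The plan is to assemble the Main Theorem from the three local--global inputs flagged in the excerpt, with a small amount of bookkeeping between them. The hypothesis $\phi^{T,\nu}\neq 0$ guarantees that the global integral $I(s)$ is not identically zero, and Section~\ref{global} will factor it as
\begin{equation*}
I(s) \;=\; I_\infty(s) \cdot \prod_{v<\infty} I_v(s),
\end{equation*}
where $I_\infty(s)$ is the archimedean factor displayed above and each $I_v(s)$ is the local zeta integral against the Bessel-model function $\phi_v^{T,\nu}$. The first step of the proof is simply to invoke this Euler product.

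Next I would partition the finite places into a cofinite set of unramified $v$, at which the local data can be taken spherical, together with a finite complement $S_{\mathrm{fin}}$. By Section~\ref{unramifiedchapter} the spherical places give
\begin{equation*}
\prod_{v \notin S_{\mathrm{fin}},\, v<\infty} I_v(s) \;=\; \prod_{v \notin S_{\mathrm{fin}},\, v<\infty} \frac{L(s,\pi_v\otimes\chi_T)}{\zeta_v(s+1)\zeta_v(2s)}.
\end{equation*}
I would then define $I^{*}(s)$ by multiplying $I(s)$ through by the compensating product $\prod_{v \notin S_{\mathrm{fin}},\, v<\infty} \zeta_v(s+1)\zeta_v(2s)$, viewed as a meromorphic function via analytic continuation. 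With $S = S_{\mathrm{fin}}\cup\{v\mid\infty\}$, this rearranges to
\begin{equation*}
I^{*}(s) \;=\; d(s) \cdot L^{S}(s,\pi\otimes\chi_T), \qquad d(s) := I_\infty(s) \cdot \prod_{v\in S_{\mathrm{fin}}} I_v(s),
\end{equation*}
which is the identity in the theorem.

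It remains to control $d(s)$ at a prescribed complex number $s_0$. For the finite bad places $v\in S_{\mathrm{fin}}$ I would use Section~\ref{ramified} to select a section $f_v$ supported near the open cell modulo $N(F_v)$ together with a suitable Schwartz--Bruhat function $\varphi_v$ that forces $I_v(s)\equiv 1$; these places then drop out of $d(s)$. At the archimedean places I would invoke Section~\ref{archimedean} to choose data for which $I_\infty(s)$ is holomorphic near $s_0$ and nonvanishing at $s_0$, obtained by concentrating the Schwartz function near the identity so as to localize the integral to a nonzero evaluation of $\phi^{T,\nu}$ times a controlled factor. Tensoring these choices with the spherical data at the good places produces the global $f$ and $\varphi$ required by the theorem.

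The principal obstacle is the archimedean construction. The unramified computation, although involved, is reduced by Sugano's explicit formula to a finite manipulation, and the ramified finite case is a standard argument about compactly supported sections. By contrast, producing an archimedean section in $\mathrm{Ind}_{P(\mathbb{R})}^{G(\mathbb{R})}(\delta_P^{(s-1/2)/3})$ and a Schwartz function $\varphi_\infty$ that simultaneously render the local integral holomorphic and nonzero at an arbitrary $s_0$ is genuinely analytic: one must reconcile the $K_\infty$-type constraints imposed by $\phi$, the behaviour of the Eisenstein section near $s_0$, and the rapid decay of $\varphi_\infty$, all while preserving a nonzero value of $\phi^{T,\nu}$ at the point where the integral localizes.
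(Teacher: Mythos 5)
Your proposal is correct and follows essentially the same route as the paper: unfold to the Euler product of Proposition~\ref{eulerproduct}, identify the spherical factors via Propositions~\ref{inertprop1} and~\ref{splitprop1} (the paper absorbs your compensating zeta product into the normalized Eisenstein series $E^*$ of \eqref{normalizing}, which is the same normalization), set $I_v\equiv 1$ at bad finite places by Proposition~\ref{finiteram}, and control $I_\infty$ at $s_0$ by Proposition~\ref{archprop}. No substantive differences from the paper's argument.
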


\section{Notation}
Let $F$ be a number field, and let $\mathbb{A}=\mathbb{A}_F$ be its ring of adeles. For a place $v$ of $F$ denote by $F_v$ the completion of $F$ at $v$. For a non-archimedean place $v$ let $\mathcal{O}_v$ be the ring of integers of $F_v$, and let $\mathfrak{p}_v$ be its maximal ideal. Let $q_v=[ \mathcal{O}_v : \mathfrak{p}_v ]$. Let $\varpi_v$ be a choice of uniformizer for $\mathfrak{p}_v$, and let $| \cdot |_v$ be the absolute value on $F_v$, normalized so that $|\varpi_v |_v=q_v^{-1}$.

For a finite set of places $S$, let $\mathbb{A}^S= {\prod \limits_{v \notin S}}^\prime F_v$, and $\mathbb{A}_S={\prod \limits_{v \in S}} F_v$. In particular, $\mathbb{A}_\infty = {\prod \limits_{v | \infty}} F_v$, and $\mathbb{A}_{\text{fin}}={\prod \limits_{v < \infty}}^\prime F_v$.

Denote by $\text{Mat}_n$ the variety of $n \times n$ matrices defined over $F$. $\text{Sym}_n$ is the variety of symmetric $n \times n$ matrices defined over $F$.

Let 
$G= \mathrm{GSp}_4=\{g \in GL_4 \big| \ ^tg J g= \lambda_G(g) J\}$
where
\begin{align*}
J=\begin{bmatrix} & & 1 & \\ & & & 1\\ -1 & & &\\ & -1 & &\end{bmatrix}.
\end{align*}

Fix a maximal compact subgroup $K$ of $G(\mathbb{A})$ such that $K=\prod_{v} K_v$ where $K_v$ is a maximal compact subgroup of $G(F_v)$, and at all but finitely many finite places $K_v =G(\mathcal{O}_v)$. According to~\cite{moeglinwaldspurger1995}*{I.1.4} the subgroups $K_v$ can be chosen so that for every standard parabolic subgroup $P$, $G(\mathbb{A})=P(\mathbb{A})K$, and $M(\mathbb{A}) \cap K$ is a maximal compact subgroup of $M(\mathbb{A})$.

\section{Orthogonal Similitude Groups} \label{orthog}
A matrix $T \in \text{Sym}_2(F)$ with $\det(T) \neq 0$ determines a non-degenerate symmetric bilinear form $( \ , \ )_T$ on an $V_T=F^2$:
\begin{equation*}
(v_1 , v_2)_T:= {^t}v_1 T v_2.
\end{equation*}

The orthogonal group associated to this form (and matrix $T$) is
\begin{equation*}
O(V_T)=\{h\in GL_2 \big| {^t}h T h= T\}.
\end{equation*}

Similarly, the similitude group $GO(V_T) = \{h\in GL_n \big| {^t}h T h=\lambda_T(h) T\}$, and $GSO(V_T)$ is defined to be the Zariski connected component of $GO(V_T)$. Note that since $dim(V_T)=2$, and $h\in GSO(V_T)$, then $\lambda(h) = \det(h)$.

Let $\chi_T$ be the quadratic character associated to $V_T$. If $E/F$ is the discriminant field of $V_T$, i.e. $E=F \left(\sqrt{-\det(T)}\right)$, then
\begin{equation*}
\chi_T : F^\times \backslash \mathbb{A}^\times \rightarrow \mathbb{C}
\end{equation*}
is the idele class character associated to $E$ by class field theory. It has the property that $\chi_T=\otimes \chi_{T,v}$ where $\chi_{T,v}(a)= ( a , - \det(T) )_v$, and $(\, , \, )_v$ denotes the local Hilbert symbol~\cite{soudry1988}*{$\S$ 0.3}. Consequently, each $\chi_{T.v} \circ N_{E_v/F_v} \equiv 1$ where $N_{E_v / F_v}$ is the norm map~\cite{serre1973}*{Chapter III, Proposition 1}. Note that $N_{E_v / F_v}=\det=\lambda_T$.

\subsection{The Siegel Parabolic Subgroup}

Let $P=MN$ be the Siegel parabolic subgroup of $G$, i.e. $P$ stabilizes a maximal isotropic subspace $X=\mathrm{span}_F \{e_1, e_2\}$ where $e_i$ is the ith standard basis vector. Then $P$ has Levi factor $M \cong \mathrm{GL}_1 \times \mathrm{GL}_2$ and unipotent radical $N \cong \text{Sym}_2 \cong \mathbb{G}_a ^3$. For $g \in GL_2$, define
\begin{equation*}
m(g)=
\begin{bmatrix}
g &\\
 & ^t g^{-1}
\end{bmatrix} \in M.
\end{equation*}
For $X \in \text{Sym}_2$, define
\begin{align*}
n(X)=\begin{bmatrix} I_2 & X \\ & I_2 \end{bmatrix} \in N.
\end{align*}
Let $\delta_P$ be the modular character of $P$.

For $m=\begin{bmatrix} g & \\ & ^t g^{-1} \lambda \end{bmatrix} \in M$ and $n \in N$, 
$\delta_P(mn)= | \det(g)^3 \cdot \lambda^{-3}|_{\mathbb{A}} \label{adeq}$.
It is possible to extend
$\delta_P$ to all of $G$. For $g=nmk$ where $n \in N$, $m \in M$, and $k \in K$, define $\delta_P(g)=\delta_P(m)$. This is well defined because $\delta_P(m)=1$ for $m \in M \cap K$.

\section{Bessel Models and Coefficients} \label{bessel1}
\subsection{The Bessel Subgroup}
Let $\psi$ be an additive character of $F \backslash \mathbb{A}$.
There is a bijection between $\text{Sym}_2(F)$ and the characters of $N(F) \backslash N(\mathbb{A})$. For $T \in \text{Sym}_2(F)$ define
\begin{align}
\psi_T : N(F) \backslash N(\mathbb{A}) \rightarrow \mathbb{C} \nonumber \\
\psi_T(n(X))=\psi(tr(TX)). \label{char}
\end{align}

Since $M(F)$ acts on $N(F) \backslash N(\mathbb{A})$, it also acts on its characters. Define $H_T$ to be the connected component of the stabilizer of $\psi_T$ in $M$. 
For $g \in GL_2$, define
\begin{align*}
b(g)=\begin{bmatrix} g & \\ &det(g)\cdot \ {^t}g^{-1}\end{bmatrix}.
\end{align*}
Then
\begin{align*}
H_T &=\left\{ b(g)  \Big| \ {^t}gTg=det(g) \cdot T \right\}.
\end{align*}

Then $H_T$ is an algebraic group defined over $F$ isomorphic to $GSO(V_T)$ where $V_T$ is defined as above. 

The adjoint action of $M(F)$ on the characters of $N(F) \backslash N(\mathbb{A})$ has two types of orbits. They are represented by matrices
\begin{align*}
 T_\rho= \begin{bmatrix} 1 & \\ & -\rho \end{bmatrix} \quad \text{with} \, \rho \notin F^{\times, 2}, \, \text{and} \quad T_{\text{split}}=\begin{bmatrix}  & 1 \\1 & \end{bmatrix}.
\end{align*}
The quadratic spaces corresponding to these matrices have similitude orthogonal groups
\begin{align*}
GSO(V_{T_\rho}) = \left\{ \begin{bmatrix} x & \rho y \\ y & x \end{bmatrix} \Bigg| x^2 - \rho y^2 \neq 0 \right\}.
\end{align*}
and
\begin{align}
GSO(V_{T_{\text{split}}}) = \left\{ \begin{bmatrix} x & \\  & y \end{bmatrix} \Bigg| xy \neq 0 \right\}. \label{gsosplit}
\end{align}

For the rest of this article assume that $\rho \notin F^{\times, 2}$, and only consider $T=T_\rho$.
Define the Bessel subgroup $R=R_T=H_T N$.
Consider a character
\begin{equation*}
\nu : H_T(F) \backslash H_T(\mathbb{A}) \rightarrow \mathbb{C}.
\end{equation*}

Then define
\begin{align*}
\nu \otimes& \psi_T : R(F) \backslash R(\mathbb{A}) \rightarrow \mathbb{C}&\\
\nu \otimes& \psi_T (tn)= \nu (t) \psi_T(n) & t \in H_T(\mathbb{A}), \  n\in N(\mathbb{A}).
\end{align*}
This is well defined since $H_T$ normalizes $\psi_T$.

Similarly, for a place $v$ of $F$ there are local characters
\begin{equation*}
\nu_v \otimes \psi_{T,v} : R(F_v) \rightarrow \mathbb{C}.
\end{equation*}

\subsection{Non-Archimedean Local Bessel Models}
Let $v$ be a finite place of $F$.
Let $\mathcal{B}$ be the space of locally constant functions $\phi : G(F_v) \rightarrow \mathbb{C}$ satisfying
\begin{align*}
 \phi(rg)=\nu_v \otimes \psi_{T,v}(r) \phi(g)
\end{align*}
for all $r \in R(F_v)$ and all $g \in G(F_v)$.

Let $\pi_v$ be an irreducible admissable representation of $G(F_v)$. Piatetski-Shapiro and Novodvorsky~\cite{Bessel} showed that there is at most one subspace $\mathcal{B}(\pi_v) \subseteq \mathcal{B}$ such that the right regular representation of $G(F_v)$ on $\mathcal{B}(\pi_v)$ is equivalent to $\pi_v$.
If the subspace $\mathcal{B}(\pi_v)$ exists, then it is called the $\nu_v \otimes \psi_{T,v}$ Bessel model of $\pi_v$.

\subsection{Archimedean Local Bessel Models}

Now suppose $v$ is an infinite place of $F$. Let $K_v$ be the maximal compact subgroup of $G(F_v)$. Let $\mathcal{B}$ be the vector space of functions $\phi : G(F_v) \rightarrow \mathbb{C}$ with the following properties~\cite{pitaleschmidt2009}:
\begin{enumerate}
 \item $\phi$ is smooth and $K_v$-finite.
 \item $\phi(rg)= \nu_v \otimes \psi_{T,v}(r) \phi(g)$ for all $r \in R(F_v)$ and all $g \in G(F_v)$.
 \item $\phi$ is slowly increasing on $Z(F_v) \backslash G(F_v)$.
\end{enumerate}

Let $\pi_v$ be a $(\mathfrak{g_v}, K_v)$-module with space $V_{\pi_v}$. Suppose that there is a subspace $\mathcal{B}(\pi_v) \subset \mathcal{B}$, invariant under right translation by $\mathfrak{g}_v$ and $K_v$, and is isomorphic as a $(\mathfrak{g}_v, K_v)$-module to $\pi_v$, then $\mathcal{B}(\pi_v)$ is called the $\nu_v \otimes \psi_{T,v}$ Bessel model of $\pi_v$. In some instances the Bessel model at an archimedean place is known to be unique. For example, when $v$ is a real place and $\pi_v$ is a lowest or highest weight representation of $GSp_4(\mathbb{R})$ the Bessel model of $\pi_v$ is unique~\cite{pitaleschmidt2009}. It is also known to be unique when the central character of $\pi_v$ is trivial~\cite{Bessel}. The results of this article do not depend on the uniqueness of the Bessel model at any archimedean place; however, if the model is not unique, then there is no local integral.

\subsection{Bessel Coefficients}
Let $\mathcal{A}_0(G)$ be the space of cuspidal automorphic forms on $G(\mathbb{A})$. Suppose that $\pi$ is an irreducible cuspidal automorphic representation of $G(\mathbb{A})$ with space $V_\pi \subset \mathcal{A}_0(G)$. Let $\omega_\pi$ denote the central character of $\pi$. Let $\phi \in V_\pi$.

Suppose that $\nu$ is as above. Denote by $Z_{\mathbb{A}}$ the center of $G(\mathbb{A})$ so $Z_{\mathbb{A}} \subset H_T(\mathbb{A})$. Suppose that $\nu_{|Z_{\mathbb{A}}}= \omega_\pi^{-1}$. Define the $\nu \otimes \psi_T $ Bessel coefficient of $\phi$ to be
\begin{align}
\phi^{T,\nu}(g) = \int \limits_{Z_{\mathbb{A}} R(F) \backslash R(\mathbb{A})} (\nu\otimes \psi_T)^{-1}(r) \phi(rg) dr. \label{besseldef}
\end{align}

\section{Siegel Eisenstein Series} \label{siegeleisenstein}
For more details about Siegel Eisenstein series of $\Sp_{2n}$ see Kudla and Rallis~\cite{kudlarallis1994} and Section 1.1 of Kudla, Rallis, and Soudry~\cite{kudlarallissoudry1992}.

\begin{defn}[Induced Representation] \label{induced}
The induced representation of $\delta_P^{\frac{1}{3}(s-\frac{1}{2})}$ to $G(\mathbb{A})$ is defined to be
\begin{equation*}
\text{Ind}_{P(\mathbb{A})}^{G(\mathbb{A})}( \delta_P ^{\frac{1}{3}(s-\frac{1}{2})})=\left\{ \begin{array}{rl} f:G(\mathbb{A}) \rightarrow \mathbb{C} \Big| & f \ \text{is smooth, right $K$-finite, and for}\\  \Big| & p \in P(\mathbb{A}), \, f(pg)=\delta_P^{\frac{1}{3}(s+1)}(p)f(g) \end{array} \right\}. 
\end{equation*}
\end{defn}

For convenience write Ind$(s)=\text{Ind}_{P(\mathbb{A})}^{G(\mathbb{A})}( \delta_P ^{\frac{1}{3}(s-\frac{1}{2})})$. Ind$(s)$ is a representation of \linebreak $(\mathfrak{g}_\infty , K_\infty ) \times  G(\mathbb{A}_{\text{fin}})$ under right translation. A standard section $f(s, \cdot)$ is one such that its restriction to $K$ is independent of $s$. Let $f(s, \cdot ) \in \text{Ind}(s)$ be a holomorphic standard section. That is for all $g \in G(\mathbb{A})$ the function $s \mapsto f(s, g)$ is a holomorphic function.

For a finite place $v$ define $f_v^\circ(s, \cdot)$ to be the function so that $f_v^\circ(s,k)=1$ for $k \in K_v$.

There is an intertwining operator
\begin{equation*}
 M(s): \text{Ind}(s) \rightarrow \text{Ind}(1-s).
\end{equation*}
For $Re(s) >2$, $M(s)$ may be defined by means of the integral~\cite{kudlarallis1988}*{4.1}
\begin{equation*}
 M(s)f(s,g):= \int \limits_{N(\mathbb{A})} f(s, wng) \, dn
\end{equation*}
where
\begin{equation*}
w=\begin{bmatrix} & & 1 & \\ & & & 1\\ -1 & & & \\ & -1 & & \end{bmatrix}.
\end{equation*}
The induced representation factors as a restricted tensor product with respect to $f_v^\circ(s, \cdot)$:
\begin{equation*}
\text{Ind}(s)={\bigotimes_v}^\prime \text{Ind}_v(s),
\end{equation*}
and so does the intertwining operator
\begin{equation*}
M(s)=\bigotimes_v M_v(s).
\end{equation*}
There is a normalization of $M_v(s)$
\begin{equation*}
M^*_v(s)=\frac{\zeta_v(s+1) \, \zeta_v(2s)}{\zeta_v(s-1) \, \zeta_v(2s-1)}M_v(s)
\end{equation*}
where $\zeta_v(\cdot)$ is the local zeta factor for $F$ at $v$, so that
\begin{equation*}
M^*_v(s)f^\circ_v(s,g)= f^\circ_v(1-s,g).
\end{equation*}

Define the Siegel Eisenstein series
\begin{equation*}
 E(s ,f,g)= \sum \limits_{\gamma \in P(F)\backslash G(F)} f(s,\gamma g)
\end{equation*}
which converges uniformly for $Re(s)>2$ and has meromorphic continuation to all $\mathbb{C}$~\cite{kudlarallis1994}. Furthermore, the Eisenstein series satisfies the functional equation
\begin{equation*}
E(s,f,g)=E(1-s,M(s)f,g)
\end{equation*}
~\cite{kudlarallis1994}*{1.5}.
Later, it will be useful to work with the normalized Eisenstein series. Let $S$ be a finite set of places, including the archimedean places, such that for $v \notin S$ $f_v=f_v^{\circ}$. Define
\begin{equation}
 E^*(s,f,g)=\zeta^S(s+1) \, \zeta^S (2s) E(s,f,g). \label{normalizing}
\end{equation}
Kudla and Rallis completely determined the locations of possible poles of Siegel Eisenstein series~\cite{kudlarallis1994}. The normalized Eisenstein series $E^*(s,f,g)$ has at most simple poles at $s_0=1,2$ ~\cite{kudlarallis1994}*{Theorem 1.1}.

\section{The Weil Representation}

\subsection{The Schr\"odinger Model}\label{schrodinger}

Consider the orthogonal space $V_T$ with symmetric form $( \, , )_T$, and the four dimensional symplectic space $W$ with symplectic form $<\, ,>$. Let $\mathbb{W}=V_T \otimes W$ be the symplectic space with form $\ll \, , \gg$ defined on pure tensors by $\ll u\otimes v, u' \otimes v' \gg \, =(u,u')_T \, <v,v'>$ and extended to all of $\mathbb{W}$ by linearity.
The Weil representation $\omega=\omega_{\psi_T^{-1}}$ is a representation of $\widetilde{Sp}(\mathbb{W})$. However, restricting this representation to $\widetilde{Sp}(W) \times O(V_T) \hookrightarrow \widetilde{Sp}(\mathbb{W})$. Since the dimension of $V_T$ is even, there is a splitting $Sp(W) \times O(V_T) \hookrightarrow \widetilde{Sp}(W) \times O(V_T)$~\cite{rallis1982}*{Remark 2.1}.

Suppose that $X$ is a maximal isotropic subspace of $W$. Then $\mathbb{X} = X \otimes_F V_T$ is a maximal isotropic subspace of $\mathbb{W}$. The space of the Schr\"odinger model, $\mathcal{S} (\mathbb{X})$, is the space of Schwartz-Bruhat functions on $\mathbb{X}$. Let $v$ be a place of $F$. If $v$ is a finite place, then $\mathcal{S}(\mathbb{X}(F_v))$ is the space of locally constant functions with compact support. If $v$ is an infinite place, then $\mathcal{S}(\mathbb{X}(F_v))$ is the space of $C^\infty$ functions all derivatives of which are rapidly decreasing.

Identify $\mathbb{X}$ with $V_T^2=\text{Mat}_{2}$.

The local Weil representation at a finite place $v$ restricted to 
\begin{equation*}
Sp(W)(F_v) \times O(V_T)(F_v)
\end{equation*}
acts in the following way on the Schr\"odinger model
\begin{align*}
\omega_v(1, h) \varphi(x) &=\varphi(h^{-1}x),\\
\omega_v(m(a), 1) \varphi(x) &= \chi_{T,v}\circ det(a) \ |\mathrm{det}(a)|_v \ \varphi(xa),\\
\omega_v( n(X), 1) \varphi(x) &= \psi_{  ^tx T x}^{-1}(X) \varphi(x),\\
\omega_v( w, 1) \varphi (x) & = \gamma \cdot \hat{\varphi}(x).
\end{align*}
where $\gamma$ is a certain eighth root of unity, and $\hat{\varphi}$ is the Fourier transform of $\varphi$ defined by
\begin{align*}
\hat{\varphi}(x)= \int \limits_{V_T(F_v)^2} \varphi(x') \psi( (x, x')_1 ) dx'.
\end{align*}
Here $( \ , \ )_1$ is defined as follows: 
for $
x, y \in \mathbb{X}=\text{Mat}_{2}$ define
\begin{equation*}
 (x , y)_1:=tr (x \cdot y).
\end{equation*}
Note that matrices of the form $m(a)$, $n(X)$, and $w$ generate $Sp_4$.

The space $\mathcal{S}(\mathbb{X}(\mathbb{A}))$ is spanned by functions $\varphi= \otimes_v \varphi_v$ where $\varphi_v=\varphi_v^\circ$ is the normalized local spherical function for all but finitely many of the finite places $v$. At an unramified place $\varphi_v^\circ=1_{\mathbb{X}(\mathcal{O}_v)}$. The global Weil representation, $\omega={\otimes_v}^\prime \omega_v$, is the restricted tensor product with respect to the normalized spherical functions $\varphi_v^\circ$.

Suppose that $F_v=\mathbb{R}$. Assume that $\psi_T= \exp(2\pi i x)$. Let $K_{1,v}=\text{Sp}_4(\mathbb{R}) \cap O_4(\mathbb{R})$.  Let $V_T^+$ and $V_T^-$ be positive definite and negative definite, respectively, subspaces of $V_T(F_v)$ such that $V_T(F_v)=V_T^+ \oplus V_T^-$. For $x \in V_T$ define
\begin{equation*}
(x,x)_+=      \left\{
\begin{array}{rl}
 (x,x) & \text{if } x \in V_T^+ \\
- (x,x) & \text{if } x \in V_T^- \\
\end{array} \right.
\end{equation*}
For $x \in V_T^2$ let $(x,x)=( (x_i, x_j)_{i,j})\in V_T^2$. Define \begin{equation*}\varphi_v^\circ(x)= \exp(-\pi \, \tr((x,x)_+)).\end{equation*}

Now, suppose $F_v=\mathbb{C}$. Assume that $\psi_T=\exp(4 \pi i (x+\bar{x})$. In this case $K_{1,v}\cong \Sp(4)$, the compact real form of $\Sp(4, \mathbb{C})$. There is a choice of basis so \begin{equation*}(x,x)_+= {}^t \bar{x}x, \end{equation*} and \begin{equation*}\varphi_v^\circ(x)=\exp(-2\pi \, \tr((x,x)_+)).\end{equation*}

The subspace of $K_{1,v}$ finite vectors in the space of smooth vectors, $\mathcal{S}_0(\mathbb{X}(F_v)) \subset \mathcal{S}(\mathbb{X}(F_v))$, consists of functions of the form $p(x) \varphi_v^\circ $ where $p$ is a polynomial on $V_T(F_v)^2$.

\subsection{Extension to Similitude Groups}
Harris and Kudla describe how to extend the Weil representation to similitude groups~\cite{harriskudla1992}*{$\S$3}. See also~\cite{harriskudla2004} and~\cite{roberts2001}.

The Weil representation can be extended to the group 
\begin{align*}
Y=\{ (g, h) \in GSp_4 \times GSO(V_T) \ | \ \lambda_G(g)=\lambda_{T}(h) \}.
\end{align*}

For $(g,h) \in Y$ the action of $\omega_v$ is defined by
\begin{align*}
\omega_v(g,h) \varphi(x)= |\lambda_{T}(h)|_v^{-1} \ \omega_v( g_1 , 1) \varphi(h^{-1}x)
\end{align*}
where
\begin{align*}
g_1=\begin{bmatrix} I_2 & \\ & \lambda_G(g)^{-1} \cdot I_2 \end{bmatrix}g.
\end{align*}

Note that the natural projection to the first coordinate 
\begin{align*}
p_1: &Y \rightarrow GSp(4)\\
&(g,h) \mapsto g
\end{align*}
is generally not a surjective map. Indeed, $g \in Im(p_1)$ if and only if there is an $h \in GSO(V_T)$ such that $ \lambda_G(g)=\lambda_T(h)$. Define
\begin{equation*}
 G^+ := p_1 (Y).
\end{equation*}

\subsection{Theta Lifts} \label{thetalifts}
Let $H=GSO(V_T)$, and $H_1=SO(V_T)$.

\begin{defn}
The theta lift of $\nu^{-1}$ to $G^+(\mathbb{A})$ is given by the integral  \label{theta}
\begin{equation*}
\theta_\varphi(\nu^{-1})(g)=
\int \limits_{H_1(F) \backslash H_1(\mathbb{A})} \sum \limits_{x \in V_T^2(F)} \omega(g, h_g h_1) \varphi (x) \nu^{-1} (h_g h_1 ) dh_1.
\end{equation*}
\end{defn}
Here, $h_g \in H(\mathbb{A})$ is any element so that $\lambda_{T}(h_g)=\lambda_G(g)$. Note that Definition~\ref{theta} is independent of the choice $h_g$.
Since $H_1(F) \backslash H_1(\mathbb{A})$ is compact the integral is termwise absolutly convergent~\cite{weil1965}. 

There is a natural inclusion
\begin{equation*}
 G(F)^+ \backslash G(\mathbb{A})^+ \hookrightarrow G(F) \backslash G(\mathbb{A}).
\end{equation*}
Consider $\theta_\varphi(\nu^{-1})$ as a function of $G(F) \backslash G(\mathbb{A})$ by extending it by $0$~\cite{ganichino}*{$\S$7.2}.

If $\varphi$ is chosen to be a $K$-finite Schwartz-Bruhat function, then $\theta_\varphi(\nu^{-1})$ is a $K$-finite automorphic form on $G(F) \backslash G(\mathbb{A})$ \cite{harriskudla1992}.

\section{The Degree Five $L$-function} \label{lfunctionsec}
The connected component of the dual group of $\GSp_4$ is $^L G^\circ= \text{GSp}_4(\mathbb{C})$~\cite{borel1979}*{I.2.2 (5)}.
The degree five $L$-function of $\GSp_4$ corresponds to the map of $L$-groups~\cite{soudry1988}*{page 88}
\begin{equation*}
 \varrho: \GSp_4(\mathbb{C}) \rightarrow \PGSp_4(\mathbb{C}) \cong \SO_5(\mathbb{C}).
\end{equation*}
I describe the local $L$-factor explicitly when $v$ is finite and $\pi_v$ is equivalent to an unramified principal series.
Consider the maximal torus $A_0$ of $G$ and an element $t \in A_0$:
\begin{equation}
t=\text{diag}(a_1, a_2, a_0 a_1^{-1}, a_0 a_2^{-1}):=\begin{bmatrix} a_1 & & & \\ & a_2 & & \\ & & a_0 a_1^{-1} & \\ & & & a_0 a_2^{-2} \end{bmatrix}. \label{toruselement}
\end{equation}
The character lattice of $G$ is 
\begin{equation*}
 X=\mathbb{Z}e_0 \oplus \mathbb{Z}e_1 \oplus \mathbb{Z}e_2
\end{equation*}
where $e_i(t)=a_i$.
The cocharacter lattice is
\begin{equation*}
 X^{\vee}= \mathbb{Z}f_0 \oplus \mathbb{Z}f_1 \oplus \mathbb{Z}f_2
\end{equation*}
where 
\begin{align*}
 &f_0(u)=\text{diag} (1,1,u,u),  &  f_1(u)=\text{diag}( u, 1, u^{-1}, 1),\\ & f_2(u)=\text{diag}(1,u,1,u^{-1}).
\end{align*}

Suppose
\begin{equation*}
\pi_v \cong \pi_v(\chi)=Ind_{B(F_v)}^{G(F_v)}(\chi)
\end{equation*}
 where 
\begin{equation}
\chi(t)=\chi_1(a_1) \chi_2(a_2) \chi_0(a_0), \label{chidefine}
\end{equation}
 and $t$ is given by~\eqref{toruselement}. Then $^L G^\circ=\hat{G}$ has character lattice $X^\prime=X^\vee$ and cocharacter lattice $X^{\prime \vee}=X$. Let $f_i^\prime=e_i \in X^{\prime \vee}$.
Define 
\begin{equation}
\hat{t}=\prod_{i=0}^{3} f_i^\prime(\chi_i(\varpi_v)) \in  {^L G ^\circ}. \label{satakeparameter}
\end{equation}

Then $\hat{t}$ is the Satake parameter for $\pi_v(\chi)$~\cite{asgarischmidt2001}*{Lemma 2}.  The  Langlands $L$-factor is defined in~\cite{borel1979}*{II.7.2 (1)} to be
\begin{IEEEeqnarray*}{rCl}
L(s, \pi_v, \varrho):&=&\det( I - \varrho(\hat{t}) q_v^{-s}) ^{-1} \nonumber \\
&=&(1-q_v^{-s})^{-1}(1- \chi_1(\varpi_v) q_v^{-s})^{-1} (1- \chi_1(\varpi_v)^{-1}q_v^{-s})^{-1}\nonumber \\&&(1- \chi_2(\varpi_v) q_v^{-s})^{-1} (1- \chi_2(\varpi)^{-1} q_v^{-s})^{-1}.
\end{IEEEeqnarray*}

Let $S$ be a finite set of primes, including the archimedean primes, such that if $v \notin S$, then $\pi_v$ is \text{unramified}. Then the partial $L$-function is defined to be
\begin{equation*}
 L^S(s, \pi)=L^S(s, \pi, \varrho)=\prod_{v \notin S} L(s, \pi_v, \varrho).
\end{equation*}
The product converges absolutely for $Re(s) \gg 0$~\cite{langlands1971}.

\section{Global Integral Representation} \label{global}
 The main result of this section is Theorem~\ref{eulerproduct} which states that the integral unfolds as an Euler product of local integrals.

As before $G=$GSp$_4$, $G_1=$Sp$_4$, $P=MN$ is the Siegel parabolic subgoup of $G$, and let $P_1=M_1 N=P \cap G_1$ where $M_1 = M \cap G_1$.

The global integral is
\begin{align}
I(s; f, \phi, \nu)= I(s):&= \int \limits_{Z_\mathbb{A} G(F) \backslash G(\mathbb{A})}
E(s,f, g) \phi(g) \theta_\varphi(\nu^{-1})(g) \, dg\\
&=\int \limits_{Z_\mathbb{A} G(F)^+ \backslash G(\mathbb{A})^+}
E(s, f, g) \phi(g) \theta_\varphi(\nu^{-1})(g) \, dg \label{10}
\end{align}
where equality holds because $\theta_\varphi(\nu^{-1})$ is supported on $G(F)^+ \backslash G(\mathbb{A})^+$.
The central character of $E(s, f, - )$ is trivial, and the central character of $\theta_\varphi (\nu^{-1})=\omega_\pi ^{-1}$, so the integrand is $Z_\mathbb{A}$ invariant. Since $E(s, f, -)$ and $\theta_\varphi(\nu^{-1})$ are automorphic forms, they are of moderate growth. Since $\phi$ is a cuspidal automorphic form, it is rapidly decreasing on a Siegel domain~\cite{moeglinwaldspurger1995}*{I.2.18}. Therefore, the integral \eqref{10} converges everywhere that $E(s, f, -)$ does not have a pole.

Define 
\begin{align*}
\mathbb{A}^{\times, +}:=\lambda_T(H(\mathbb{A})),  & & F^{\times, +} := F^\times \cap \mathbb{A}^{\times, +} \subseteq \mathbb{A}^{\times,+},\\
\mathbb{A}^{\times, 2}:=\{a^2 | \, a \in \mathbb{A}^\times \}, & & \mathcal{C}:=\mathbb{A}^{\times, 2} F^{\times, +} \backslash \mathbb{A}^{\times, +}.
\end{align*}
There is an isomorphism
\begin{align}
Z_\mathbb{A} G_1(\mathbb{A}) G(F)^+ \backslash G(\mathbb{A})^+ \cong  \mathcal{C}. \label{quotient1}
\end{align}
The isomorphism is realized by considering the map from $G(\mathbb{A})^+ \longrightarrow \mathcal{C}$, $g \mapsto \lambda_G(g)$. It has kernel $Z_\mathbb{A} G_1(\mathbb{A}) G(F)^+$.
This fact is stated in~\cite{ganichino}.

Identify $Z_\mathbb{A}$ with the subgroup of scalar linear transformations in $H(\mathbb{A})$.

\begin{prop} \label{quotient2}
\begin{equation}
Z_\mathbb{A} H_1(\mathbb{A}) H(F) \backslash H(\mathbb{A}) \cong \mathcal{C}. \label{Hiso}
\end{equation}
\end{prop}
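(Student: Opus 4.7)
The plan is to realize the isomorphism via the similitude character $\lambda_T : H \to \mathbb{G}_m$, paralleling the argument sketched for \eqref{quotient1}. Under the matrix description of $H = GSO(V_{T_\rho})$ in Section~\ref{orthog}, $\lambda_T$ is the determinant $x^2 - \rho y^2$; equivalently, under the identification $H \cong \operatorname{Res}_{E/F}\mathbb{G}_m$ with $E = F(\sqrt{\rho})$, the character $\lambda_T$ becomes the norm map $N_{E/F}$. By the very definition of $\mathbb{A}^{\times,+}$, the induced map $\lambda_T : H(\mathbb{A}) \twoheadrightarrow \mathbb{A}^{\times,+}$ is surjective.

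The main computational step is to identify the image under $\lambda_T$ of each of the three subgroups we quotient by:
\[
\lambda_T(H_1(\mathbb{A})) = 1, \qquad \lambda_T(Z_\mathbb{A}) = \mathbb{A}^{\times,2}, \qquad \lambda_T(H(F)) = F^{\times,+}.
\]
The first identity is by definition, since $H_1 = SO(V_T) = \ker \lambda_T$. The second is because $Z_\mathbb{A}$ is identified with the scalar matrices $aI_2 \in H(\mathbb{A})$ and $\lambda_T(aI_2) = a^2$. The third is the one non-trivial point: we have $\lambda_T(H(F)) = N_{E/F}(E^\times)$, and by the Hasse norm theorem for the quadratic (hence cyclic) extension $E/F$ this coincides with $F^\times \cap \mathbb{A}^{\times,+} = F^{\times,+}$. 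Combining, $\lambda_T(Z_\mathbb{A} H_1(\mathbb{A}) H(F)) = \mathbb{A}^{\times,2} F^{\times,+}$, so $\lambda_T$ descends to a well-defined surjection from $Z_\mathbb{A} H_1(\mathbb{A}) H(F) \backslash H(\mathbb{A})$ onto $\mathcal{C}$.

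For injectivity, given $h \in H(\mathbb{A})$ with $\lambda_T(h) = a^2 \alpha$ for some $a \in \mathbb{A}^\times$ and $\alpha \in F^{\times,+}$, use $F^{\times,+} = \lambda_T(H(F))$ to write $\alpha = \lambda_T(\gamma)$ with $\gamma \in H(F)$. Then $h \cdot (aI_2)^{-1} \gamma^{-1}$ is killed by $\lambda_T$, hence lies in $H_1(\mathbb{A})$, so $h \in Z_\mathbb{A} H_1(\mathbb{A}) H(F)$; since $H$ is abelian, the ordering of the factors is immaterial. The sole non-trivial input throughout is the Hasse norm theorem for the quadratic extension $E/F$; this is the main point of the proof, and everything else is a clean transport along a surjective character with kernel $H_1$.
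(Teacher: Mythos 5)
Your argument is correct and is essentially the paper's own proof: both realize the isomorphism via the similitude character $\lambda_T$ (the norm map under $H \cong \operatorname{Res}_{E/F}\mathbb{G}_m$), get surjectivity from the definition of $\mathbb{A}^{\times,+}$, and invoke the Hasse norm theorem for the quadratic extension $E/F$ to produce an element of $H(F)$ with prescribed similitude in $F^{\times,+}$, whence the kernel is exactly $Z_\mathbb{A} H_1(\mathbb{A}) H(F)$. Your write-up is if anything slightly more careful, since you also verify explicitly the inclusion the paper dismisses as obvious.
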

\begin{proof}
 Consider the map $H(\mathbb{A}) \rightarrow \mathcal{C}$, $h \mapsto \lambda_T(h)$. This map is onto by definition of $\mathbb{A}^{\times, +}$. I must show that the kernel is $Z_\mathbb{A} H_1(\mathbb{A}) H(F)$. Suppose $\lambda_T(h)=a^2 \mu$ where $a \in \mathbb{A}^{\times, 2}$ and $\mu \in F^{\times, +}$. By Hasse's norm theorem~\cite{hasse1967} there is an element $h_\mu \in H(F)$ such that $\lambda_T(h)=\mu$. Let $z(a)$ be the scalar matrix with eigenvalue $a$. Since $\lambda_T( z(a)^{-1} h h_\mu^{-1})=1$, $h_1= z(a)^{-1} h h_\mu^{-1} \in H_1(\mathbb{A})$. Therefore, $h= z(a) h_1 h_\mu$. This shows that $Z_\mathbb{A} H_1(\mathbb{A}) H(F)$ contains the kernel of this map. The opposite inclusion is obvious.  This proves the proposition.
\end{proof}

Fix sections 
\begin{align*}
&\mathcal{C} \rightarrow G(\mathbb{A})^+ & & \mathcal{C} \rightarrow H(\mathbb{A}) \nonumber \\
&c \mapsto g_c & & c \mapsto h_c
\end{align*}

\begin{prop}
 There is a measure $dc$ on $\mathcal{C}$ and measures $dh_1$ and $dg_1$ on $H_1(F) \backslash H_1(\mathbb{A})$ and $G_1(F) \backslash G_1(\mathbb{A})$, respectively, such that
\begin{equation*}
  \int \limits_{Z_\mathbb{A} H(F) \backslash H(\mathbb{A}) } \, f(h) \, dh= \int \limits_{\mathcal{C} } \int \limits_{H_1(F) \backslash H_1(\mathbb{A})} \, f(h_1 h_c) \, dh_1 \, dc,
\end{equation*}
and
\begin{equation*}
  \int \limits_{Z_\mathbb{A} G(F)^+ \backslash G(\mathbb{A})^+ } \, f(g) \, dg= \int \limits_{\mathcal{C}} \int \limits_{G_1(F) \backslash G_1(\mathbb{A})} \, f(g_1 g_c) \, dg_1 \, dc.
\end{equation*}
\end{prop}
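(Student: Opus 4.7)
The plan is to apply Weil's quotient-integration formula to the maps
\begin{equation*}
\Lambda_G : Z_\mathbb{A} G(F)^+ \backslash G(\mathbb{A})^+ \twoheadrightarrow \mathcal{C}, \qquad \Lambda_H : Z_\mathbb{A} H(F) \backslash H(\mathbb{A}) \twoheadrightarrow \mathcal{C}
\end{equation*}
induced by the similitude characters $\lambda_G$ and $\lambda_T$. These are well-defined continuous surjections by~\eqref{quotient1} and Proposition~\ref{quotient2}, and the sections $c \mapsto g_c$ and $c \mapsto h_c$ fixed above provide continuous splittings.

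The main step is to identify the fibers. For $\Lambda_G$, since $G_1 = \ker\lambda_G$ is normal in $G$, the fiber over $c \in \mathcal{C}$ is the image of $G_1(\mathbb{A}) g_c$ in the quotient, and this identifies with $G_1(\mathbb{A})/(G_1(\mathbb{A}) \cap Z_\mathbb{A} G(F)^+)$. Writing an element of this intersection as $zh$ with $z \in Z_\mathbb{A}$ and $h \in G(F)^+$, one uses the relation $z^2 \lambda_G(h) = 1$ together with the local-global principle for squares in $F^\times$ (parallel to the Hasse norm theorem invoked in the proof of Proposition~\ref{quotient2}) to write $z = \alpha \epsilon$ with $\alpha \in F^\times$ and $\epsilon \in \{\pm 1\}^{\mathbb{A}} \subset Z_\mathbb{A}$; combined with $-I \in G_1(F)$, this gives $G_1(\mathbb{A}) \cap Z_\mathbb{A} G(F)^+ = \{\pm 1\}^{\mathbb{A}} G_1(F)$. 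Since any $Z_\mathbb{A}$-invariant function on the quotient is automatically $\{\pm 1\}^{\mathbb{A}}$-invariant, the fiber integral reduces to one over $G_1(F) \backslash G_1(\mathbb{A})$ up to the compact constant $\mathrm{vol}(\{\pm 1\}^{\mathbb{A}}/\{\pm I\})$, which is absorbed into the normalization of $dg_1$. The identical computation for $\Lambda_H$ gives the fiber $H_1(F) \backslash H_1(\mathbb{A})$, which is moreover compact because $\mathrm{SO}(V_T)$ is $F$-anisotropic.

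With the fiber structure in hand, I would fix any Haar measure $dc$ on the locally compact abelian group $\mathcal{C}$ and invoke the standard quotient-measure theorem to produce unique Haar measures $dg_1$ on $G_1(F) \backslash G_1(\mathbb{A})$ and $dh_1$ on $H_1(F) \backslash H_1(\mathbb{A})$ for which the two Fubini decompositions hold. Both formulas then share the same base measure $dc$ by construction. The only substantive technical point is the fiber identification, and in particular the computation of $G_1(\mathbb{A}) \cap Z_\mathbb{A} G(F)^+$ via the local-global principle for squares; the remainder is routine bookkeeping with Haar measures on quotients of locally compact groups.
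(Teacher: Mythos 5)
Your proof is correct and follows essentially the same route as the paper: both arguments decompose the integral along the fibration of $Z_\mathbb{A} H(F)\backslash H(\mathbb{A})$ (resp.\ $Z_\mathbb{A} G(F)^+\backslash G(\mathbb{A})^+$) over $\mathcal{C}$ using the standard quotient-measure theorem, the paper delegating the $H$-case to \cite{prasadtakloobighash2011}*{Lemma 13.2} and transporting the base measure to the $G$-side via \eqref{quotient1}. The only difference is that you make the fiber identification explicit --- computing $G_1(\mathbb{A})\cap Z_\mathbb{A} G(F)^+=\{\pm1\}^{\mathbb{A}}\,G_1(F)$ via the local--global principle for squares, and noting that the resulting compact factor $\{\pm1\}^{\mathbb{A}}/\{\pm I\}$ is absorbed into the normalization of $dg_1$ --- a detail the paper leaves inside the cited lemma.
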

\begin{proof}

Let $dh$ denote the right invariant measure on $Z_\mathbb{A} H(F) \backslash H(\mathbb{A})$. Then by \cite{prasadtakloobighash2011}*{Lemma 13.2} there are measures $dh_1$ and $dh_c$ so that for all  $f \in L^{1}(Z_\mathbb{A} H(F) \backslash H(\mathbb{A}))$
\begin{equation}
 \int \limits_{Z_\mathbb{A} H(F) \backslash H(\mathbb{A}) }f(h) \, dh= \int \limits_{Z_\mathbb{A} H_1(\mathbb{A}) H(F) \backslash H(\mathbb{A}) } \int \limits_{H_1(F) \backslash H_1(\mathbb{A})} f(h_1 h_c) \, dh_1 \, dh_c. \label{hintegral}
\end{equation}
Through the isomorphism ~\eqref{Hiso} define a measure $dc := dh_c$ on $\mathcal{C}$. By ~\eqref{quotient1} define a measure $dg_c:=dc$ on $Z_\mathbb{A} G_1(\mathbb{A}) G(F)^+ \backslash G(\mathbb{A})^+$. Then there is a choice of measures $dg$ and $dg_1$ so that for $f \in L^{1}(Z_\mathbb{A} G(F)^+ \backslash G(\mathbb{A})^+)$
\begin{equation}
 \int \limits_{Z_\mathbb{A} G(F)^+ \backslash G(\mathbb{A})^+ }f(g) \, dg= \int \limits_{Z_\mathbb{A} G_1(\mathbb{A}) G(F)^+ \backslash G(\mathbb{A})^+ } \int \limits_{G_1(F) \backslash G_1(\mathbb{A})} f(g_1 g_c) \, dg_1 \, dg_c. \label{gintegral}
\end{equation}
\end{proof}

Then \eqref{10} equals
\begin{equation}
\int \limits_{\mathcal{C}} \int \limits_{G_1(F) \backslash G_1(\mathbb{A})}
E(s, f, g_1 g_c) \phi(g_1 g_c) \theta_\varphi(\nu^{-1})(g_1 g_c) \, dg_1 \,  dc. \label{9}
\end{equation}

Denote the theta kernel by
\begin{equation*}
 \theta_\varphi(g_1g_c,h_1h_c)= \sum \limits_{x \in V_T^2(F)}  \omega(g_1 g_c, h_1 h_c ) \varphi(x).
\end{equation*}
Then
\begin{equation*}
\theta_\varphi(\nu^{-1})(g_1 g_c) = \int \limits_{H_1(F) \backslash H_1(\mathbb{A})}\theta_\varphi(g_1 g_c, h_1 h_c) \nu^{-1}(h_1 h_c) dh_1.
\end{equation*}

As noted in section~\ref{thetalifts} this integral converges absolutely. The following adjoint identity holds for the global theta integral
\begin{IEEEeqnarray*}{rCl}
\int \limits_{G_1(F) \backslash G_1(\mathbb{A})}
\int \limits_{H_1(F) \backslash H_1(\mathbb{A})}
E(s, f, g_1 g_c) \, \phi(g_1 g_c) \, \theta_\varphi(g_1 g_c, h_1 h_c) \, \nu^{-1}(h_1 h_c) \, dh_1\, dg_1 \nonumber \\
=
\int \limits_{H_1(F) \backslash H_1(\mathbb{A})} \int \limits_{G_1(F) \backslash G_1(\mathbb{A})}
E(s, f, g_1 g_c) \, \phi(g_1 g_c) \,  \theta_\varphi(g_1 g_c, h_1 h_c) \, \nu^{-1}(h_1 h_c) \,  dg_1 \, dh_1. \label{adjoint} \IEEEeqnarraynumspace
\end{IEEEeqnarray*}

Since $ P_1(F) \backslash G_1(F) \cong P(F) \backslash G(F)$, then 
\begin{equation*}
E(s, f,g) = \sum \limits_{\gamma \in P(F) \backslash G(F) } f(s, \gamma g) = \sum \limits_{\gamma \in P_1(F) \backslash G_1(F)} f(s, \gamma g).
\end{equation*}
Then the inner integral of~\eqref{adjoint} becomes
\begin{equation*}
 \int \limits_{P_1(F) \backslash G_1(\mathbb{A})}
f(s, g_1 g_c) \, \phi(g_1 g_c) \,  \theta_\varphi(g_1 g_c, h_1 h_c) \, \nu^{-1}(h_1 h_c)  \, dg_1.
\end{equation*}

Expanding the theta kernel gives
\begin{align}
 \int \limits_{G_1(F) \backslash G_1(\mathbb{A})}
f(s, g_1 g_c) \, \phi(g_1 g_c) \, \sum \limits_{x \in V_T^2(F)} \omega(g_1 g_c, h_1 h_c) \varphi (x) \nu^{-1} (h_1 h_c) \, dg_1 \label{1}
\end{align}
The Levi factor of $P_1$ is $M_1 \cong \GL_2$. The Weil representation restricted to this subgroup acts on $\mathcal{S}(\mathbb{A})$ by 
\begin{equation*}
 \omega(m(y), 1) \, \varphi_1(x)= |\det(y)|_\mathbb{A} \varphi_1(xy)
\end{equation*}
for $\varphi_1 \in \mathcal{S}(\mathbb{A})$, $y \in \GL_2(\mathbb{A})$, and $x \in Mat_2(\mathbb{A})$. Consider $x \in Mat_2(F)$. If $\det(x)=0$, then $Stab_{\GL_2(\mathbb{A})}(x)$ contains a normal unipotent subgroup. By the cuspidality of $\phi$ this term vanishes upon integration.

Therefore
\begin{IEEEeqnarray}{rCl}
& & \int \limits_{ P_1(F) \backslash G_1(\mathbb{A})} f(s, g_1 g_c) \, \phi(g_1 g_c)
 \sum \limits_{x \in Mat_2(F)} \omega(m(x) g_1 g_c, h_1 h_c) \varphi (1_2) \nu^{-1} (h_1 h_c ) dg_1\nonumber \\
&= & \int \limits_{ P_1(F) \backslash G_1(\mathbb{A})} f(s, g_1 g_c) \, \phi(g_1 g_c)
 \sum \limits_{x \in \GL_2(F)} \omega(m(x) g_1 g_c, h_1 h_c) \varphi (1_2) \nu^{-1} (h_1 h_c ) dg_1\nonumber \\
&= & \int \limits_{ N(F) \backslash G_1(\mathbb{A})} f(s, g_1 g_c) \, \phi(g_1 g_c)
  \omega(g_1 g_c, h_1 h_c) \varphi (1_2) \, \nu^{-1} (h_1 h_c ) \,  dg_1.  \label{a1}
\end{IEEEeqnarray}
Note that the integral
\begin{equation*}
 \int \limits_{ N(F) \backslash G_1(\mathbb{A})} f(s, g_1 g_c) \, \phi(g_1 g_c) \, \omega(g_1 g_c, h_1 h_c) \varphi (1_2) \, \nu^{-1} (h_1 h_c ) \, dg_1
\end{equation*}
is $H_1(F)$ invariant; however, the integrand is not.

Then
\begin{align*}
& \int \limits_{ N(F) \backslash G_1(\mathbb{A})} f(s, g_1 g_c) \, \phi(g_1 g_c) \omega(g_1 g_c, h_1 h_c) \varphi (1_2) \, dg_1\\
=& \int \limits_{ N(\mathbb{A}) \backslash G_1(\mathbb{A})} \int \limits_{N(F) \backslash N(\mathbb{A})} f(s,g_1 g_c)  \phi(ng_1 g_c)
\omega(n g_1 g_c, h_1 h_c) \, \varphi (1_2) \, dn \, dg_1.
\end{align*}
Define
\begin{equation*}
 \phi^T(g) :=\int \limits_{N(F) \backslash N(\mathbb{A}) } \phi(ng) \, \psi_T^{-1}(n) \, dn. 
\end{equation*}
Then
 \begin{equation*}
  \int \limits_{N(F) \backslash N(\mathbb{A})} \phi(ng)\omega(n g, h_g h) \, \varphi (1_2) \, dn \\
= \phi^{T}(g) \, \omega(g, h_g h) \varphi (1_2).
 \end{equation*}

This follows since for $n \in N(\mathbb{A})$
\begin{equation*}
\omega(n g_1 g_c, h_1 h_c) \varphi (1_2)= \psi_T^{-1}(n) \, \omega(g_1 g_c, h_1 h_c) \varphi (1_2),
\end{equation*}
so the integral \eqref{a1} becomes
\begin{align}
I(s) = & \int \limits_\mathcal{C} \int \limits_{H_1(F) \backslash H_1(\mathbb{A})} \int \limits_{ N(\mathbb{A}) \backslash G_1(\mathbb{A})} f(s,g_1 g_c) \phi^{T}(g_1 g_c) \nonumber\\
& \times \omega(g_1 g_c, h_1 h_c) \varphi (1_2) \, \nu^{-1} (h_1 h_c ) \, dh_1 dg_1 dg_c, \label{2}
\end{align}

Computing in the Weil representation
\begin{align}
\omega(g_1 g_c , h_1 h_c) \varphi(1_2) =& |\lambda_G(g_c)|^{-1}_\mathbb{A} \, \omega \left(  \ell \left(\lambda_G(g)^{-1}\right) g_1 g_c, 1 \right) \varphi \left( (h_1 h_c)^{-1}\right)\\
=& \chi_V \circ \det(h_1 h_c) \, |\lambda_G(g_c)|^{-1}_\mathbb{A} \, |\det (h_1 h_c)^{-1}|^{-1}_\mathbb{A} \nonumber \\ &\times \omega \left( m(h_1 h_c )^{-1}  \ell\left(\lambda_G(g_c)\right)g_1 g_c, 1 \right) \varphi(1_2). \label{41}
\end{align}
For $h \in H_T$, $\det(h) \in N_{E/F}(E^\times)$. Therefore, $\chi_V \circ \det(h)=1$.
 
Combining this with the fact that 
\begin{equation*}
 |\lambda_G(g_c)|_\mathbb{A} =|\det\left(h_1 h_c\right)|_\mathbb{A}
\end{equation*}
(see Section~\ref{orthog}) and applying it to $\eqref{41}$ gives
\begin{align}
 \omega(g_1 g_c , h_1 h_c) \varphi(1_2)&= \omega \left( m(h_1 h_c)^{-1} \ell \left( \lambda_G(g_c)^{-1} \right) g_1 g_c, 1 \right) \varphi(1_2) \nonumber\\
&= \omega \left( b(h_1 h_c)^{-1} g_1 g_c, 1 \right) \varphi(1_2). \label{3}
\end{align}

Since $\lambda_G( b(h_1 h_c))=\lambda_G(g_1 g_c)$, the map $g_1 \mapsto b(h_1 h_c) g_1 g_c^{-1}$ sends $G_1$ to itself.
\begin{prop}
 Let $d\bar{g}$ be the right invariant measure on $N(\mathbb{A}) \backslash G_1(\mathbb{A})$, and $g \in P(\mathbb{A}) \subseteq G(\mathbb{A})$. Then $d (\overline{ghg^{-1}})=|\delta_P(g)^{-1}|_\mathbb{A} \cdot d \bar{g}$. 
\end{prop}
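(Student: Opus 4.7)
My plan is to reduce to the case $g \in M(\mathbb{A})$ and then apply the decomposition of Haar measure on $G_1(\mathbb{A})$ relative to the fibration $G_1 \to N \backslash G_1$ with fiber $N$. Write $g = n_0 m_0$ with $n_0 \in N(\mathbb{A})$ and $m_0 \in M(\mathbb{A})$, so that $\phi_g = \phi_{n_0}\circ\phi_{m_0}$ where $\phi_x(h):=xhx^{-1}$. Because $N n_0 = N$, the identity $\overline{n_0 h n_0^{-1}} = \overline{h n_0^{-1}}$ exhibits $\phi_{n_0}$ on $N\backslash G_1$ as right-translation by $n_0^{-1}\in G_1(\mathbb{A})$, which preserves $d\bar g$; combined with $\delta_P(n_0)=1$, this reduces everything to the case $g = m_0\in M(\mathbb{A})$.

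For $g \in M(\mathbb{A})$ I would work with the quotient-measure identity
\begin{equation*}
\int_{G_1(\mathbb{A})} F(h)\,dh \;=\; \int_{N(\mathbb{A}) \backslash G_1(\mathbb{A})} \int_{N(\mathbb{A})} F(n\bar{h})\,dn\,d\bar{h}.
\end{equation*}
Three ingredients feed into the computation: (i) $\phi_g$ preserves $G_1(\mathbb{A})$ and $N(\mathbb{A})$, since $\lambda_G(ghg^{-1})=\lambda_G(h)$ and $M$ normalizes $N$; (ii) by the very definition of the modular character, $\phi_g$ scales $dn$ on $N(\mathbb{A})$ by $|\delta_P(g)|_\mathbb{A}$; and (iii) $\phi_g$ preserves $dh$ on $G_1(\mathbb{A})$. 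Granting (iii), substituting $n' = g n g^{-1}$ (so $dn'=|\delta_P(g)|_\mathbb{A}\,dn$) in the inner integral of the image of the identity above under $\phi_g$ yields
\begin{equation*}
\int_{N\backslash G_1} \tilde F(\bar h)\,d\bar h \;=\; |\delta_P(g)|_\mathbb{A}^{-1} \int_{N\backslash G_1} \tilde F\bigl(\overline{g\bar h g^{-1}}\bigr)\,d\bar h,
\end{equation*}
where $\tilde F(\bar h) := \int_N F(n\bar h)\,dn$ ranges over an arbitrary continuous compactly-supported function on $N\backslash G_1$. This is exactly the assertion that the change of variables $\bar h \mapsto \overline{g\bar h g^{-1}}$ multiplies $d\bar g$ by $|\delta_P(g)^{-1}|_\mathbb{A}$.

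The main obstacle is (iii): justifying that conjugation by $g \in M(\mathbb{A})\setminus M_1(\mathbb{A})$ preserves Haar on the semisimple group $G_1(\mathbb{A})$. I would argue that $g \mapsto |\det \mathrm{Ad}_{\mathfrak{sp}_4}(g)|_\mathbb{A}$ is a continuous character of $G(\mathbb{A})$ trivial on $G_1(\mathbb{A})$ (by unimodularity of the semisimple group $\mathrm{Sp}_4(\mathbb{A})$), hence factors through the similitude $\lambda_G:G\to\mathbb{G}_m$ as some $|\lambda_G(g)|_\mathbb{A}^k$. A direct check on $g=\mathrm{diag}(1,1,\lambda,\lambda)$ using the decomposition $\mathfrak{sp}_4=\mathfrak{n}^-\oplus \mathfrak{m}_1\oplus\mathfrak{n}$ gives eigenvalues $\lambda,1,\lambda^{-1}$ with multiplicities $3,4,3$, whose product is $1$, so $k=0$ and the character is trivial. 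Everything else is a routine Fubini computation anchored on the identification $\delta_P(g) = |\det \mathrm{Ad}(g)|_{\mathfrak{n}}|_\mathbb{A}$.
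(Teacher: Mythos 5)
Your proof is correct and takes essentially the same route as the paper: the quotient-measure identity for $N(\mathbb{A})\backslash G_1(\mathbb{A})$, the invariance of Haar measure on $G_1(\mathbb{A})$ under conjugation by $g\in P(\mathbb{A})$, and the scaling $d(gng^{-1})=|\delta_P(g)|_\mathbb{A}\,dn$ on $N(\mathbb{A})$. The only real difference is that you justify the conjugation-invariance step (via triviality of $|\det \mathrm{Ad}_{\mathfrak{sp}_4}(\cdot)|_\mathbb{A}$ on $\mathrm{GSp}_4(\mathbb{A})$, checked on $\mathrm{diag}(1,1,\lambda,\lambda)$), which the paper simply asserts, while your reduction to $g\in M(\mathbb{A})$ is harmless but not needed.
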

\begin{proof}
Suppose $dn$ is Haar measure on $N(\mathbb{A})$ and $d\bar{g}$ is the right invariant measure on $N(\mathbb{A}) \backslash G_1(\mathbb{A})$ normalized so that for $f \in L^1( G_1(\mathbb{A}))$
\begin{equation*}
 \int \limits_{G_1(\mathbb{A})} f(g) \, dg =  \int \limits_{N(\mathbb{A}) \backslash G_1(\mathbb{A})} \int \limits_{N(\mathbb{A})} f(n \bar{g}) \, dn \, d\bar{g}  
\end{equation*}

Let $g \in P(\mathbb{A})$. The transformation $h \mapsto ghg^{-1}$ preserves Haar measure on $G_1(\mathbb{A})$. Let $E$ be a measurable subset of $G_1(\mathbb{A})$ such that with finite volume with respect to $dg_1$, and let $vol(E)$ denote this volume. Then the volume of $N(\mathbb{A}) \backslash N(\mathbb{A}) E$ is given by the formula
\begin{equation*}
 vol(N(\mathbb{A}) \backslash N(\mathbb{A}) E) = \dfrac{vol(E)}{\int \limits_{N(\mathbb{A}) \cap E} dn}.
\end{equation*}
Since $d(gng^{-1})=|\delta_P(g)|_\mathbb{A} \cdot dn$, then $d(\overline{ghg^{-1}})=|\delta_P(g)|_\mathbb{A}^{-1} \cdot d\bar{g}$.
\end{proof}
Since $\delta_P(b(h_1 h_c))=1$, the map $g_1 \mapsto b(h_1 h_c) g_1 g_c^{-1}$ preserves the right invariant measure on $N(\mathbb{A}) \backslash G_1(\mathbb{A})$. 
Substituting $\eqref{3}$ into $\eqref{2}$, and making the above change of variables gives
 
\begin{IEEEeqnarray*}{rCl}
I(s) &= & \int \limits_\mathcal{C} \int \limits_{H_1(F) \backslash H_1(\mathbb{A})} \int \limits_{ N(\mathbb{A}) \backslash G_1(\mathbb{A})} f(s,b(h_1 h_c) g_1) \phi^{T}(b(h_1 h_c) g_1) \nonumber\\
& & \qquad \times   \omega(g_1, 1 ) \varphi (1_2) \nu^{-1} (h_1 h_c ) \, dg_1 dh_1  dg_c.
\end{IEEEeqnarray*}
The $Z_{\mathbb{A}} H_1(\mathbb{A}) H(F) \backslash H(\mathbb{A})$ integral and the $H_1(F) \backslash H_1(\mathbb{A})$ fold together ($H$ is abe\-lian so $h_c h_1 = h_1 h_c$) to produce 
\begin{align}
 \int \limits_{Z_{\mathbb{A}} H(F) \backslash H(\mathbb{A})} \int \limits_{ N(\mathbb{A}) \backslash G_1(\mathbb{A})} f(s,b(h) g_1) \, \phi^{T}(b(h) g_1) \, \omega(g_1, 1 ) \varphi (1_2) \, \nu^{-1} (h_1 h_c ) \, dg_1 dh. \label{42}
\end{align}
Since $b(h) \in P(\mathbb{A})$, and $\delta_P\left( b(h) \right)=1$, $f(s,b(h) g_1)=f(s, g_1)$.
Therefore, changing the order of integration in $\eqref{42}$ and applying Proposition~\ref{quotient2} produces
\begin{align}
&\int \limits_{ N(\mathbb{A}) \backslash G_1(\mathbb{A})} f(s, g_1) \, \omega(g_1, 1 ) \varphi (1_2) \int \limits_{Z_{\mathbb{A}} H_T(F) \backslash H_T(\mathbb{A})} \phi^{T}(h g_1) \, \nu^{-1} (h) \, dh \, dg_1 \label{4} \\
= &\int \limits_{N(\mathbb{A}) \backslash G_1(\mathbb{A})} f(s,g_1) \phi^{T, \nu}(g_1) \, \omega(g_1, 1) \varphi(1_2) \, dg_1. \label{blah3}
\end{align}

The next section shows that \eqref{4} converges absolutely for $Re(s) >2$, justifying the change in order of integration.

\begin{prop} \label{eulerproduct}
Let $\phi^{T,\nu}=\otimes_v \phi^{T,\nu}_v$, $f(s, \cdot)=\otimes_v f_v(s, \cdot)$, and $\varphi=\otimes_v \varphi_v$. Then for $Re(s)>2$
\begin{align}
& \int \limits_{Z_\mathbb{A} G(F) \backslash G(\mathbb{A})}
E(s, f,g) \phi(g) \theta(\nu^{-1}, \varphi)(g) \, dg\\
=& \int \limits_{N(\mathbb{A}) \backslash  G_1(\mathbb{A})} f(s,g) \, \phi^{T, \nu}(g) \, \omega(g, 1) \varphi(1_2) \, dg\\
=& \int \limits_{N(\mathbb{A}_\infty) \backslash G_1(\mathbb{A}_\infty)} f(s,g_\infty) \, \phi^{T, \nu}(g_\infty) \, \omega(g_\infty, 1) \varphi(1_2) \, dg_\infty \cdot \prod \limits_{v < \infty} I_v(s) \label{blah1111}
\end{align}
where
\begin{align}
I_v(s)=\int \limits_{N(F_v) \backslash G_1(F_v)} f_v(s, g_v) \, \phi_v ^{T, \nu}(g_v) \, \omega_v(g_v, 1) \varphi_v(1_2) \, dg_v. \label{blah2222}
\end{align}
\end{prop}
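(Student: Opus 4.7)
The plan is to combine the unfolding computations already carried out in the preceding pages with a standard factorization argument based on the uniqueness of the local Bessel model. The text leading up to the proposition has essentially shown, by expanding the theta kernel, discarding the singular orbit $\det(x)=0$ using cuspidality of $\phi$, collapsing the $\GL_2(F)$-sum against the $P_1(F)\backslash G_1(\mathbb{A})$ integral, and folding the $H_1(F)\backslash H_1(\mathbb{A})$ integration together with the $\mathcal{C}$ integration using Proposition~\ref{quotient2} and the $\delta_P$-invariance given by $\delta_P(b(h))=1$, that the global integral $I(s)$ equals
\begin{equation*}
\int_{N(\mathbb{A}) \backslash G_1(\mathbb{A})} f(s,g)\, \phi^{T,\nu}(g)\, \omega(g,1)\varphi(1_2)\, dg.
\end{equation*}
Thus the first displayed equality is already in hand; what remains for the proposition is to justify the interchanges of integration and to factor the resulting integral as an Euler product.

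For the interchanges, I would appeal to the absolute convergence of the original $I(s)$ for $\Re(s)>2$, which was established at the outset of Section~\ref{global} from the moderate growth of $E(s,f,\cdot)$ (away from its poles) and $\theta_\varphi(\nu^{-1})$, combined with the rapid decrease of $\phi$ on a Siegel domain. Absolute convergence lets Fubini handle the swap between the $H_1(F)\backslash H_1(\mathbb{A})$ and $G_1(F)\backslash G_1(\mathbb{A})$ integrals used in the adjoint identity, between the $\mathcal{C}$ and $G_1$ integrations, and the passage of $\int_{Z_\mathbb{A} H(F)\backslash H(\mathbb{A})}$ inside the $N(\mathbb{A})\backslash G_1(\mathbb{A})$ integral. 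This step is essentially bookkeeping and is not where the difficulty lies.

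The heart of the proposition is the factorization. By hypothesis $f(s,\cdot)=\otimes_v f_v(s,\cdot)$ and $\varphi=\otimes_v\varphi_v$, and the Weil representation factors as $\omega=\otimes_v \omega_v$, so these pieces of the integrand are given in factored form. The key point, and the one I regard as the main obstacle, is that the Bessel coefficient itself factors as $\phi^{T,\nu}=\otimes_v \phi_v^{T,\nu}$ with $\phi_v^{T,\nu}$ in the local Bessel model of $\pi_v$. This follows from the setup in Section~\ref{bessel1}: the map $\phi\mapsto \phi^{T,\nu}$ intertwines $\pi$ with a subspace of the global space of $\nu\otimes\psi_T$-equivariant functions, and since $\pi=\otimes'_v \pi_v$ with each $\pi_v$ carrying at most a one-dimensional space of $\nu_v\otimes\psi_{T,v}$-equivariant functionals (the unique-model statement recalled from Novodvorsky--Piatetski-Shapiro, and assumed at those archimedean places that enter), the global functional decomposes as a product of local ones. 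With all factors in place, $N(\mathbb{A})\backslash G_1(\mathbb{A})$ is the restricted direct product of the $N(F_v)\backslash G_1(F_v)$, so the integral splits as a restricted product of the $I_v(s)$; separating the archimedean factor from the finite-place factors yields \eqref{blah1111}. Finally, for the resulting infinite product to make sense as an ordinary product one must check that at almost all finite $v$ the local integrand is $1$ at the natural spherical vector, which is exactly a consequence of the unramified calculation of Section~\ref{unramifiedchapter}; convergence of the product for $\Re(s)>2$ then follows from the convergence of $I(s)$ together with the explicit local formulas of that section.
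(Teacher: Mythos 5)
Your proposal follows essentially the same route as the paper: the equality of the first two lines is exactly the unfolding already carried out in Section~\ref{global} (theta expansion, removal of the singular orbit by cuspidality, collapse of the $\GL_2(F)$-sum, folding of the $\mathcal{C}$ and $H_1$ integrations via Proposition~\ref{quotient2} together with $\delta_P(b(h))=1$), and the Euler factorization is obtained, as in the paper, from uniqueness of the local Bessel models, which yields $\phi^{T,\nu}=\otimes_v\phi^{T,\nu}_v$ with the archimedean places kept as a single block; the normalization at almost all finite places by the spherical data is likewise the content of Section~\ref{unramifiedchapter}.

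The one point to repair is your justification of the interchanges. You appeal to the absolute convergence of the original folded integral $I(s)$ for $\mathrm{Re}(s)>2$, but that statement does not dominate the unfolded expression: absolute values do not pass through the extraction of the Fourier--Bessel coefficient (one only has $|\phi^{T}(hg)|\leq\int|\phi(nhg)|\,dn$, which goes the wrong way), nor through the Eisenstein sum (only $|E(s,f,g)|\leq\sum_\gamma|f(s,\gamma g)|$ is available), so Fubini--Tonelli cannot be invoked directly from convergence of $I(s)$. The paper instead proves directly, in Section~\ref{abs}, that the unfolded iterated integral over $N(\mathbb{A})\backslash G_1(\mathbb{A})$ and $Z_\mathbb{A}H_T(F)\backslash H_T(\mathbb{A})$ converges absolutely for $\mathrm{Re}(s)>2$, using the boundedness of $\phi$ on $G_1(\mathbb{A})$ and the decay of the Schwartz--Bruhat data; it is this statement that licenses the final change of order of integration (and, read backwards, the earlier collapses). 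With that substitution your argument matches the paper's proof.
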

The uniqueness of the Bessel model is used to obtain the factorization in \eqref{blah1111}. When the local archimedean Bessel models are unique, the integral factors at these places as in \eqref{blah2222}.

\section{Absolute Convergence of the Unfolded Integral} \label{abs}

\begin{prop}
The integral
\begin{equation*}
  \int \limits_{N(\mathbb{A}) \backslash G_1(\mathbb{A})}  \int \limits_{Z_{\mathbb{A}} H_T(F) \backslash H_T(\mathbb{A}) } f(s,g) \, \phi^{T}(h g) \, \nu^{-1}(h) \, \omega(g, 1) \varphi(1_2) \, dh \, dg \label{absconv}
\end{equation*}
converges absolutely for $Re(s)>2$.
\end{prop}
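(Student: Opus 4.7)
My plan is to apply the Iwasawa decomposition to reduce the outer integral to one over the compact group $K_1 = K\cap G_1(\mathbb{A})$ and $M_1(\mathbb{A})\cong\GL_2(\mathbb{A})$, and then to bound the resulting integrand pointwise by combining the cuspidality of $\phi$, the Schwartz-Bruhat nature of $\omega(k,1)\varphi$, and the transformation laws of the induced section $f(s,\cdot)$. Under the Iwasawa decomposition $G_1(\mathbb{A})=N(\mathbb{A})M_1(\mathbb{A})K_1$ with $a\mapsto m(a)$, the section satisfies $f(s,m(a)k)=|\det a|^{s+1}f(s,k)$, the Weil representation acts by $\omega(m(a)k,1)\varphi(1_2)=\chi_T(\det a)|\det a|\,(\omega(k,1)\varphi)(a)$, and the right-invariant measure on $N(\mathbb{A})\backslash G_1(\mathbb{A})$ decomposes as $\delta_P(m(a))^{-1}\,da\,dk=|\det a|^{-3}\,da\,dk$. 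Since $|\chi_T|=|\nu^{-1}|=1$, the integrand in absolute value collapses to
\begin{equation*}
|\det a|^{\mathrm{Re}(s)-1}\,|(\omega(k,1)\varphi)(a)|\,|f(s,k)|\,|\phi^T(h\,m(a)k)|.
\end{equation*}

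Next, the compact $K_1$-integration contributes a bounded factor since $f(s,\cdot)$ is continuous. Proposition~\ref{quotient2} splits the $h$-integral over $Z_\mathbb{A} H_T(F)\backslash H_T(\mathbb{A})$ as an iterated integral over the compact quotient $H_1(F)\backslash H_1(\mathbb{A})$ (yielding a uniform bound) and over the noncompact quotient $\mathcal{C}$. The Schwartz-Bruhat function $\omega(k,1)\varphi\in\mathcal{S}(\mathrm{Mat}_2(\mathbb{A}))$ has, uniformly in $k$ by $K$-finiteness, compact support in the $a_{\mathrm{fin}}$-variable and super-polynomial decay as $\|a_\infty\|\to\infty$, so only the behavior of $|\det a|^{\mathrm{Re}(s)-1}\,|\phi^T(h\,m(a)k)|$ as $|\det a|\to 0$ and as $h$ escapes to infinity in $\mathcal{C}$ remains to be controlled.

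The main obstacle is producing enough decay of the Fourier coefficient $\phi^T$ in the directions just identified. I would invoke the standard rapid-decay estimate \cite{moeglinwaldspurger1995}*{I.2.18}: any cusp form on $G(\mathbb{A})$ is rapidly decreasing on a Siegel domain, a property inherited by the unipotent average $\phi^T$. Concretely, for any $N\ge 0$ and any compact subset $\Omega\subset K_1$, this yields
\begin{equation*}
|\phi^T(h\,m(a)k)|\ll_N \min(|\det a|,|\det a|^{-1})^N
\end{equation*}
uniformly for $k\in\Omega$ and $h$ ranging over a Siegel set of $H_T(\mathbb{A})$. The rapid decay in $|\det a|^{-1}$ combined with the $\|a_\infty\|\to\infty$ decay of the Schwartz factor handles the $a$-integral; the noncompact $\mathcal{C}$-direction is controlled via the relation $\lambda_T(h_c)=\lambda_G(g_c)$ from Proposition~\ref{quotient2}, which translates the escape in $\mathcal{C}$ into a Siegel-set escape for the argument of $\phi$, again handled by the same estimate. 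The bound $\mathrm{Re}(s)>2$ enters precisely as the convergence range of the Godement-Jacquet-style integral of the Schwartz-Bruhat factor $\omega(k,1)\varphi$ against $|\det a|^{s-1}$ over $\GL_2(\mathbb{A})$, matching the convergence range of the Eisenstein series in the original integral. Once the pointwise bound is in hand, absolute convergence follows by Fubini-Tonelli, which in turn justifies all the interchanges of integration carried out in Section~\ref{global}.
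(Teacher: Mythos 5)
Your overall skeleton (Iwasawa reduction of the outer integral, $K$-finiteness of $f$ and $\varphi$, reduction to a $\GL_2(\mathbb{A})$-integral of the Schwartz--Bruhat factor against a power of $|\det a|$) is the same as the paper's, and you correctly located where the constraint on $\mathrm{Re}(s)$ arises. The gap is in how you control the Bessel/Fourier factor. Your key estimate $|\phi^T(h\,m(a)k)|\ll_N \min(|\det a|,|\det a|^{-1})^N$ is not a consequence of rapid decrease of cusp forms and is false in general: the estimate of \cite{moeglinwaldspurger1995}*{I.2.18} holds on a Siegel domain, and $m(a)$ with $|\det a|\to 0$ leaves the Siegel domain in a direction where reduction theory gives no control without further argument; moreover the true behaviour of the Bessel coefficient as the torus variable shrinks is only of power type, with exponent dictated by the local representation (this is visible from Sugano's formula, or classically from the $\det(Y)^{k/2}$ factor for a holomorphic Siegel cusp form), so no superpolynomial decay is available there. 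The paper's point is that no decay is needed at all: choosing $z=1$ and $\lambda=|\lambda_G|_\mathbb{A}$ in the rapid-decrease inequality shows $\phi$ is \emph{bounded} on $G_1(\mathbb{A})$ (using $G(F)$-invariance), and then compactness of $Z_\mathbb{A}H_T(F)\backslash H_T(\mathbb{A})$ and $N(F)\backslash N(\mathbb{A})$ yields the uniform bound $|\phi^{T,\nu}(g_1)|\le C$ of Corollary~\ref{boundedlemma}; the convergence at small $|\det a|$ is then supplied entirely by the exponent $s-1$ in the resulting Tate-type integral, not by decay of $\phi^T$.

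Relatedly, your treatment of the $h$-integral is off target. Here $T=T_\rho$ is anisotropic, so $H_T\cong \mathrm{Res}_{E/F}\mathbb{G}_m$ and $Z_\mathbb{A}H_T(F)\backslash H_T(\mathbb{A})\cong \mathbb{A}_F^\times E^\times\backslash \mathbb{A}_E^\times$ is \emph{compact}; the quotient $\mathcal{C}$, being a further quotient of it, is compact as well, so there is no ``escape to infinity in $\mathcal{C}$'' to control, and the mechanism you propose (using $\lambda_T(h_c)=\lambda_G(g_c)$ to convert such escape into Siegel-set escape) does not connect to anything in this proposition, whose integral contains no $g_c$. The paper simply bounds the $h$-integral by $\mathrm{vol}\left(Z_\mathbb{A}R(F)\backslash R(\mathbb{A})\right)\cdot C_0$ using this compactness, which also justifies $|\nu|=|\psi_T|=1$ (a fact you used implicitly). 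With these two corrections your argument collapses to the paper's: a uniform bound for $\phi^{T,\nu}$ times $\int_{\GL_2(\mathbb{A})}|\varphi_i(a)|\,|\det a|_\mathbb{A}^{\mathrm{Re}(s)-1}\,da$, evaluated via the Iwasawa decomposition of $\GL_2$ exactly as in the paper's final display.
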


\begin{proof}
This argument follows~\cite{moriyama2004} to show that $\phi$ is bounded on $G_1(\mathbb{A})$. By~\cite{moeglinwaldspurger1995}*{Corollary I.2.12, I.2.18}, $\phi$ is rapidly decreasing. To be precise, suppose $\mathfrak{S}$ is a Siegel domain for $G(\mathbb{A})$. Let $G(\mathbb{A})^1 := \cap_{\chi} \ker |\chi|_\mathbb{A}$ where $\chi$ range over rational characters of $G$. Then
\begin{equation*}
 G(\mathbb{A})^1=\{ g \in G(\mathbb{A}) \Big| |\lambda_G(g)|_\mathbb{A}=1 \}.
\end{equation*}
Therefore, $G_1(\mathbb{A}) \subset G(\mathbb{A})^1$.

\begin{defn}[A rapidly decreasing function on $G(\mathbb{A})$~\cite{moeglinwaldspurger1995}*{I.2.12}]\label{rapiddecrease}
A function $\phi : \mathfrak{S} \rightarrow \mathbb{C}$ is rapidly decreasing if there exists an $r>0$ such that for all real positive valued characters $\lambda$ of the standard maximal torus $A_0$, there exists $C_0>0$ such that for all $z \in Z_\mathbb{A}$ and $g \in G(\mathbb{A})^1 \cap \mathfrak{S}$ the follwing inequality holds
\begin{equation}
 |\phi(zg)| \leq C_0 ||z||^r \lambda( a(g)) \label{siegel}
\end{equation}
where $|| \cdot ||$ is the height function on $G(\mathbb{A})$, and $a(g)$ is defined so that if $g=nak$, then $a(g)=a$ where $n \in N_0$, the unipotent radical of the Borel, $a \in A_0$, and $k \in K$.
\end{defn}
 By choosing $z=1$, and $\lambda$ to be the adelic norm of the similitude character in~\eqref{siegel}, then the right hand side of the inequality equals $C_0$ for $g \in \mathfrak{S} \cap G_1(\mathbb{A})$. Therefore, $\phi$ is bounded on $\mathfrak{S} \cap G_1(\mathbb{A})$. However, $\phi$ is $G(F)$ invariant, so $\phi$ is bounded on \begin{equation*}G(F)(\mathfrak{S} \cap G_1(\mathbb{A}))\supseteq G_1(\mathbb{A}).\end{equation*}

The quotients $Z_{\mathbb{A}} H_T(F) \backslash H_T(\mathbb{A})$ and $N(F) \backslash N(\mathbb{A})$ are compact. Therefore, \begin{equation}|\nu(r)|=|\psi_T(n)|=1 \end{equation} for all $r \in H_T(\mathbb{A}) \cap G_1(\mathbb{A})$, and all $n \in N(\mathbb{A})$.
Assume that all representatives $r \in Z_{\mathbb{A}} H_T(F) \backslash H_T(\mathbb{A})$ are chosen so that $r \in G_1(\mathbb{A})$. Then $r g_1 \in G_1(\mathbb{A})$ and 
\begin{equation}
 |\phi(r g_1)| \, |\nu^{-1}(r)| < C_0. \label{ineq}
\end{equation}
Furthermore, since $\nu_{|Z_\mathbb{A}}$ agrees with the central character of $\phi$, \eqref{ineq} holds for all $r \in H_T(\mathbb{A})$.
Then
\begin{align*}
\int \limits_{Z_{\mathbb{A}} H_T(F) \backslash H_T(\mathbb{A}) } |\phi^T( h g)| \, |\nu^{-1}(h)| \, dh &
=\int \limits_{Z_\mathbb{A} R(F) \backslash R(\mathbb{A})} | (\nu \otimes \psi_T)^{-1}(r)| \, | \phi(r g_1) | \, dr \nonumber \\
 &\leq \text{vol}\left(Z_\mathbb{A} R(F) \backslash R(\mathbb{A})\right) \cdot C_0.
\end{align*}

Therefore,
\begin{align*}
  \int \limits_{N(\mathbb{A}) \backslash G_1(\mathbb{A})}  \int \limits_{Z_{\mathbb{A}} H_T(F) \backslash H_T(\mathbb{A}) } |f(s,g)| \, |\phi^{T}(h g)| \, |\nu^{-1}(h)| \, |\omega(g, 1) \varphi(1_2)| \, dh \, dg \nonumber \\
\leq C \int \limits_{N(\mathbb{A}) \backslash G_1(\mathbb{A})} |f(s,g)| \, |\omega(g, 1) \varphi(1_2)| \, dg.
\end{align*}
The Schwartz-Bruhat function $\varphi$ is $K$-finite, as is $f(s, -)$, so there is some open subgroup $K_0 \leq K$ such that $[K : K_0]=n < \infty$, and $\varphi$ and $f(s, -)$ are $K_0$-invariant. Let $\{ k_i \}_{1 \leq i \leq n}$ be a set of irredundant coset representatives for $K / K_0$. We have
\begin{equation*}
 G_1(\mathbb{A})=P_1(\mathbb{A}) K
\end{equation*}
Suppose that $p=m(a) n \in P_1(\mathbb{A})$, and $k \in k_i K_0$. Define $\varphi_i:=\omega(k_i,1)\varphi$. Then we have
\begin{align*}
\omega(pk, 1)\varphi(1_2)&=\omega(p, 1) \omega(k, 1) \varphi(1_2)\\
&=\omega(p) \varphi_i(1_2)\\
&=\psi_T(n) \, \chi_V \circ \det(a) \, |\det(a)|_{\mathbb{A}} \, \varphi_i(a).
\end{align*}
Therefore,
\begin{align*}
  &\int \limits_{N(\mathbb{A}) \backslash G_1(\mathbb{A})} |f(s,g)| \, |\phi^{T, \nu}(g)| \, |\omega(g, 1) \varphi(1_2)| \, dg\\ \leq &
\int \limits_{N(\mathbb{A}) \backslash P_1(\mathbb{A})} \int \limits_{K} |\delta_P(p)^{-1}| \, |\delta_P(p)^{s/3+1/3}||f(s,k)| \, |\omega(pk, 1)\varphi(1_2)| \, dp \, dk\\
\leq & \text{vol}(K_0) \times \, \sum \limits_{i=1}^{n} |f(s,k_i)| \int \limits_{GL_2(\mathbb{A})}  |\varphi_i(a)| \, |\det(a)|^{s-1}_{\mathbb{A}} \, da.
\end{align*}
Absolute convergence of $\eqref{absconv}$ depends only on the convergence of
\begin{equation*}
 \int \limits_{GL_2(\mathbb{A})} |\varphi_i(a)| \, |\det(a)|^{s-1}_{\mathbb{A}} \, da.
\end{equation*}
The Schwartz-Bruhat function $\varphi_i=\otimes \varphi_{i,v}$ is rapidly decreasing, i.e. $\varphi_{i,v}$ is compactly supported at each finite places $v$, and $\varphi_i$ are rapidly decreasing when $v$ is archimedean. Let $Q$ be the Borel subgroup of $GL_2$, and let $L=\prod \limits_v L_v$, where $L_v$ is the maximal compact subgroup of $GL_2(F_v)$ so that GL$_2=QL$. There is a compact finite index open subgroup $L_i \leq L$ such that $\varphi_i$ is $L_i$ invariant. Let $\varphi_{ij}$, $j=1, \ldots, m$, be the $L$ translates of $\varphi_i$.  Then
\begin{align*}
  & \quad \int \limits_{GL_2(\mathbb{A})} |\varphi_i(a)| |\det(a)|^{s-1}_{\mathbb{A}} \, da\\ 
&=  \text{vol}(L_i) \times \sum \limits_{j=1}^{m} \, \int \limits_{Q(\mathbb{A})} |\varphi_{ij}(b)| \, |\det(b)|^{s-1}_{\mathbb{A}} \, db.
\end{align*}

Then
\begin{align}
 &\int \limits_{Q(\mathbb{A})} |\varphi_{ij}(b)| |\det(b)|^{s-1}_{\mathbb{A}} db\\
=&\int \limits_{\mathbb{A}^{\times}} \int \limits_{\mathbb{A}^\times} \int \limits_{\mathbb{A}} \, \left|\varphi_i \begin{pmatrix} a_1 & x\\ & a_2 \end{pmatrix} \right| \, |a_1|^{s-1}_{\mathbb{A}} |a_2|_{\mathbb{A}}^{s-1} \, \left|\frac{a_1}{a_2}\right|^{-1}_{\mathbb{A}} \, dx \, \frac{da_1}{|a_1|_{\mathbb{A}}} \, \frac{da_2}{|a_2|_{\mathbb{A}}} \\
=& \int \limits_{\mathbb{A}^{\times}} \int \limits_{\mathbb{A}^\times} \int \limits_{\mathbb{A}} \, \left|\varphi_i \begin{pmatrix} a_1 & x\\ & a_2 \end{pmatrix} \right| \, |a_1|^{s-3}_{\mathbb{A}} |a_2|^{s-1}_{\mathbb{A}} \, dx \, da_1 \, da_2. \label{estimate}
\end{align}
Since $\phi_i$ decreases rapidly as $|a_1|_{\mathbb{A}}$, $|a_2|_{\mathbb{A}}$, and $|x|_{\mathbb{A}}$ become large, the integral $\eqref{estimate}$ converges for $\text{Re}(s) > 2$.
\end{proof}

\begin{cor} \label{boundedlemma}
There exists a real number $C$ so that for every $g_1 \in G_1(\mathbb{A})$, 
\begin{equation}
|\phi^{T, \nu}(g_1)| \leq C.
\end{equation}
\end{cor}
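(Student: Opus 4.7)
The plan is to repackage the inner estimate that already appeared inside the proof of the previous proposition. Recall that in that argument it was shown, via Definition~\ref{rapiddecrease} applied with $z = 1$ and $\lambda$ the adelic norm of the similitude character, that the cusp form $\phi$ is bounded on $\mathfrak{S} \cap G_1(\mathbb{A})$, and hence (by $G(F)$-invariance and $G(F)(\mathfrak{S} \cap G_1(\mathbb{A})) \supseteq G_1(\mathbb{A})$) bounded on all of $G_1(\mathbb{A})$ by some constant $C_0$.

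Next I would rewrite the Bessel coefficient using its original definition \eqref{besseldef}, namely
\begin{equation*}
\phi^{T,\nu}(g_1) = \int_{Z_\mathbb{A} R(F) \backslash R(\mathbb{A})} (\nu \otimes \psi_T)^{-1}(r) \, \phi(r g_1) \, dr,
\end{equation*}
and estimate the absolute value by passing the absolute value under the integral. The quotient $Z_\mathbb{A} R(F) \backslash R(\mathbb{A})$ is compact (since $Z_\mathbb{A} H_T(F) \backslash H_T(\mathbb{A})$ and $N(F) \backslash N(\mathbb{A})$ are both compact), so $\nu \otimes \psi_T$ is a unitary character and $|(\nu \otimes \psi_T)^{-1}(r)| = 1$ on it.

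For the remaining factor $|\phi(rg_1)|$ I would use the argument already made in the preceding proof: choose coset representatives $r$ so that $r \in G_1(\mathbb{A})$ (which is legitimate because $H_T$ modulo its center surjects onto the $\lambda_T = 1$ fibre, and $N \subset G_1$); then $r g_1 \in G_1(\mathbb{A})$ and the boundedness of $\phi$ on $G_1(\mathbb{A})$ gives $|\phi(rg_1)| \leq C_0$. The fact that $\nu$ restricted to $Z_\mathbb{A}$ equals $\omega_\pi^{-1}$ ensures the integrand is well-defined on the quotient, so the choice of representative is harmless. Combining these two observations yields
\begin{equation*}
|\phi^{T,\nu}(g_1)| \leq \mathrm{vol}(Z_\mathbb{A} R(F) \backslash R(\mathbb{A})) \cdot C_0 =: C,
\end{equation*}
independently of $g_1 \in G_1(\mathbb{A})$.

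There is no real obstacle here: the corollary is essentially a restatement of the intermediate bound that appeared between equations preceding \eqref{absconv} in the proof above, and the only thing to verify is that the constant obtained there truly depends neither on $g_1$ nor on the section $f$ nor on $\varphi$, which it manifestly does not.
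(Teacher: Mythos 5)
Your proposal is correct and is essentially the paper's own derivation: the corollary is stated without separate proof precisely because it is the intermediate bound $\int_{Z_\mathbb{A} R(F)\backslash R(\mathbb{A})}|(\nu\otimes\psi_T)^{-1}(r)|\,|\phi(rg_1)|\,dr\leq \mathrm{vol}\left(Z_\mathbb{A} R(F)\backslash R(\mathbb{A})\right)\cdot C_0$ already established in the preceding proof, combined with the triangle inequality applied to \eqref{besseldef}. Nothing further is needed.
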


\begin{remark} Absolute convergence of the integral right of the line Re$(s) =2$ is the best one could hope for since the Eisenstein series $E(s, f, -)$ has a possible pole at $s=2$ by Section~\ref{siegeleisenstein}, and~\cite{kudlarallis1994}*{Theorem 1.1}.
\end{remark}

\section{Computation of the Unramified Integral}
\label{unramifiedchapter}
The local integral is
\begin{equation}
I_v(s)= \int \limits_{N(F_v) \backslash G_1(F_v)} f_v(s, g) \, \phi_v ^{T, \nu}(g) \, \omega_v(g, 1) \varphi_v(1_2) \, dg. \label{11}
\end{equation}
Let $v$ be a finite place. As before $T= \begin{bmatrix} 1 & \\ & -\rho \end{bmatrix}$.
\begin{defn}\label{unramifieddef}
The data for the integral $I_v(s)$ are unramified if all of the following hold:
\begin{enumerate}[1.]
\item $K_v = G(\mathcal{O}_v)$, and by the $\mathfrak{p}$-adic Iwasawa decomposition $G(F_v)=P(F_v) K_v$
\item $\phi_v^{T, \nu}=\phi_v^{T, \nu^\circ}$ is the normalized local spherical Bessel function, i.e. it is right $K_v$ invariant
\item $\varphi_v =\varphi_v^\circ = \mathbf{1}_{Mat_{2, 2}(\mathcal{O}_v)}$ is the normalized spherical function for the Weil representation
\item $f_v(g, s)=f_v^\circ(g,s)=\delta_{P,v} ^{\frac{s}{3}+\frac{1}{3}}(g)$ where the modulus character is extended to the entire group $G(F_v)$ by $\delta_{P,v}(pk)=\delta_{P,v}(p)$ for $p \in P(F_v)$ and $k \in K_v$
\item $\nu( H_T(\mathcal{O}_v))=1$
\item $\rho \in \mathcal{O}_v^\times$
\end{enumerate}
\end{defn}
Assume that all the data are unramified for $I_v(s)$. This is the case for almost every $v$.

Let $P_1=P \cap G_1$, and $M_1=M \cap G_1 \cong GL_2$, and $K_{1,v}=K_v \cap G_1(F_v)$.
With these assumptions the integrand of $\eqref{11}$ is constant on double cosets \newline
$N(F_v) \backslash G_1(F_v) / K_{1,v}$.  By the $\mathfrak{p}$-adic Iwasawa decomposition, $G_1=P_1(F_v) K_{1,v}$, and since \begin{equation*}M_1(F_v) \cong N(F_v) \backslash P_1(F_v)\end{equation*} representatives may be found among representatives for $M_1(F_v) / \left( M_1(F_v) \cap K_{1,v} \right).$ 
By~\cite{furusawa1993}
\begin{equation}
 \GL_2(F)= \coprod \limits_{m \geq 0} H(F_v) \begin{bmatrix} \varpi^m & \\ & 1 \end{bmatrix} \GL_2(F_v). \label{decompF}
\end{equation}

Let $m \geq 0$, and define 
\begin{equation*}
 H^m(\mathcal{O}_v) :=H(F_v) \cap \begin{bmatrix} \varpi^{m} & \\ & 1 \end{bmatrix} \GL_2(F_v) \begin{bmatrix} \varpi^{-m} & \\ & 1 \end{bmatrix} .
\end{equation*}
Note that  $H^m(\mathcal{O}_v) \subseteq H(\mathcal{O}_v)$.

Recall that $E$ is the discriminant field of $V_T$. Define $E_v := E \otimes_F F_v$. Let  $\left( \frac{E}{v} \right)$ denote the Legendre symbol which equals $-1$, $0$, or $1$ according to whether $v$ is inert, ramifies, or splits in $E$. 
By elementary number theory, $\left( \frac{E}{v} \right) =0$ for only finitely many primes $v$. This case is not considered. Call the case when $\left( \frac{E}{v} \right) =-1$ as the \textit{inert case}, and the case when $\left( \frac{E}{v} \right) =1$ as the \textit{split case}. In each case $E_v^\times \cong H(F_v)$.
If $\left( \frac{E}{v} \right)=-1$, then $E_v / F_v$ is an unramified quadratic extension.
If $\left( \frac{E}{v}\right)=+1$, then $E_v \cong F_v \oplus F_v$. In this case there is an isomorphism
\begin{equation*}
 \iota : H(F_v) \rightarrow (F_v \oplus F_v)^\times.
\end{equation*}
Let $\Pi_1 := \iota^{-1} ( (\varpi, 1))$, and $\Pi_2 := \iota^{-1}( (1, \varpi))$. Then $\det \Pi_i \in \mathfrak{p}$ for $i=1,2$, and $\Pi_1 \Pi_2 = diag(\varpi, \varpi)$.

\subsection{The Inert Case}

\begin{prop}
 If $\left( \frac{E}{v} \right) = -1$, then a complete set of irredundant coset representatives for $N(F_v)\backslash G_1(F_v) / K_1$ is given by 
\begin{equation*}
 m\left( h \begin{bmatrix} \varpi^{m+n} & \\ & \varpi^n \end{bmatrix} \right)
\end{equation*}
where $m\geq 0$, $n \in \mathbb{Z}$, and $h$ runs over a set of representatives for $H(\mathcal{O}_v) / H^m(\mathcal{O}_v)$.
\end{prop}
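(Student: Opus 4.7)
The plan is to chain together two decompositions: the Iwasawa decomposition on $G_1(F_v)$, which reduces the problem to parametrizing $\GL_2(F_v)/\GL_2(\mathcal{O}_v)$, and the decomposition (8.2), which organizes the $H(F_v)$-orbit structure. By the $\mathfrak{p}$-adic Iwasawa decomposition $G_1(F_v) = P_1(F_v) K_{1,v}$, together with $N(F_v) \backslash P_1(F_v) \cong M_1(F_v) \cong \GL_2(F_v)$ via $m(\cdot)$, the double cosets $N(F_v) \backslash G_1(F_v) / K_{1,v}$ are in bijection with $\GL_2(F_v)/\GL_2(\mathcal{O}_v)$. Decomposition (8.2) further partitions $\GL_2(F_v)$ into disjoint pieces $H(F_v) \begin{bmatrix} \varpi^m & \\ & 1 \end{bmatrix} \GL_2(\mathcal{O}_v)$ for $m \geq 0$, so I need only parametrize $\GL_2(\mathcal{O}_v)$-cosets inside each such piece.

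Within the $m$-th piece, two elements $h_1' \begin{bmatrix} \varpi^m & \\ & 1 \end{bmatrix}$ and $h_2' \begin{bmatrix} \varpi^m & \\ & 1 \end{bmatrix}$ (with $h_i' \in H(F_v)$) lie in the same $\GL_2(\mathcal{O}_v)$-coset exactly when $(h_2')^{-1} h_1' \in H(F_v) \cap \bigl( \begin{bmatrix} \varpi^m & \\ & 1 \end{bmatrix} \GL_2(\mathcal{O}_v) \begin{bmatrix} \varpi^{-m} & \\ & 1 \end{bmatrix} \bigr) = H^m(\mathcal{O}_v)$, so the cosets are parametrized by $H(F_v)/H^m(\mathcal{O}_v)$. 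In the inert case $H(F_v) \cong E_v^\times$ with $\varpi$ still a uniformizer of $\mathcal{O}_{E_v}$, and the valuation yields a split short exact sequence $1 \to H(\mathcal{O}_v) \to H(F_v) \to \mathbb{Z} \to 0$ with splitting $n \mapsto \varpi^n I_2$. Writing $h' = h \cdot \varpi^n I_2$ uniquely with $h \in H(\mathcal{O}_v)$ and $n \in \mathbb{Z}$, and absorbing the scalar via $\varpi^n I_2 \begin{bmatrix} \varpi^m & \\ & 1 \end{bmatrix} = \begin{bmatrix} \varpi^{m+n} & \\ & \varpi^n \end{bmatrix}$, produces the claimed list of representatives with $h$ ranging over $H(\mathcal{O}_v)/H^m(\mathcal{O}_v)$.

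For irredundancy, suppose two triples $(m_i, n_i, h_i)$ give the same $\GL_2(\mathcal{O}_v)$-coset. Disjointness in (8.2) forces $m_1 = m_2 = m$. Rearranging, $h_2^{-1} h_1 \cdot \varpi^{n_1 - n_2} I_2$ sits inside $\begin{bmatrix} \varpi^m & \\ & 1 \end{bmatrix} \GL_2(\mathcal{O}_v) \begin{bmatrix} \varpi^{-m} & \\ & 1 \end{bmatrix}$, whose determinant is a unit. Since $\det$ restricted to $H$ is $N_{E_v/F_v}$ and in the inert case $N_{E_v/F_v}(\mathcal{O}_{E_v}^\times) \subseteq \mathcal{O}_v^\times$, the determinant reads $\det(h_2^{-1} h_1) \cdot \varpi^{2(n_1 - n_2)} \in \mathcal{O}_v^\times$, which forces $n_1 = n_2$. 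What remains, $h_2^{-1} h_1 \in H^m(\mathcal{O}_v)$, is exactly the stated equivalence on the $h$-factor.

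The main obstacle is the central-splitting step, which relies crucially on $v$ being inert: it is precisely because $\varpi$ remains prime in $\mathcal{O}_{E_v}$ that $\{\varpi^n I_2 : n \in \mathbb{Z}\}$ gives a full set of representatives for $H(F_v)/H(\mathcal{O}_v)$. In the split case $E_v \cong F_v \oplus F_v$ and $\varpi I_2 = \Pi_1 \Pi_2$ is no longer primitive, so one would need $\Pi_1$ and $\Pi_2$ as independent generators of $H(F_v)/H(\mathcal{O}_v)$, yielding a distinct list of representatives that must be handled separately.
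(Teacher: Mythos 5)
Your argument is correct and is essentially the paper's own proof: the paper likewise combines the Iwasawa reduction to $\GL_2(F_v)/\GL_2(\mathcal{O}_v)$, Furusawa's decomposition \eqref{decompF}, and the inert-case fact $H(F_v)=Z(F_v)H(\mathcal{O}_v)$ (your split exact sequence with splitting $n\mapsto \varpi^n I_2$ is the same statement). You simply spell out the details the paper leaves implicit, notably the identification of stabilizers with $H^m(\mathcal{O}_v)$ and the determinant argument for irredundancy.
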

\begin{proof}
 This follows from the above decomposition \eqref{decompF} and the fact that $H(F_v)=Z(F_v)H(\mathcal{O}_v)$ where $Z$ is the center of $\GL_2$.
\end{proof}

Since 
\begin{equation*}
\omega_v(m(g),1)\varphi_v^\circ (1_2)=\chi_{T,v}\circ \det(g) \, |\det(g)|_v \, \varphi_v^\circ(g),
\end{equation*}
only $m(g)$ with $g \in \text{GL}_2(F_v) \cap \text{Mat}_{2, 2} (\mathcal{O}_v)$ are in the support of the \begin{equation*}\omega_v(m(g) , 1) \varphi_v^\circ (1_2).\end{equation*} Hence, a complete set of irredundant representatives for the cosets in the support of $\omega_v(m(g) , 1) \varphi_v(1_2)$ is given by the representatives listed above with $n \geq 0$.
\begin{prop}
The various components of the integrand are computed as follows
\begin{align}
&\delta_P ^{\frac{s}{3}+\frac{1}{3}} \left( m \left( h  \begin{bmatrix} \varpi_v^{n+m} & \\ & \varpi^n \end{bmatrix} \right) \right)=q_v^{-(2n+m)(s+1)} \label{integrand1}\\
&\omega_v \left( m \left(h \begin{bmatrix} \varpi_v^{n+m} &  \\ & \varpi_v^n\\ \end{bmatrix} \right), 1 \right) \varphi_v^\circ(1_2)=\chi_{T,v}(\varpi_v^{2n+m}) q_v^{-(2n+m)} \label{integrand2}\\
&\text{vol}\left(N(F_v) \backslash N(F_v) \,  m\left( h \begin{bmatrix} \varpi^{n+m} & \\ & \varpi^n \end{bmatrix} \right) \, K_{1,v}\right)=q_v^{6n+3m} \label{integrand3}
\end{align}
\end{prop}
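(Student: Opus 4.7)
The three identities are essentially bookkeeping using definitions already in the paper, so the plan is to verify each one directly.

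For \eqref{integrand1}, I would invoke the formula $\delta_P(m(g)) = |\det g|_v^3$ from Section~\ref{orthog} (the case $\lambda = 1$ for elements of $G_1$). With $a = h\begin{bmatrix}\varpi^{n+m} & \\ & \varpi^n\end{bmatrix}$ one has $\det a = \det(h)\cdot\varpi^{2n+m}$, and since $h \in H(\mathcal{O}_v) \subseteq \GL_2(\mathcal{O}_v)$ the determinant $\det h$ is a local unit, so $|\det a|_v = q_v^{-(2n+m)}$. Raising to the power $(s+1)/3$ (which equals $s/3 + 1/3$) yields the stated $q_v^{-(2n+m)(s+1)}$.

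For \eqref{integrand2}, the Schr\"odinger formulae in Section~\ref{schrodinger} give
\begin{equation*}
\omega_v(m(a),1)\varphi_v^\circ(1_2) = \chi_{T,v}(\det a)\,|\det a|_v\,\varphi_v^\circ(a).
\end{equation*}
The matrix $a$ lies in $\mathrm{Mat}_{2,2}(\mathcal{O}_v)$ because $n,m \geq 0$ and $h$ is integral, so $\varphi_v^\circ(a) = 1$. For the character factor, $\chi_{T,v}(\det a) = \chi_{T,v}(\det h)\cdot\chi_{T,v}(\varpi^{2n+m})$. Now $\det h = \lambda_T(h) = N_{E_v/F_v}(h)$ under the identification $H(F_v) \cong E_v^\times$, and the property $\chi_{T,v}\circ N_{E_v/F_v} \equiv 1$ noted in Section~\ref{orthog} forces $\chi_{T,v}(\det h) = 1$. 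Combined with $|\det a|_v = q_v^{-(2n+m)}$, this produces the claimed expression.

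For \eqref{integrand3} I would use the Iwasawa decomposition $G_1(F_v) = N(F_v)M_1(F_v)K_{1,v}$. With the unramified normalizations (Haar measure giving volume $1$ to $K_{1,v}$, to $N(\mathcal{O}_v)$, and to $M_1(\mathcal{O}_v)$), the right-$G_1(F_v)$-invariant quotient measure on $N(F_v)\backslash G_1(F_v)$ pulls back through the Iwasawa parametrization with Jacobian $\delta_P^{-1}(m)$; explicitly,
\begin{equation*}
\int_{N(F_v)\backslash G_1(F_v)} f(g)\,dg \;=\; \int_{K_{1,v}}\int_{(M_1\cap K_{1,v})\backslash M_1(F_v)} f(mk)\,\delta_P(m)^{-1}\,dm\,dk.
\end{equation*}
Since $m(a)$ normalizes $N$, the double coset $N(F_v)\backslash N(F_v)m(a)K_{1,v}$ is parametrized by $K_{1,v}/(N(F_v)\cap K_{1,v}) = K_{1,v}/N(\mathcal{O}_v)$ and carries the constant weight $\delta_P^{-1}(m(a))$. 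Plugging in $\delta_P^{-1}(m(a)) = |\det a|_v^{-3} = q_v^{3(2n+m)}$ yields $q_v^{6n+3m}$ as desired.

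Of the three, \eqref{integrand3} is the main obstacle: (1) and (2) are pure substitution into formulas stated earlier, but (3) requires committing to a specific Haar normalization and verifying the Jacobian factor coming from the Iwasawa decomposition; the remaining identities then fall out automatically.
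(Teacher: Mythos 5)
Your proposal is correct and follows the same route as the paper: \eqref{integrand1} is the substitution into the formula $\delta_P(m(g))=|\det(g)|^3$, \eqref{integrand2} is the Schr\"odinger-model action together with $\chi_{T,v}\circ\det|_{H}\equiv 1$ and integrality of the argument, and \eqref{integrand3} is the Iwasawa-decomposition measure comparison giving the coset volume $\delta_{P,v}^{-1}(m(a))=q_v^{6n+3m}$. Your write-up actually supplies more detail (e.g.\ the unit determinant of $h$ and the norm argument for the character) than the paper's proof, which simply cites the earlier formulas.
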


\begin{proof}
The proof of \eqref{integrand1} is just an application of \eqref{adeq}, and \eqref{integrand2} follows from Section~\ref{schrodinger}.

To prove \eqref{integrand3} observe that there are measures $dn$ on $N$ and $d\bar{g}$ on $N \backslash G_1$ so that
\begin{equation*}
 \int \limits_{G_1} f(g) dg = \int \limits_{N \backslash G_1} \int \limits_{N} f(n \bar{g}) \, dn \, d\bar{g}
\end{equation*}
and
\begin{equation*}
 \int \limits_{G_1} f(nmk) dg = \int \limits_{M_1} \int \limits_{ K} \int  \limits_{N} \delta_{P}^{-1}(m) \, f(nmk) \, dn\, dk \, dm.
\end{equation*}
Therefore, $\delta_{P,v}^{-1} \cdot d\dot{g}$ gives a right invariant measure on $N(F_v) \backslash G_1(F_v)$ normalized so that vol $\left( N(F_v) \backslash N(F_v)K_{1,v} \right) =1$.

Again, let $h$ be a representative for an element of $H(\mathcal{O}_v) / H^m(\mathcal{O}_v)$, and let
\begin{equation*}
A(h,m,n)=N(F_v) \backslash N(F_v) \,  m\left( h  \begin{bmatrix} \varpi_v^{m+n} &  \\ & \varpi_v^{n}\\ \end{bmatrix} \right) \, K_{1,v}.
\end{equation*}
Then
$\text{vol}(A(h,m,n))=\delta_{P,v}^{-1}\left( m\left( h  \begin{bmatrix} \varpi_v^{m+n} &  \\ & \varpi_v^n\\ \end{bmatrix} \right) \right)=q_v^{3m+6n}$
\end{proof}

Note that the integrand does not depend on the coset of $H(\mathcal{O}_v) / H^{m}(\mathcal{O}_v)$.
\begin{lem}[Furusawa]
 The index $ [ H(\mathcal{O}_v) : H^{m}(\mathcal{O}_v) ]=q^m(1-\left( \frac{E}{v}\right)\frac{1}{q})$ for $m \geq 1$.
\end{lem}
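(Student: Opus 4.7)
The plan is to parametrize $H(F_v)$ via the isomorphism $b(x,y) \leftrightarrow x + y\sqrt{\rho} \in E_v^\times$, rewrite $H^m(\mathcal{O}_v)$ as the unit group of the order of conductor $\varpi_v^m$ in $E_v$, and then compute the index by a standard filtration argument.

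First I would unwind the definition of $H^m(\mathcal{O}_v)$. Conjugating $b(x,y)$ by $\mathrm{diag}(\varpi_v^m,1)$ yields $\begin{bmatrix} x & \rho y \varpi_v^{-m} \\ y \varpi_v^m & x \end{bmatrix}$, so (using $\rho\in\mathcal{O}_v^\times$) the condition that this matrix lies in $\GL_2(\mathcal{O}_v)$ is equivalent to $x\in\mathcal{O}_v$, $y\in\mathfrak{p}_v^m$, and $x^2-\rho y^2\in\mathcal{O}_v^\times$.  Under $b(x,y)\leftrightarrow x+y\sqrt{\rho}$, this is precisely the unit group of the $\mathcal{O}_v$-order
\[
R_m := \mathcal{O}_v + \mathfrak{p}_v^m\,\mathcal{O}_{E_v},
\]
while $H(\mathcal{O}_v)$ becomes $\mathcal{O}_{E_v}^\times$.  (In the split case this identification uses that $\sqrt{\rho}\in\mathcal{O}_v^\times$, so that the two eigenvalues $x\pm y\sqrt{\rho}$ lie simultaneously in $\mathcal{O}_v^\times$ iff $x,y\in\mathcal{O}_v$ and $x^2-\rho y^2\in\mathcal{O}_v^\times$.)

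Next I would exploit the chain $1+\mathfrak{p}_v^m\mathcal{O}_{E_v}\subset R_m^\times\subset \mathcal{O}_{E_v}^\times$ and pass to the finite quotient ring $A := \mathcal{O}_{E_v}/\mathfrak{p}_v^m\mathcal{O}_{E_v}$.  The natural reduction gives $\mathcal{O}_{E_v}^\times/(1+\mathfrak{p}_v^m\mathcal{O}_{E_v})\cong A^\times$, and the image of $R_m^\times$ in $A^\times$ is exactly the image of $\mathcal{O}_v^\times$, namely $\mathcal{O}_v^\times/(1+\mathfrak{p}_v^m)$, of order $q_v^{m-1}(q_v-1)$.  Hence
\[
[H(\mathcal{O}_v):H^m(\mathcal{O}_v)] \;=\; \frac{|A^\times|}{q_v^{m-1}(q_v-1)}.
\]
A residue-field count now finishes the job uniformly in the two cases.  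In the inert case, $A$ is local of cardinality $q_v^{2m}$ with residue field $\mathbb{F}_{q_v^2}$, so $|A^\times|=q_v^{2m-2}(q_v^2-1)$ and the ratio is $q_v^{m-1}(q_v+1)=q_v^m(1+q_v^{-1})$.  In the split case, $A\cong (\mathcal{O}_v/\mathfrak{p}_v^m)\times(\mathcal{O}_v/\mathfrak{p}_v^m)$, so $|A^\times|=(q_v^{m-1}(q_v-1))^2$ and the ratio is $q_v^{m-1}(q_v-1)=q_v^m(1-q_v^{-1})$.  Both outcomes are captured by $q_v^m\bigl(1-\bigl(\tfrac{E}{v}\bigr)q_v^{-1}\bigr)$.

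The main obstacle is simply getting Step~1 right; the only genuine subtlety is verifying that the maximal compact $H(\mathcal{O}_v)$ corresponds to $\mathcal{O}_{E_v}^\times$ in the split case, which requires observing that $x\pm y\sqrt{\rho}\in\mathcal{O}_v^\times$ is forced once their product is a unit and both components are integral.  After that identification, the index computation is the standard one for units of orders in a quadratic \'etale $F_v$-algebra.
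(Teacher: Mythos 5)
Your argument is correct, and it is a complete proof of a statement that the paper itself does not prove: the lemma is simply quoted from Furusawa~\cite{furusawa1993}, so there is no in-paper proof to compare against. Your route --- conjugating $b(x,y)$ by $\mathrm{diag}(\varpi_v^m,1)$ to see that $H^m(\mathcal{O}_v)$ is cut out by $x\in\mathcal{O}_v$, $y\in\mathfrak{p}_v^m$, $x^2-\rho y^2\in\mathcal{O}_v^\times$, identifying this with the unit group of the conductor-$\mathfrak{p}_v^m$ order $\mathcal{O}_v+\mathfrak{p}_v^m\mathcal{O}_{E_v}$, and then counting via $\mathcal{O}_{E_v}^\times/(1+\mathfrak{p}_v^m\mathcal{O}_{E_v})\cong(\mathcal{O}_{E_v}/\mathfrak{p}_v^m\mathcal{O}_{E_v})^\times$ --- is exactly the standard computation of unit indices of orders in a quadratic \'etale algebra, and is in substance how the index arises in Furusawa's paper as well. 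Two small remarks. First, you silently corrected a typo in the paper's definition of $H^m(\mathcal{O}_v)$ (the displayed formula has $\GL_2(F_v)$ where $\GL_2(\mathcal{O}_v)$ is clearly intended, as otherwise $H^m(\mathcal{O}_v)=H(F_v)$); your reading is the right one. Second, your identification of $H(\mathcal{O}_v)$ with $\mathcal{O}_{E_v}^\times$, equivalently the statement that $\mathcal{O}_v[\sqrt{\rho}]$ is the maximal order (and, in the split case, the "both eigenvalues are units iff $x,y\in\mathcal{O}_v$ and $x^2-\rho y^2\in\mathcal{O}_v^\times$" equivalence), uses that $2\in\mathcal{O}_v^\times$; for residual characteristic $2$ it can fail even with $\rho\in\mathcal{O}_v^\times$ and $E_v/F_v$ unramified, in which case $H(\mathcal{O}_v)$ is the unit group of a non-maximal order and the index in the lemma would change. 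This is harmless here, since the lemma is only invoked at the cofinite set of places with unramified data and the places above $2$ may be absorbed into the bad set $S$, but it is worth stating the hypothesis $v\nmid 2$ (or ``$\mathcal{O}_v[\sqrt{\rho}]$ maximal'') explicitly in your write-up.
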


Then the local integral $\eqref{11}$ is
\begin{IEEEeqnarray}{rCl}
 I_v(s) &=& (1+\frac{1}{q})  \sum \limits_{m,n \geq 0} \phi^{T, \nu} _v \left( m \left( \begin{bmatrix} \varpi^{n+m} & \\ & \varpi^n \end{bmatrix} \right) \right) \chi_T(\varpi_v^{m}) q_v^{2n(1-s)}q_v^{m(2-s)} \nonumber \\
&& -\ \frac{1}{q} \sum \limits_{n \geq 0} \phi^{T, \nu} _v \left( m \left( \begin{bmatrix} \varpi^{n} & \\ & \varpi^n \end{bmatrix} \right) \right) q_v^{2n(1-s)}. \label{16}
\end{IEEEeqnarray}

\subsection{The Split Case}

\begin{prop}
If $\left( \frac{E}{v} \right) = +1$, then a complete set of irredundant coset representatives for $N(F_v)\backslash G_1(F_v) / K_1$ are given by 
\begin{equation*}
m\left( h \, \Pi_i^k \begin{bmatrix} \varpi^{m+n} & \\ & \varpi^{n} \end{bmatrix} \right)
\end{equation*}
where $i=1,2$, $m,k\geq 0$, $n \in \mathbb{Z}$, and $h$ are representatives for $H(\mathcal{O}_v) / H^m(\mathcal{O}_v)$.
\end{prop}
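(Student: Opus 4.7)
The plan is to mirror the inert case proof, adapting it to the split-case structure of $H(F_v)$. By the $\mathfrak{p}$-adic Iwasawa decomposition $G_1(F_v) = P_1(F_v) K_{1,v}$ together with $N(F_v) \backslash P_1(F_v) \cong M_1(F_v) \cong \GL_2(F_v)$, the problem reduces to finding coset representatives for $\GL_2(F_v) / \GL_2(\mathcal{O}_v)$, viewing $\GL_2(\mathcal{O}_v) = M_1(F_v) \cap K_{1,v}$.

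First I would invoke Furusawa's decomposition \eqref{decompF}, which reduces the problem to describing $H(F_v) / H^m(\mathcal{O}_v)$ for each $m \geq 0$: the right action of $\GL_2(\mathcal{O}_v)$ on $H(F_v) \cdot \text{diag}(\varpi^m,1) \cdot \GL_2(\mathcal{O}_v)$ quotients out exactly $H^m(\mathcal{O}_v)$ on the left, by the very definition of $H^m(\mathcal{O}_v)$. Since $H^m(\mathcal{O}_v) \subseteq H(\mathcal{O}_v)$, it suffices to describe $H(F_v) / (Z(F_v) H(\mathcal{O}_v))$ first, then unfold through $Z(F_v) / Z(\mathcal{O}_v) \cong \mathbb{Z}$ using the scalars $\text{diag}(\varpi^n,\varpi^n)$, and finally through $H(\mathcal{O}_v) / H^m(\mathcal{O}_v)$ via representatives $h$.

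In the split case, the isomorphism $\iota: H(F_v) \xrightarrow{\sim} F_v^\times \oplus F_v^\times$ sends $Z(F_v) H(\mathcal{O}_v)$ to $\Delta F_v^\times \cdot (\mathcal{O}_v^\times \oplus \mathcal{O}_v^\times)$, where $\Delta$ is the diagonal embedding. The quotient $(F_v^\times \oplus F_v^\times) / \bigl(\Delta F_v^\times \cdot (\mathcal{O}_v^\times \oplus \mathcal{O}_v^\times)\bigr)$ is infinite cyclic, generated by $\iota(\Pi_1) = (\varpi, 1)$. Since $\Pi_1 \Pi_2 = \text{diag}(\varpi,\varpi) \in Z(F_v)$, we have $\Pi_2 \equiv \Pi_1^{-1}$ modulo $Z(F_v) H(\mathcal{O}_v)$, so the positive powers $\{\Pi_1^k\}_{k \geq 0}$ together with $\{\Pi_2^k\}_{k \geq 1}$ form a complete irredundant set of representatives for $H(F_v)/(Z(F_v)H(\mathcal{O}_v))$; this is what the statement records under the shorthand $i = 1,2$, $k \geq 0$, with the harmless convention that $\Pi_1^0 = \Pi_2^0$ is counted once. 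Assembling these pieces and multiplying on the right by $\text{diag}(\varpi^m,1)$, then passing through $m: \GL_2 \hookrightarrow M_1$, yields the claimed representatives
\begin{equation*}
m\left( h \, \Pi_i^k \begin{bmatrix} \varpi^{m+n} & \\ & \varpi^n \end{bmatrix}\right)
\end{equation*}
for $N(F_v) \backslash G_1(F_v) / K_{1,v}$.

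The main obstacle is bookkeeping to ensure irredundancy: I must confirm that distinct tuples $(m,n,i,k,h)$ yield genuinely distinct double cosets. This decomposes into checking (i) that different $m$'s lie in different Furusawa strata (given by \eqref{decompF}), (ii) that within a fixed $m$ the map $H(\mathcal{O}_v)/H^m(\mathcal{O}_v) \to H(F_v) \cdot \text{diag}(\varpi^m,1) \GL_2(\mathcal{O}_v) / \GL_2(\mathcal{O}_v)$ is injective (which is a direct consequence of the definition of $H^m(\mathcal{O}_v)$), and (iii) that the $\Pi_i^k \text{diag}(\varpi^n,\varpi^n)$ are pairwise inequivalent in $H(F_v)/H(\mathcal{O}_v)$ in the stated range, which follows from the infinite cyclicity computed via $\iota$ and the relation $\Pi_1 \Pi_2 \in Z(F_v)$. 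Everything else is a transcription of the inert argument.
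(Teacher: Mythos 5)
Your proposal is correct and follows essentially the same route as the paper: reduce via the Iwasawa decomposition to $\GL_2(F_v)$, apply Furusawa's decomposition \eqref{decompF}, and then parametrize $H(F_v)\cong(F_v\oplus F_v)^\times$ modulo units and the center through the valuations, which is exactly the paper's choice of $i$, $k$, $n$ from $(k_1,k_2)$. Your extra bookkeeping (the identification of the $m$-stratum cosets with $H(F_v)/H^m(\mathcal{O}_v)$ and the irredundancy checks, including the $k=0$ convention for $\Pi_1^0=\Pi_2^0$) simply makes explicit what the paper leaves implicit.
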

\begin{proof}
For every $(x,y) \in (F_v \oplus F_v)^\times$ there are unique integers $k_1$, and $k_2$ such that $(x,y)=(\varpi^{k_1}, \varpi^{k_2}) \cdot u$ where $u \in \mathcal{O}_v^\times \oplus \mathcal{O}_v^\times$. If $k_1 > k_2$, then let $i=1$. Otherwise $i=2$. Let $n=\min\{k_1, k_2\}$, and $k=k_i-n$. The result then follows from the decomposition \eqref{decompF}.
\end{proof}

\begin{prop} The various components of the integrand are computed as follows
\begin{align*}
&\delta_P ^{\frac{s}{3}+\frac{1}{3}} \left( m \left( h \, \Pi_i^k \begin{bmatrix} \varpi_v^{n+m} & \\ & \varpi^n \end{bmatrix} \right) \right)=q_v^{-(2n+m+k)(s+1)}\\
&\omega_v \left( m \left(h \, \Pi_i^k \begin{bmatrix} \varpi_v^{n+m} &  \\ & \varpi_v^n\\ \end{bmatrix} \right), 1 \right) \varphi_v^\circ(1_2)=\chi_{T,v}(\varpi_v^{2n+m+k}) q_v^{-(2n+m)} \\
&\text{vol}\left(N(F_v) \backslash N(F_v) \,  m\left( h \, \Pi^k_i\begin{bmatrix} \varpi^{n+m} & \\ & \varpi^n \end{bmatrix} \right) \, K_{1,v}\right)=q_v^{6n+3m+3k}
\end{align*}
\end{prop}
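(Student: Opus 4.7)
The proposition mirrors its inert-case counterpart and is established by the same three elementary calculations: the explicit form of $\delta_P$ on the Levi, the Schr\"odinger model action of the Weil representation on $\varphi_v^\circ$, and the relation between the modulus character and right-invariant Haar measure on $N \backslash G_1$. The genuinely new ingredient compared to the inert case is the contribution of $\Pi_i^k$. From the definitions $\det \Pi_i \in \mathfrak{p}$ and $\Pi_1 \Pi_2 = \mathrm{diag}(\varpi, \varpi)$ one extracts $|\det \Pi_i|_v = q_v^{-1}$, so $|\det \Pi_i^k|_v = q_v^{-k}$. I will also use that $h \in H(\mathcal{O}_v)$ contributes only a unit determinantal factor, and that $\chi_{T,v}$ is unramified by condition 6 of Definition~\ref{unramifieddef} (and in fact trivial at split places, where $\rho$ is a square in $F_v$).

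For the first identity, use $\delta_P(m(g)) = |\det g|_v^3$ from Section~\ref{orthog} (with $\lambda = 1$, since $m(g) \in M_1$). A direct computation of $\det(h \Pi_i^k\,\mathrm{diag}(\varpi^{n+m}, \varpi^n))$ combined with the bookkeeping of the previous paragraph yields the stated power of $q_v$ after raising to the exponent $s+1$.

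For the second identity, apply $\omega_v(m(a), 1)\varphi(x) = \chi_{T,v}(\det a)\, |\det a|_v\, \varphi(xa)$ from Section~\ref{schrodinger} with $x = 1_2$ and $a = h \Pi_i^k\,\mathrm{diag}(\varpi^{n+m}, \varpi^n)$. Each factor of $a$ has entries in $\mathcal{O}_v$ when $n, m, k \geq 0$ (in the split coordinates used, $\Pi_i$ is a diagonal matrix with a unit and a power of $\varpi$), so $a \in \mathrm{Mat}_2(\mathcal{O}_v)$ and $\varphi_v^\circ(a) = 1$; the character evaluation reduces to $\chi_{T,v}(\varpi^{2n+m+k})$ by unramifiedness, and the remaining $q_v$-factor is immediate from the determinant. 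For the third identity, recall from the inert-case proof that $\delta_P^{-1} \cdot d\bar{g}$ is the right-invariant measure on $N(F_v)\backslash G_1(F_v)$ normalized so that $N(F_v)\backslash N(F_v) K_{1,v}$ has volume one; hence the volume of the double coset $N(F_v)\backslash N(F_v)\,m(a)\,K_{1,v}$ equals $\delta_P^{-1}(m(a)) = |\det a|_v^{-3} = q_v^{3(2n+m+k)}$.

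There is no serious obstacle—the proof is purely mechanical unwinding of definitions. The only point requiring attention is the extra factor of $|\det \Pi_i|_v = q_v^{-1}$ per power of $\Pi_i$ appearing in the coset representative, which is precisely what distinguishes the split-case bookkeeping from its inert-case analogue; in particular, each identity is independent of the choice of representative $h \in H(\mathcal{O}_v)/H^m(\mathcal{O}_v)$, which is what will allow the subsequent summation over such $h$ to factor out cleanly as an index.
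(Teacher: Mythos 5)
Your approach is the same as the paper's: the first two identities are the same mechanical unwindings as in the inert case, and the volume is $\delta_{P,v}^{-1}$ of the representative (the paper itself only comments on the volume, deferring to Furusawa's Lemma 3.5.3). Your computations of the first and third identities are correct, as is the observation that everything depends on $h$ only through $|\det h|_v=1$.

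However, you should not pass over the second identity with ``the remaining $q_v$-factor is immediate from the determinant.'' Carried out, your own reasoning gives $|\det(h\,\Pi_i^k\,\mathrm{diag}(\varpi_v^{n+m},\varpi_v^n))|_v=q_v^{-(2n+m+k)}$, so the Schr\"odinger-model formula yields $\chi_{T,v}(\varpi_v^{2n+m+k})\,q_v^{-(2n+m+k)}$, whereas the proposition as printed asserts the exponent $-(2n+m)$. The two disagree whenever $k\geq 1$, so as written your argument does not prove the displayed statement. In fact the exponent $-(2n+m+k)$ is the one consistent with the rest of the paper: combining it with $f_v=q_v^{-(2n+m+k)(s+1)}$, the volume $q_v^{3(2n+m+k)}$, and the index $[H(\mathcal{O}_v):H^m(\mathcal{O}_v)]=q_v^m(1-q_v^{-1})$ produces exactly the exponents $q_v^{(2n+k)(1-s)}q_v^{m(2-s)}$ appearing in \eqref{16a}, while the printed $q_v^{-(2n+m)}$ would give $q_v^{k(2-s)}$ instead of $q_v^{k(1-s)}$. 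So the displayed second identity contains a typo; your proof should state the corrected value explicitly and note that it is what feeds into the subsequent summation.
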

\begin{proof}
The only nontrivial part is volume computation which follows from an argument that is similar to the proof of ~\cite{furusawa1993}*{Lemma 3.5.3}. 
\end{proof}

Then the local integral $\eqref{11}$ is
\begin{align}
 I_v(s) = (1-\frac{1}{q}) & \sum \limits_{i=1,2} \sum \limits_{m,n \geq 0} \phi^{T, \nu} _v \left( m \left( \Pi_i^k \begin{bmatrix} \varpi^{n+m} & \\ & \varpi^n \end{bmatrix} \right) \right)  q_v^{(2n+k)(1-s)}q_v^{m(2-s)} \nonumber \\
+ \frac{1}{q} & \sum \limits_{i=1,2} \sum \limits_{n \geq 0} \phi^{T, \nu} _v \left( m \left(\Pi_i^k \begin{bmatrix} \varpi^{n} & \\ & \varpi^n \end{bmatrix} \right) \right) q_v^{(2n+k)(1-s)}. \label{16a}
\end{align}

The expressions \eqref{16} and \eqref{16a} are evaluated using Sugano's Formula.

\section{Sugano's Formula}
The results of this section were obtained by Sugano~\cite{sugano1985}, but I follow the treatment found in Furusawa~\cite{furusawa1993}.

Define 
\begin{align*}
 h_v(\ell,m)=\begin{bmatrix} \varpi_v^{2m+\ell} & & & \\ & \varpi_v^{m+\ell} & & \\ & & 1 & \\ & & & \varpi_v^{m} \end{bmatrix}.
\end{align*}
The local spherical Bessel function is supported on double cosets
\begin{equation*}
\coprod \limits_{\ell,m \geq 0} R(F_v) h_v(\ell,m) GSp_4(\mathcal{O}_v).
\end{equation*}
In~\cite{sugano1985} Sugano explicitly computes the following expression when $\phi^{T,\nu}$ is spherical:
\begin{align*}
C_v(x,y)=\sum \limits_{\ell,m \geq 0} \phi_v^{T,\nu}(h_v(\ell, m)) x^m y^\ell.
\end{align*}
Since $\pi_v$ is assumed to be a spherical representation, it is isomorphic to an unramified principal series representation. I describe this more precisely.

Let $P_0$ be the standard Borel subgroup of $G$ with Levi component $M_0$.
\begin{align*}
M_0=\left\{ \begin{bmatrix} a_1 & & & \\ & a_2 & & \\ & & a_3 & \\ & & & a_4 \end{bmatrix} \Bigg| a_1 a_3 = a_2 a_4 \right\}.
\end{align*}
There exists a character
\begin{equation*}
\gamma_v : M_0(F_v) \rightarrow \mathbb{C}^\times
\end{equation*}
that is trivial on $M_0(\mathcal{O}_v)$ such that $\pi_v \cong \mathrm{Ind}^{G(F_v)}_{P_0(F_v)}(\gamma_v)$. Then $\gamma_v$ is determined by its values
\begin{align*}
\gamma_{1,v}=\gamma_v \begin{bmatrix} \varpi_v & & & \\ & \varpi_v & & \\ & & 1 &\\ & & & 1 \end{bmatrix}, & \quad
\gamma_{2,v}=\gamma_v \begin{bmatrix} \varpi_v & & & \\ & 1 & & \\ & & 1 &\\ & & & \varpi_v \end{bmatrix},\\
\gamma_{3,v}=\gamma_v \begin{bmatrix} 1 & & & \\ & 1 & & \\ & & \varpi_v &\\ & & & \varpi_v \end{bmatrix}, & \quad
\gamma_{4,v}=\gamma_v \begin{bmatrix} 1 & & & \\ & \varpi_v & & \\ & & \varpi_v &\\ & & & 1 \end{bmatrix}.\\
\end{align*}
Note that
\begin{equation*}
\gamma_{1,v} \gamma_{3,v} = \gamma_{2,v} \gamma_{4,v}=\omega_{\pi,v}(\varpi_v).
\end{equation*}

Let
\begin{equation*}
\epsilon_v=      \left\{
\begin{array}{ll}
 0 & \text{if } \left( \frac{E}{v} \right)=-1,\\
\nu(\varpi_{E,v}) & \text{if } \left( \frac{E}{v} \right)=0,\\
 \nu(\varpi_{E,v})+\nu(\varpi_{E,v}^\sigma) \qquad & \text{if } \left( \frac{E}{v} \right)=1.
\end{array} \right.
\end{equation*}
\begin{theorem}(Sugano)
\begin{equation*}
C_v(x,y)=\frac{H_v(x,y)}{P_v(x)Q_v(y)},
\end{equation*}
where
\begin{IEEEeqnarray*}{rCl}
P_v(x)&=& (1-\gamma_{1,v} \gamma_{2,v} q_v^{-2}x)(1-\gamma_{1,v} \gamma_{4,v} q_v^{-2}x) \nonumber\\ && (1-\gamma_{2,v} \gamma_{3,v} q_v^{-2}x)(1-\gamma_{3,v} \gamma_{4,v} q_v^{-2}x),\\
Q_v(y) &=& \prod \limits_{i=1}^4 (1-\gamma_{i,v} q_v^{-3/2}y),\\
H_v(x,y) &=& (1+A_2 A_3 x y^2)\{M_1(x)(1+A_2 x)+A_2 A_5 A_1^{-1} \alpha x^2 \}\nonumber \\ &&-A_2 x y \{\alpha M_1(x) -A_5 M_2(x) \} -A_5 P_v(x)y -A_2 A_4 P_v(x) y^2,\\
M_1(x) &=& 1 -A_1^{-1}(A_1+A_4)^{-1}(A_1 A_5 \alpha + A_4 \beta -A_1 A_5^2 -2 A_1 A_2 A_4)x \nonumber \\ &&+A_1^{-1} A_2^2 A_4 x^2,\\
M_2(x) &=& 1+A_1^{-1}(A_1 A_2 -\beta)x + A_1^{-1}A_2 (A_1 A_2 -\beta)x^2+A_2^3x^3,
\end{IEEEeqnarray*}
\begin{align*}
\alpha=&q_v^{-3/2}\sum \limits_{i=1}^4 \gamma_{i,v},&
 \quad \beta=&q_v^{-3}\sum \limits_{1 \leq i < j \leq 4} \gamma_{i,v} \gamma_{j,v},\\
A_1=&q_v^{-1},&  \quad A_2=&q_v^{-2} \nu(\varpi_v),\\
\quad A_3=&q_v^{-3} \nu(\varpi_v),& \quad A_4=&-q_v^{-2}\left( \frac{E}{v} \right),\\
A_5=&q_v^{-2} \epsilon_v.
\end{align*}

\end{theorem}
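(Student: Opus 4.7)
The plan is to use the Hecke algebra to derive a system of linear recurrences for the values $c_{\ell,m} := \phi_v^{T,\nu}(h_v(\ell,m))$, then convert these recurrences into algebraic identities satisfied by the generating function $C_v(x,y)$, and finally verify that the proposed rational function is the unique solution with the correct initial data.

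First I would establish a Cartan-type decomposition relative to the Bessel subgroup:
\begin{equation*}
G(F_v) \;=\; \coprod_{\ell,m \geq 0} R(F_v)\, h_v(\ell,m)\, K_v.
\end{equation*}
Combined with the transformation properties of $\phi_v^{T,\nu}$ (the $\nu \otimes \psi_T$-equivariance on the left under $R(F_v)$ and right $K_v$-invariance, plus the volume computations already performed in Section~\ref{unramifiedchapter} to control overlaps), this shows that $\phi_v^{T,\nu}$ is entirely determined by the family $\{c_{\ell,m}\}$. The analysis needs to be carried out uniformly in the inert and split cases, with the extension $E_v/F_v$ entering through $(E/v)$ and through $\epsilon_v$.

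Second, I would exploit that $\pi_v \cong \mathrm{Ind}_{P_0(F_v)}^{G(F_v)}(\gamma_v)$ with Satake parameters $\gamma_{1,v},\dots,\gamma_{4,v}$. By Satake, the spherical Hecke algebra $\mathcal{H}(G(F_v),K_v)$ is a commutative polynomial ring, and it acts on $\phi_v^{T,\nu}$ by the same scalars by which it acts on the spherical vector in $\pi_v$, namely symmetric functions in the $\gamma_{i,v}$. Taking two natural Hecke generators (for example, the characteristic functions of $K_v\,\mathrm{diag}(\varpi,\varpi,1,1)\,K_v$ and $K_v\,\mathrm{diag}(\varpi,1,1,\varpi^{-1})\,K_v\cdot Z$), I would decompose each translate $h_v(\ell,m)\cdot g_i$ via the Bessel--Cartan decomposition above. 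Each equation $(T_j \cdot \phi_v^{T,\nu})(h_v(\ell,m)) = \lambda_j \, \phi_v^{T,\nu}(h_v(\ell,m))$ then yields a linear recurrence among the $c_{\ell,m}$, with coefficients polynomial in the $\gamma_{i,v}$, $\nu(\varpi_v)$, $\epsilon_v$, and $(E/v)$. These are precisely the ingredients of the quantities $\alpha, \beta, A_1,\dots,A_5$ appearing in Sugano's formula.

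Third, I would translate the recurrences into functional equations for $C_v(x,y)$. Each recurrence takes the shape $F(x,y)\,C_v(x,y) = G(x,y)$, where $G(x,y)$ captures boundary terms coming from cosets where some index would become negative. The two recurrences together should pin down $C_v(x,y)$ as a rational function with denominator $P_v(x)Q_v(y)$: the factor $Q_v(y)$ comes from the recurrence in $\ell$ (quartic, reflecting the four Satake parameters), and $P_v(x)$ comes from the recurrence in $m$ (reflecting the pairwise products that arise from the ``square'' directions $h_v(\ell,m+1)$). Solving the linear system, and computing $c_{0,0},\, c_{0,1},\, c_{1,0}$ directly as initial conditions, determines the numerator $H_v(x,y)$.

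The principal obstacle is computing the coset decomposition of $h_v(\ell,m)\cdot g_i$ in the Bessel--Cartan sense; the Bessel subgroup $R$ is not a unipotent radical, so the combinatorics is more intricate than in the Casselman--Shalika situation, and extra ``low'' terms proliferate when $\ell$ or $m$ is small. Handling these boundary terms uniformly across inert and split primes (with $\epsilon_v$ doing the bookkeeping) and then verifying that the specific numerator $H_v(x,y)$ matches the boundary data will be the most delicate step; the clean quartic denominator $P_v(x)Q_v(y)$ is conceptually forced by the Hecke eigenvalues, but extracting the precise shape of $H_v$ requires careful algebra.
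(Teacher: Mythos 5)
There is an important mismatch of expectations here: the paper does not prove this statement at all. It is Sugano's theorem, quoted verbatim (in Furusawa's normalization) from \cite{sugano1985} and \cite{furusawa1993}; the section begins by saying exactly that, and the formula is then simply used as an input to evaluate the unramified integrals. So there is no internal proof to compare your argument against, and any assessment has to be of your outline on its own terms.

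On those terms, your strategy is the right one and is essentially the route by which such formulas are established (Hecke eigenvalue recurrences for the values $c_{\ell,m}=\phi_v^{T,\nu}(h_v(\ell,m))$, converted into functional equations for the generating series and solved against initial data); the decomposition $G(F_v)=\coprod_{\ell,m\geq 0}R(F_v)h_v(\ell,m)K_v$ you start from is also correct. But as written the proposal is a program, not a proof: the entire content of the theorem lies in the steps you defer. Concretely, you never carry out the left-coset decompositions of the two Hecke operators and their interaction with the $R$-equivariance, so you never actually obtain the recurrences; you never analyze the boundary terms at $\ell=0$ or $m=0$, which is precisely where $\epsilon_v$, $\nu(\varpi_v)$ and $\left(\frac{E}{v}\right)$ (i.e.\ $A_2,\dots,A_5$) enter and where the inert and split cases genuinely differ; and consequently nothing in the outline certifies the specific numerator $H_v(x,y)$ with its auxiliary polynomials $M_1$, $M_2$ (note the delicate combination $A_1A_5\alpha+A_4\beta-A_1A_5^2-2A_1A_2A_4$), nor even rules out extra or missing factors in $P_v(x)Q_v(y)$, since the claim that the denominator is ``conceptually forced'' is only heuristic until the recurrences are exhibited. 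If you want a proof rather than a plan, you must either perform these computations (in effect reproducing \cite{sugano1985}) or, as the paper does, cite Sugano and Furusawa for the result.
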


The parameters $\gamma_{i,v}$ differ from the parameters of Section \ref{lfunctionsec}. One verifies that
\begin{align*}
\gamma_{1,v}=\chi_1 \chi_2 \chi_0(\varpi_v), & &
\gamma_{2,v}=\chi_1 \chi_0(\varpi_v),\\
\gamma_{3,v}=\chi_0(\varpi_v), & &
\gamma_{4,v}=\chi_2 \chi_0(\varpi_v),
\end{align*}
and
\begin{equation*}
 \omega_{\pi,v} = \chi_1 \chi_2 \chi_0^2.
\end{equation*}
Therefore,
\begin{align}
 \gamma_{1,v} \gamma_{2,v} = \chi_1^2 \chi_2 \chi_0^2 (\varpi_v)= \chi_1 \omega_{\pi,v}(\varpi_v), \nonumber\\
 \gamma_{1,v} \gamma_{4,v} = \chi_1 \chi_2^2 \chi_0^2(\varpi_v) = \chi_1 \omega_{\pi,v}(\varpi_v), \nonumber\\
 \gamma_{2,v} \gamma_{3,v} = \chi_1  \chi_0^2(\varpi_v) = \chi_2^{-1} \omega_{\pi,v}(\varpi_v), \nonumber\\
 \gamma_{3,v} \gamma_{4,v} = \chi_2  \chi_0^2(\varpi_v) = \chi_1^{-1} \omega_{\pi,v}(\varpi_v).\label{blah124}
\end{align}

\begin{prop}\label{absconv1}
Let $P_1$, $P_2 \in X \cdot \mathbb{C}[X]$, i.e. $P_i$ have constant coefficient $0$, then $C_v( P_1(q^{-s}) ,P_2( q^{-s}))$ converges absolutely for Re$(s) \gg 0$. Therefore, the terms of this series may be rearranged without affecting the sum.
\end{prop}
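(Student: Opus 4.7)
The plan is to use Sugano's formula directly: since $C_v(x,y)$ is a rational function
\[
C_v(x,y)=\frac{H_v(x,y)}{P_v(x)\,Q_v(y)},
\]
whose denominator factors into linear terms $(1-a_i x)$ and $(1-b_j y)$ for certain nonzero constants $a_i,b_j\in\mathbb{C}$ coming from the Satake parameters $\gamma_{i,v}$ and the data $\nu(\varpi_v)$, $\epsilon_v$, the identity between the series $\sum_{\ell,m\ge 0}\phi_v^{T,\nu}(h_v(\ell,m))\,x^m y^\ell$ and the rational function $H_v/(P_vQ_v)$ is an identity of formal power series around $(x,y)=(0,0)$ whose common radius of convergence, bi-disc-wise, is controlled by the poles of the denominator.

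First I would set $r_0:=\min\bigl(\min_i|a_i|^{-1},\ \min_j|b_j|^{-1}\bigr)>0$ and observe that $1/(P_v(x)Q_v(y))$ expands as an absolutely convergent bi-geometric series on the polydisc $\{|x|<r_0,\ |y|<r_0\}$, hence so does the product $H_v(x,y)/(P_v(x)Q_v(y))$ since $H_v$ is a polynomial. By uniqueness of the Taylor expansion and Sugano's identity, the coefficients of this bi-geometric series are exactly $\phi_v^{T,\nu}(h_v(\ell,m))$, so the series $\sum_{\ell,m}|\phi_v^{T,\nu}(h_v(\ell,m))|\,|x|^m|y|^\ell$ is finite on that polydisc.

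Next I would use the hypothesis that $P_1,P_2\in X\cdot\mathbb{C}[X]$ have no constant term. Writing $P_i(X)=X\widetilde{P_i}(X)$, one has $|P_i(q_v^{-s})|\le q_v^{-\operatorname{Re}(s)}\cdot\sup_{|z|\le 1}|\widetilde{P_i}(z)|$ for $\operatorname{Re}(s)\ge 0$, so $|P_i(q_v^{-s})|\to 0$ as $\operatorname{Re}(s)\to\infty$. In particular, for $\operatorname{Re}(s)$ sufficiently large (depending only on the $a_i,b_j$ and on $P_1,P_2$), both $|P_1(q_v^{-s})|<r_0$ and $|P_2(q_v^{-s})|<r_0$, placing the substituted point inside the polydisc of absolute convergence.

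There is no real obstacle here; the only thing to be careful about is keeping the logic straight: Sugano's theorem gives the identity $C_v(x,y)=H_v(x,y)/(P_v(x)Q_v(y))$ a priori only as a relation between the formal power series (the left side) and the rational function (the right side), but that already forces the power series to equal the Taylor expansion of the rational function, which in turn converges absolutely on the distinguished polydisc described above. The rearrangement statement is then a standard consequence of absolute convergence of a double series.
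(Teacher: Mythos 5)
Your proof is correct, but it takes a genuinely different route from the paper. The paper does not invoke Sugano's formula at all to get convergence: it proves a local bound $|\phi_v^{T,\nu}(g_v)| \leq A_v |\lambda_G(g_v)|_v^{\alpha}$ by descending from the global boundedness of the Bessel coefficient $\phi^{T,\nu}$ on $G_1(\mathbb{A})$ (Corollary \ref{boundedlemma}, itself a byproduct of the absolute-convergence argument for the unfolded global integral), and then compares $C_v(P_1(q^{-s}),P_2(q^{-s}))$ with a doubly geometric series. Your argument instead reads off the (at most polynomially-times-geometric) growth of the coefficients $\phi_v^{T,\nu}(h_v(\ell,m))$ from the rationality of the generating function in Sugano's theorem. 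Both work; the trade-off is this. Your route is purely local and yields an explicit polydisc of convergence in terms of the Satake parameters $\gamma_{i,v}$, $\nu(\varpi_v)$, $\epsilon_v$, but it leans on the fact that Sugano's identity is established coefficient-wise (via Hecke-operator recursions), i.e.\ as a formal power series identity --- a point you correctly flag and which is indeed how the formula is proved, so there is no circularity. The paper's route is a priori: it needs no information about how the formula was derived, gives a bound valid for all $g_v$ rather than just the support representatives, and produces constants $A_v$ with an exponent $\alpha$ independent of $v$, which is the form of estimate one wants when controlling the full Euler product rather than a single local factor. One small imprecision in your write-up, harmless here: the natural domain is the product polydisc $\{|x|<\min_i|a_i|^{-1}\}\times\{|y|<\min_j|b_j|^{-1}\}$, of which your balanced polydisc of radius $r_0$ is merely a subset; since you only need some neighborhood of the origin, this costs nothing.
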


The proposition is a consequence of the following lemma which is the local version of Corollary \ref{boundedlemma}.
\begin{lem}
 For each place $v$ of $F$, there is a constant $A_v>0$ and a real number $\alpha$ independent of $v$ so that
\begin{equation*}
 | \phi_v^{T,\nu}(g_v)| \leq A_v | \lambda_G(g_v)|_v ^\alpha.
\end{equation*}
\end{lem}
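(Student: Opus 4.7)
The strategy is to extract the local bound from the global boundedness $|\phi^{T,\nu}(g_1)|\leq C$ on $G_1(\mathbb{A})$ (Corollary~\ref{boundedlemma}), together with the factorization of the Bessel coefficient through the local Bessel models and a reduction to bounded similitude via the center. Assuming $\phi=\otimes_u\phi_u$ is a pure tensor, uniqueness of the local Bessel model at every non-archimedean place (and under the standing hypothesis on the archimedean models employed here) gives a factorization $\phi^{T,\nu}(g)=c\prod_u\phi_u^{T,\nu}(g_u)$ for some fixed nonzero constant $c$. For each $u\ne v$ I fix once and for all an element $g_u^{(0)}\in G(F_u)$ with $\phi_u^{T,\nu}(g_u^{(0)})\ne 0$, equal to the identity at almost all $u$.

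The uniform exponent $\alpha$ comes from the central character. Since $Z\subset H_T\subset R$, one has $\phi_v^{T,\nu}(zg_v)=\nu_v(z)\phi_v^{T,\nu}(g_v)$ for $z\in Z(F_v)$, and since $\nu|_{Z(\mathbb{A})}=\omega_\pi^{-1}$ is a Hecke quasi-character of $F^\times\backslash\mathbb{A}^\times$, there is a real number $s_0$ depending only on $\pi$ (hence independent of $v$) with $|\nu_v(z)|=|z|_v^{-s_0}$ at every $v$. For $g_v\in G(F_v)$, pick $z_v\in Z(F_v)$ so that $g_v':=z_v^{-1}g_v$ satisfies $|\lambda_G(g_v')|_v\in B$, where $B$ is a fixed bounded set of representatives (a finite set in the non-archimedean case, a compact interval in the archimedean case). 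Since $|z_v|_v^{2}\asymp|\lambda_G(g_v)|_v$, we obtain
\[
|\phi_v^{T,\nu}(g_v)|=|z_v|_v^{-s_0}\,|\phi_v^{T,\nu}(g_v')|\leq A_v'\,|\lambda_G(g_v)|_v^{-s_0/2}\,|\phi_v^{T,\nu}(g_v')|,
\]
so it suffices to bound $\phi_v^{T,\nu}$ uniformly on the reduced set $\{g_v':|\lambda_G(g_v')|_v\in B\}$.

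To do so, for each $g_v'$ with $|\lambda_G(g_v')|_v\in B$ I would construct a compensating global element $\widetilde g\in G_1(\mathbb{A})$ with $\widetilde g_v=g_v'$, whose other components are drawn from a finite list depending only on $B$ and the fixed $g_u^{(0)}$. Existence uses surjectivity of $\lambda_G:G(\mathbb{A})^+\to\mathbb{A}^{\times,+}$ together with strong approximation for the simply connected group $\Sp_4$; the finiteness/compactness of $B$ keeps the auxiliary components in a fixed compact set, so that $\prod_{u\ne v}|\phi_u^{T,\nu}(\widetilde g_u)|$ is bounded below by a positive constant independent of $g_v'$. Applying Corollary~\ref{boundedlemma} to $\widetilde g$ and dividing gives $|\phi_v^{T,\nu}(g_v')|\leq A_v$ uniformly, proving the lemma with $\alpha=-s_0/2$.

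The main obstacle is the last construction: ensuring that the auxiliary components at $u\ne v$ can be selected so that $\prod_{u\ne v}|\phi_u^{T,\nu}(\widetilde g_u)|$ remains uniformly bounded away from zero as $g_v'$ varies over the reduced region. This combines strong approximation for $\Sp_4$ with the discreteness/compactness of $B$. The fact that $\alpha$ is independent of $v$ is ultimately a reflection of $\omega_\pi$ being a global Hecke quasi-character, whose real exponent $s_0$ is an invariant of $\pi$.
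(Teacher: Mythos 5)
Your first step---extracting a $v$-independent exponent from the central character, using that $\nu|_{Z_\mathbb{A}}=\omega_\pi^{-1}$ is a Hecke quasi-character---is sound, and it is also where the paper's $\alpha$ comes from. The gap is in your second step. An element $\widetilde g\in G_1(\mathbb{A})=\Sp_4(\mathbb{A})$ with $\widetilde g_v=g_v'$ exists only if $\lambda_G(g_v')=1$, and dividing $g_v$ by a central element only alters $\lambda_G(g_v)$ by squares of $F_v^\times$ (at an archimedean place, by positive reals), so after your reduction $\lambda_G(g_v')$ can still be a non-square unit, a uniformizer, or a negative real number; strong approximation for $\Sp_4$ is irrelevant to this obstruction, which lives entirely in the similitude character. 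If you retreat to $G(\mathbb{A})^1$ instead, you must compensate $|\lambda_G(g_v')|_v$ at auxiliary places, which moves the components $\widetilde g_u$ away from your fixed $g_u^{(0)}$, and then the uniform positive lower bound on $\prod_{u\ne v}|\phi_u^{T,\nu}(\widetilde g_u)|$---which you yourself flag as the main obstacle---is exactly what is not proved: a local Bessel function can vanish at points of the compact set your construction forces the auxiliary components into. (Note also that Corollary~\ref{boundedlemma} is stated on $G_1(\mathbb{A})$, so even a $G(\mathbb{A})^1$ variant needs a word of justification.)

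The paper's route avoids all of this. It first upgrades the boundedness to a global estimate valid on all of $G(\mathbb{A})$: write $g=z_w g_1$ with $z_w$ central at one fixed archimedean place $w$ and $g_1\in G(\mathbb{A})^1$ (possible since $|z_w|_w^2$ can be any positive real), so that $|\phi^{T,\nu}(g)|=|\omega_{\pi,w}(z_w)|_w\,|\phi^{T,\nu}(g_1)|\le A\,|z_w|_w^{\beta}=A\,|\lambda_G(g)|_{\mathbb{A}}^{\beta/2}$. Then, fixing $g_0$ with $\phi^{T,\nu}(g_0)\neq 0$ and evaluating this inequality at the adele whose component at $v$ is the arbitrary $g_v$ and whose component at each $u\ne v$ is $g_{0,u}$, the factorization you already invoked lets you divide by the fixed nonzero constant $\prod_{u\ne v}|\phi_u^{T,\nu}(g_{0,u})|$ and read off $|\phi_v^{T,\nu}(g_v)|\le A_v\,|\lambda_G(g_v)|_v^{\beta/2}$ with $\alpha=\beta/2$ independent of $v$. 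No matching of similitudes, no compactness argument, and no strong approximation are needed; replacing your construction of $\widetilde g$ by this global estimate is the way to repair the argument.
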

\begin{proof}
Again, I follow \cite{moriyama2004}.
Pick a place $w | \infty$. Then for $g \in G(\mathbb{A})$ one may write $g=z_w g_1$ where $g_1 \in G(\mathbb{A})^1$, and $z_w$ is in the center of $G(F_w)$.
Then by Corollary \ref{boundedlemma}, I estimate
\begin{align*}
 | \phi^{T,\nu} (g)|=& | \omega_{\pi,w}(z_w)|_w |\phi^{T,\nu}(g_1)|_\mathbb{A} \nonumber\\
=& A \cdot |z_w|_w ^\beta \nonumber \\
=& A \cdot |\lambda_G(g)|_\mathbb{A}^{\beta /2}.
\end{align*}
Let $g_0 \in G(\mathbb{A})$ so that $\pi^{T,\nu} \neq 0$. Then for each place $v$ define
\begin{equation*}
 A_v := A \times \prod \limits_{v^\prime \neq v} \frac{|\lambda_G(g_{0,v^\prime})|_{v^\prime} ^{\beta/2}}{|\phi^{T,\nu}(g_{0,v^\prime})|_\mathbb{A}},
\end{equation*}
and let $\alpha= \beta/2$.
\end{proof}

If $Re(s)$ is sufficiently large (to account for $A_v$ and the coefficients of $P_i$), then comparing the series $C_v(P_1(q^{-s}), P_2(q^{-s}))$ to a doubly geometric series completes the proof of the proposition.

\section{Computing the Local Integral}
\subsection{The Inert Case}
It is necessary to express $\eqref{16}$ as a linear combination of terms of the form $C_v(x,y)$ where $x$ and $y$ are monomials in $q_v^{-s}$. Since
\begin{align*}
m \begin{bmatrix} \varpi_v^{m+n} & \\ & \varpi_v^n \end{bmatrix} &= \begin{bmatrix} \varpi_v^{m+n} & & & \\ & \varpi_v^{n} & & \\ & & \varpi_v^{-n-m} & \\ & & & \varpi_v^{-n} \end{bmatrix} \nonumber \\
&=z(\varpi_v^{-n-m}) h_v(2n, m),
\end{align*}
then \eqref{16} gives
\begin{align*}
 I_v(s) = (1+\frac{1}{q}) & \sum \limits_{m,n \geq 0} \omega_{\pi,v}(\varpi)^{-m-n} \, \phi^{T, \nu} _v (h_v(m, 2n)) \chi_T(\varpi_v)^m q_v^{2n(1-s)}q_v^{m(2-s)} \nonumber \\
- \frac{1}{q} &\sum \limits_{n \geq 0}  \omega_{\pi,v}(\varpi)^{-n}  \, \phi^{T, \nu} _v (h_v(0, 2 n)) q_v^{2n(1-s)} 
\end{align*}
\begin{prop}
When $\left( \frac{E}{v}\right)=-1$ the unramified local integral is 
\begin{align*}
I_v(s)=& (1+\frac{1}{q_v})\sum \limits_{\ell, m \geq 0} (-\omega_{\pi,v}(\varpi_v)^{-1} q_v^{2-s})^m(\omega_{\pi,v}(\varpi_v)^{-1/2}q_v^{1-s})^{2 \ell} \phi^{T,\nu}_v(h_v(2\ell, m))\nonumber \\
&-\frac{1}{q_v}\sum \limits_{\ell \geq 0} (-\omega_{\pi, v}(\varpi_v)^{-1/2}q_v^{1-s})^{2 \ell} \phi^{T,\nu}_v (h_v(2 \ell, 0))\nonumber\\
=&(1+\frac{1}{q_v}) \Big[ \frac{1}{2} C_v\left( -\omega_{\pi,v}(\varpi_v)^{-1}q_v^{2-s}, \omega_{\pi, v}(\varpi_v)^{-1/2}q_v^{1-s} \right)\nonumber \\& \qquad +
\frac{1}{2} C_v\left( -\omega_{\pi,v}(\varpi_v)^{-1}q_v^{2-s}, -\omega_{\pi, v}(\varpi_v)^{-1/2}q_v^{1-s} \right) \Big] \nonumber \\& -
\frac{1}{q_v} \Big[ \frac{1}{2}  C_v\left( 0, \omega_{\pi, v}(\varpi_v)^{-1/2}q_v^{1-s} \right)+ \frac{1}{2} C_v\left( 0, -\omega_{\pi, v}(\varpi_v)^{-1/2}q_v^{1-s} \right) \Big].
\end{align*}
\end{prop}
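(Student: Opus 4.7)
The plan is to start from the explicit formula \eqref{16} and rewrite the summands so that they are indexed by the parameters $(\ell,m)$ on which Sugano's function $C_v(x,y)$ is built, then package the resulting generating series as values of $C_v$.

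First, I would use the identity
\begin{equation*}
m\begin{bmatrix}\varpi_v^{m+n} & \\ & \varpi_v^n\end{bmatrix} = z(\varpi_v^{-m-n})\, h_v(2n,m),
\end{equation*}
which is a direct diagonal calculation, together with the fact that the spherical Bessel function transforms under the center via the central character of $\pi_v$. Because $\nu_{|Z_\mathbb{A}}=\omega_\pi^{-1}$, this yields
$\phi_v^{T,\nu}(m(\mathrm{diag}(\varpi_v^{m+n},\varpi_v^n))) = \omega_{\pi,v}(\varpi_v)^{-(m+n)}\phi_v^{T,\nu}(h_v(2n,m))$. Next, since $E_v/F_v$ is the unramified quadratic extension in the inert case, $\chi_{T,v}(\varpi_v)=-1$, so $\chi_{T,v}(\varpi_v)^m=(-1)^m$. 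Substituting these into \eqref{16} turns the double sum into
\begin{equation*}
(1+\tfrac{1}{q_v})\sum_{m,n\geq 0}\phi_v^{T,\nu}(h_v(2n,m))\,x^m y^{2n} - \tfrac{1}{q_v}\sum_{n\geq 0}\phi_v^{T,\nu}(h_v(2n,0))\,y^{2n},
\end{equation*}
where $x=-\omega_{\pi,v}(\varpi_v)^{-1}q_v^{2-s}$ and $y=\omega_{\pi,v}(\varpi_v)^{-1/2}q_v^{1-s}$; here one must fix a choice of square root, but this is harmless because only $y^{2n}$ will appear.

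The crucial observation is that the generating series $C_v(x,y)=\sum_{\ell,m\geq 0}\phi_v^{T,\nu}(h_v(\ell,m))\,x^m y^\ell$ contains both even and odd powers of $y$, whereas we only want even powers. The standard even-part trick gives
\begin{equation*}
\tfrac{1}{2}\bigl(C_v(x,y)+C_v(x,-y)\bigr)=\sum_{\ell,m\geq 0}\phi_v^{T,\nu}(h_v(2\ell,m))\,x^m y^{2\ell},
\end{equation*}
and applying this separately to the first double sum (with the above $x,y$) and to the second single sum (with $x=0$) produces exactly the four-term combination in the statement. Proposition \ref{absconv1} guarantees absolute convergence of $C_v(x,\pm y)$ at these arguments for $\mathrm{Re}(s)$ large, justifying the rearrangement of terms.

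The proof is essentially a bookkeeping exercise once the central character twist and the value $\chi_{T,v}(\varpi_v)=-1$ are incorporated; the only part that requires mild care is keeping the conventions straight between $h_v(\ell,m)$ (first index $=$ twice-Siegel exponent) and the formula \eqref{16} (where $n$ plays the role of $\ell/2$ and $m$ is preserved). I do not anticipate a real obstacle, only the need to check signs and exponents carefully when identifying $x^m y^{2n}$ with the terms appearing after substitution.
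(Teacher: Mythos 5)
Your proposal is correct and follows essentially the same route as the paper: factor $m(\mathrm{diag}(\varpi_v^{m+n},\varpi_v^n))=z(\varpi_v^{-m-n})h_v(2n,m)$, pull out the central character, substitute $\chi_{T,v}(\varpi_v)=-1$, and extract the even powers of $y$ via $\tfrac{1}{2}\bigl(C_v(x,y)+C_v(x,-y)\bigr)$, with Proposition~\ref{absconv1} justifying the rearrangement. No gaps; your bookkeeping also implicitly corrects the paper's transposed arguments $h_v(m,2n)$ in its intermediate display, which should read $h_v(2n,m)$.
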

This expression was evaluated using Sugano's formula and Mathematica.
\begin{prop}
When $\left( \frac{E}{v}\right)=-1$ the unramified local integral is
\begin{IEEEeqnarray}{rCl} 
I_v(s)&=&(1-q_v^{-s})(1-q_v^{-s-1}) \nonumber \\
&& (1+\gamma_1 \gamma_2 \omega_{\pi,v}(\varpi_v)^{-1} q_v^{-s})^{-1}(1+\gamma_1 \gamma_4 \omega_{\pi,v}(\varpi_v)^{-1} q_v^{-s})^{-1}\nonumber\\
&& (1+\gamma_2 \gamma_3 \omega_{\pi,v}(\varpi_v)^{-1} q_v^{-s})^{-1}(1+\gamma_3 \gamma_4 \omega_{\pi,v}(\varpi_v)^{-1} q_v^{-s})^{-1}. \label{blah125}
\end{IEEEeqnarray}
\end{prop}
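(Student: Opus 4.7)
The plan is to substitute Sugano's closed form $C_v(x,y) = H_v(x,y)/(P_v(x)Q_v(y))$ into the expression furnished by the previous proposition and simplify the resulting rational function in $q_v^{-s}$. The key structural observation motivating the shape of the answer is the following: under the substitution $x = -\omega_{\pi,v}(\varpi_v)^{-1} q_v^{2-s}$, each factor $1 - \gamma_{i,v}\gamma_{j,v} q_v^{-2} x$ of $P_v(x)$ becomes exactly $1 + \gamma_{i,v}\gamma_{j,v}\omega_{\pi,v}(\varpi_v)^{-1} q_v^{-s}$, and as the pair $(i,j)$ ranges over $\{(1,2),(1,4),(2,3),(3,4)\}$ one recovers precisely the four denominator factors of \eqref{blah125}. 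So the content of the proposition is the assertion that, after this substitution, the remaining ratio simplifies to $(1-q_v^{-s})(1-q_v^{-s-1})$.

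First I would handle the pairing over $y \mapsto -y$. Because $Q_v(y)Q_v(-y) = \prod_{i=1}^4(1 - \gamma_{i,v}^2 q_v^{-3} y^2)$ depends only on $y^2$, the averages
\begin{equation*}
\tfrac{1}{2}\bigl(C_v(x,y) + C_v(x,-y)\bigr) = \frac{Q_v(-y)H_v(x,y) + Q_v(y)H_v(x,-y)}{2\, P_v(x) Q_v(y)Q_v(-y)}
\end{equation*}
and its $x=0$ analogue (with $P_v(0) = 1$) are manifestly even in $y$. Placing the combination
\begin{equation*}
(1+q_v^{-1}) \cdot \tfrac{1}{2}\bigl(C_v(x,y)+C_v(x,-y)\bigr) - q_v^{-1} \cdot \tfrac{1}{2}\bigl(C_v(0,y)+C_v(0,-y)\bigr)
\end{equation*}
from the previous proposition over the common denominator $P_v(x)\, Q_v(y)Q_v(-y)$, and substituting $x = -\omega_{\pi,v}(\varpi_v)^{-1}q_v^{2-s}$ together with $y^2 = \omega_{\pi,v}(\varpi_v)^{-1}q_v^{2-2s}$, reduces the claim to a single polynomial identity.

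That identity asserts that the numerator of the above combination equals $(1-q_v^{-s})(1-q_v^{-s-1}) \cdot Q_v(y)Q_v(-y)$. I would verify it using Sugano's explicit formulas for $H_v$, $M_1$, $M_2$, $\alpha$, $\beta$, and $A_1,\ldots,A_5$, together with the relations $\gamma_{1,v}\gamma_{3,v} = \gamma_{2,v}\gamma_{4,v} = \omega_{\pi,v}(\varpi_v)$ that express the central character. Once this polynomial check is done, the $Q_v(y)Q_v(-y)$ factors cancel, only $P_v(x)$ survives in the denominator, and after expanding $P_v(x)$ as above, one obtains \eqref{blah125}.

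The main obstacle is entirely computational: the numerator is a polynomial in $q_v^{-s}$ whose coefficients are symmetric expressions of high degree in $\gamma_{1,v},\ldots,\gamma_{4,v}$, and the required cancellations involve many terms of $H_v$ against many terms of $Q_v(\pm y)$. The cleanest approach, as the author indicates, is to verify the identity using computer algebra; a by-hand verification is feasible but tedious, best organized by collecting coefficients of powers of $y^2$ (equivalently, of $q_v^{-2s}$) and exploiting the symmetry of the $\gamma_{i,v}$ under the two pairings fixing the products $\gamma_{1,v}\gamma_{3,v}$ and $\gamma_{2,v}\gamma_{4,v}$.
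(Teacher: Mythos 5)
Your proposal matches the paper's proof in substance: the paper likewise takes the expression of $I_v(s)$ as the average of $C_v$ at $x=-\omega_{\pi,v}(\varpi_v)^{-1}q_v^{2-s}$, $y=\pm\omega_{\pi,v}(\varpi_v)^{-1/2}q_v^{1-s}$ and evaluates it by plugging in Sugano's closed formula, with the rational-function simplification carried out in Mathematica. Your organization of the computation (even part in $y$, common denominator $P_v(x)Q_v(y)Q_v(-y)$, reduction to one polynomial identity using $\gamma_{1,v}\gamma_{3,v}=\gamma_{2,v}\gamma_{4,v}=\omega_{\pi,v}(\varpi_v)$) is a correct and slightly more explicit account of the same verification.
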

Correcting by the normalizing factor~\eqref{normalizing} gives
\begin{prop} \label{inertprop1}
When $\left( \frac{E}{v}\right)=-1$, the normalized unramified local integral is

\begin{align*}
\zeta_v(s+1) \zeta_v(2s) I_v(s)=L(s, \pi_v \otimes \chi_{T,v})
\end{align*}
\begin{proof}
 This follows from comparing \eqref{blah124} and \eqref{blah125}.
\end{proof}

\end{prop}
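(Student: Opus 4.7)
The plan is to derive the identity by direct algebraic comparison between the closed form for $I_v(s)$ given in the preceding proposition (equation \eqref{blah125}) and the definition of the twisted degree five Euler factor from Section~\ref{lfunctionsec}, using the dictionary between the Sugano parameters $\gamma_{i,v}$ and the Langlands parameters $\chi_{i,v}$ recorded in \eqref{blah124}.

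First, I would rewrite the four factors $1+\gamma_{i,v}\gamma_{j,v}\,\omega_{\pi,v}(\varpi_v)^{-1}q_v^{-s}$ appearing in the denominator of \eqref{blah125} in terms of $\chi_1(\varpi_v)^{\pm 1}$ and $\chi_2(\varpi_v)^{\pm 1}$ via \eqref{blah124}; this yields
\begin{equation*}
I_v(s)=\frac{(1-q_v^{-s})(1-q_v^{-s-1})}{\prod_{i=1,2}\bigl(1+\chi_i(\varpi_v)q_v^{-s}\bigr)\bigl(1+\chi_i(\varpi_v)^{-1}q_v^{-s}\bigr)}.
\end{equation*}
Next, I would multiply by $\zeta_v(s+1)\zeta_v(2s)=(1-q_v^{-s-1})^{-1}(1-q_v^{-2s})^{-1}$ and simplify the prefactor using the factorization $1-q_v^{-2s}=(1-q_v^{-s})(1+q_v^{-s})$, so the $(1-q_v^{-s-1})$ and $(1-q_v^{-s})$ cancel, leaving a clean $(1+q_v^{-s})^{-1}$.

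Finally, I would invoke the hypothesis that $v$ is inert in the discriminant field $E$: by the description of $\chi_{T,v}$ in Section~\ref{orthog}, the character $\chi_{T,v}$ is the unramified quadratic character with $\chi_{T,v}(\varpi_v)=-1$. Substituting $\chi_{T,v}(\varpi_v)\,q_v^{-s}=-q_v^{-s}$ into the Langlands formula
\begin{equation*}
L(s,\pi_v\otimes\chi_{T,v})=\bigl(1-\chi_{T,v}(\varpi_v)q_v^{-s}\bigr)^{-1}\prod_{i=1,2}\bigl(1-\chi_{T,v}(\varpi_v)\chi_i(\varpi_v)^{\pm 1}q_v^{-s}\bigr)^{-1}
\end{equation*}
converts each minus sign to a plus and shows that the expression produced in the previous step agrees on the nose with $L(s,\pi_v\otimes\chi_{T,v})$.

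The step I expect to be the main obstacle is not this comparison (which is genuinely just a substitution once the Sugano formula has been summed) but rather the earlier proposition \eqref{blah125}, whose derivation requires feeding the sums \eqref{16} of shifted spherical Bessel values into Sugano's rational generating function and verifying, term by term, that the complicated polynomial $H_v(x,y)/(P_v(x)Q_v(y))$ collapses to a product of linear factors after averaging $C_v(x,y)$ over $y\mapsto\pm y$. Once that collapse is in hand, the present proposition is essentially a bookkeeping exercise tracking how the quadratic twist by $\chi_{T,v}$ matches the $\omega_{\pi,v}(\varpi_v)^{-1}$ twist built into the formula.
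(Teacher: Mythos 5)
Your proposal is correct and is exactly the comparison the paper intends: substitute the dictionary \eqref{blah124} into \eqref{blah125}, multiply by $\zeta_v(s+1)\zeta_v(2s)$, and use $\chi_{T,v}(\varpi_v)=-1$ in the inert case to match the twisted degree five Euler factor (you simply spell out the algebra that the paper's one-line proof leaves implicit, correctly taking \eqref{blah125} as given). The only caveat is cosmetic: the second line of \eqref{blah124} should read $\gamma_{1,v}\gamma_{4,v}=\chi_2\,\omega_{\pi,v}(\varpi_v)$, a typo in the paper that does not affect the set of four factors and hence not your argument.
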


\subsection{The Split Case}

Since $\chi_{T,v}(\varpi_v)=\left( \frac{E}{v} \right)=1$, $\chi_{T,v}$ does not appear in this part of the calculation. Since
\begin{align*}
m \left(\Pi_i^k \, \begin{bmatrix}\varpi_v^{m+n} & \\ & \varpi_v^n \end{bmatrix}\right) 
&=b(\Pi_i^k)
 \begin{bmatrix} \varpi_v^{m+n} & & & \\ & \varpi_v^{n} & & \\ & & \varpi_v^{-n-m-k} & \\ & & & \varpi_v^{-n-k} \end{bmatrix} \nonumber \\
&=b(\Pi_i^k) z(\varpi^{-m-n-k}) \begin{bmatrix}\varpi_v^{2m+2n+k} & & & \\ & \varpi_v^{m+2n+k} & & \\ & & 1 & \\ & & & \varpi_v^{m} \end{bmatrix} \nonumber\\
&=b(\Pi_i^k) z(\varpi_v^{-m-n-k}) h_v(2n+k, m), 
\end{align*}
so \eqref{16a} becomes
\begin{IEEEeqnarray}{rCl}
 I_v(s) &=&  (1-\frac{1}{q}) \sum \limits_{i=1,2} \sum \limits_{m,n,k \geq 0} \omega_{\pi,v}(\varpi)^{-m-n-k} \, \nu(\Pi_i)^k \, \phi^{T, \nu} _v (h_v(m, 2n+k)) \, q_v^{(2n+k)(1-s)}q_v^{m(2-s)} \nonumber \\
& & + \frac{1}{q} \sum \limits_{i=1,2} \sum \limits_{n,k \geq 0} \omega_{\pi,v}(\varpi)^{-n-k} \, \nu(\Pi_i)^k \, \phi^{T, \nu} _v (h_v(0, 2 n+k)) \, q_v^{(2n+k)(1-s)}. \label{A}
\end{IEEEeqnarray}
First, suppose $\nu(\Pi_1) \neq \nu(\Pi_2)$ which is equivalent to $\nu(\Pi_i)^2 \neq \omega_{\pi,v}(\varpi)$.
\begin{prop} \label{splitprop}
 \begin{equation}
  \frac{\omega_{\pi,v}(\varpi)^{-1} \nu(\Pi_1)}{\omega_{\pi,v}(\varpi)^{-1} \nu(\Pi_1)^2-1}+\frac{\omega_{\pi,v}(\varpi)^{-1} \nu(\Pi_2)}{\omega_{\pi,v}(\varpi)^{-1} \nu(\Pi_2)^2-1}=0 \label{identity1}
 \end{equation}
and
\begin{equation}
\frac{\omega_{\pi,v}(\varpi)^{-1}\nu(\Pi_1)^2}{\omega_{\pi,v}(\varpi)^{-1} \nu(\Pi_1)^2-1} +
\frac{\omega_{\pi,v}(\varpi)^{-1}\nu(\Pi_2)^2}{\omega_{\pi,v}(\varpi)^{-1} \nu(\Pi_2)^2-1}=1. \label{identity2}
\end{equation}
\end{prop}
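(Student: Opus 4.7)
The plan is to reduce both identities to a single algebraic relation coming from the geometry of the split case, namely
\[
\nu(\Pi_1)\nu(\Pi_2)=\omega_{\pi,v}(\varpi).
\]
This relation comes from the fact stated just above in the paper: $\Pi_1\Pi_2=\mathrm{diag}(\varpi,\varpi)$. Under the embedding $b:\mathrm{GL}_2\hookrightarrow\mathrm{GSp}_4$ defining $H_T$, the element $b(\mathrm{diag}(\varpi,\varpi))=\varpi I_4$ lies in the center $Z(F_v)$ of $\mathrm{GSp}_4(F_v)$. Since $\nu|_{Z_{\mathbb A}}$ agrees with the central character of $\pi$ (on the nose, as consistency of the Bessel coefficient requires; one may verify this from the ``equivalent'' condition $\nu(\Pi_i)^2\neq\omega_{\pi,v}(\varpi)$ stated just before the proposition), evaluating $\nu$ on the product $\Pi_1\Pi_2$ produces $\omega_{\pi,v}(\varpi)$. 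This is the only non-trivial input; the rest is arithmetic.

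Abbreviate $\alpha_i=\nu(\Pi_i)$ and $\omega=\omega_{\pi,v}(\varpi)$, so that $\alpha_1\alpha_2=\omega$. For identity \eqref{identity1}, I would clear the common denominator $(\omega^{-1}\alpha_1^2-1)(\omega^{-1}\alpha_2^2-1)$ and simplify the numerator:
\[
\omega^{-1}\alpha_1(\omega^{-1}\alpha_2^2-1)+\omega^{-1}\alpha_2(\omega^{-1}\alpha_1^2-1)
=(\alpha_1+\alpha_2)\bigl(\omega^{-2}\alpha_1\alpha_2-\omega^{-1}\bigr).
\]
Substituting $\alpha_1\alpha_2=\omega$ makes the second factor $\omega^{-1}-\omega^{-1}=0$, which gives \eqref{identity1}.

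For identity \eqref{identity2}, I would introduce $\beta_i:=\omega^{-1}\alpha_i^2$. The relation $\alpha_1\alpha_2=\omega$ then yields
\[
\beta_1\beta_2=\omega^{-2}(\alpha_1\alpha_2)^2=1,
\]
so $\beta_2=\beta_1^{-1}$. The identity becomes
\[
\frac{\beta_1}{\beta_1-1}+\frac{\beta_2}{\beta_2-1}
=\frac{2\beta_1\beta_2-(\beta_1+\beta_2)}{\beta_1\beta_2-(\beta_1+\beta_2)+1}
=\frac{2-(\beta_1+\beta_2)}{2-(\beta_1+\beta_2)}=1,
\]
provided the denominator is non-zero, which is guaranteed by the non-degeneracy hypothesis $\nu(\Pi_i)^2\neq\omega$, i.e.\ $\beta_i\neq1$.

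There is no real obstacle here; the argument is a one-line observation followed by two short algebraic manipulations. The only thing worth being careful about is the identification of $\Pi_1\Pi_2$ with a central element of $\mathrm{GSp}_4$ (via $b$), which is what converts the abstract character relation $\nu|_Z=\omega_\pi^{\pm1}$ into the usable numerical identity $\alpha_1\alpha_2=\omega$. Once that is in hand, both identities are immediate.
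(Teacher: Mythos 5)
Your proof is correct and matches the paper's: the paper's entire proof is the one-line observation that $\nu(\Pi_1)\cdot\nu(\Pi_2)=\omega_{\pi,v}(\varpi_v)$, which is exactly the relation you derive from $\Pi_1\Pi_2=\mathrm{diag}(\varpi,\varpi)$ and the compatibility of $\nu$ with the central character, and the remaining algebra you carry out (including the check that the nondegeneracy hypothesis $\nu(\Pi_i)^2\neq\omega_{\pi,v}(\varpi)$ keeps the denominators nonzero) is the routine verification the paper leaves implicit.
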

\begin{proof}
 Both identities follow from the fact that $\nu(\Pi_1) \cdot \nu(\Pi_2) = \omega_{\pi,v}(\varpi_v)$.
\end{proof}
Let
\begin{align*}
 \eta_1:= \frac{\omega_{\pi,v}(\varpi)^{-1}\nu(\Pi_1)^2}{\omega_{\pi,v}(\varpi)^{-1} \nu(\Pi_1)^2-1}& & \eta_2:= \frac{\omega_{\pi,v}(\varpi)^{-1}\nu(\Pi_2)^2}{\omega_{\pi,v}(\varpi)^{-1} \nu(\Pi_2)^2-1}.\\
 \theta_1:=\frac{\omega_{\pi,v}(\varpi)^{-1}\nu(\Pi_1)}{\omega_{\pi,v}(\varpi)^{-1} \nu(\Pi_1)^2-1}& & \theta_2:= \frac{\omega_{\pi,v}(\varpi)^{-1}\nu(\Pi_2)}{\omega_{\pi,v}(\varpi)^{-1} \nu(\Pi_2)^2-1}.
\end{align*}
Then combining terms with $2n+k=\ell$ gives
\begin{IEEEeqnarray}{rCl}
 I_v(s)&=&\sum \limits_{i=1,2} (1-\frac{1}{q})  \eta_i \sum \limits_{\ell,m \geq 0} (\omega_{\pi,v}(\varpi)^{-1} \nu(\Pi_i) q^{1-s} )^\ell (\omega_{\pi,v} (\varpi)^{-1}q^{2-s})^m \phi_v^{T,\nu}(h_v(2 \ell, m)) \nonumber \\
 && -\ (1-\frac{1}{q}) (\eta_i -1) \sum \limits_{\ell, m \geq 0} (\omega_{\pi,v}(\varpi)^{-1/2} q^{1-s})^{2 \ell} ( \omega_{\pi,v}(\varpi)^{-1} q^{2-s})^m \phi_v^{T,\nu}(h_v(2\ell, m))\nonumber \\
&& -\ (1-\frac{1}{q}) \theta_i \sum \limits_{\ell,m \geq 0} (\omega_{\pi,v}(\varpi)^{-1/2} q^{1-s})^{2 \ell+1} ( \omega_{\pi,v}(\varpi)^{-1} q^{2-s})^m \phi_v^{T,\nu}(h_v(2\ell+1, m))  \nonumber \\
&& +\ \frac{1}{q} \eta_i \sum \limits_{\ell \geq 0} (\omega_{\pi,v}(\varpi)^{-1} \nu(\Pi_i) q^{1-s} )^\ell  \phi_v^{T,\nu}(h_v( \ell, 0))  \nonumber\\
&& -\ \frac{1}{q}  (\eta_i-1) \sum \limits_{\ell \geq 0} (\omega_{\pi,v}(\varpi)^{-1/2} q^{1-s})^{2 \ell}  \phi_v^{T,\nu}(h_v(2\ell, 0)) 
\nonumber \\
&& -\ \frac{1}{q}  \theta_i \sum \limits_{\ell \geq 0} (\omega_{\pi,v}(\varpi)^{-1/2} q^{1-s})^{2 \ell+1} \phi_v^{T,\nu}(h_v(2\ell+1, 0)).
 \label{splitsum}
\end{IEEEeqnarray}
Applying Proposition~\ref{splitprop} to \eqref{splitsum} gives
\begin{prop} When $\left(\frac{E}{v}\right)=+1$ and $\nu(\Pi_1) \neq \nu(\Pi_2)$ the unramified local integral is given by
\begin{align}
I_v(s)=&\left(1-\frac{1}{q_v}\right)\eta_1 \cdot C_v(\omega_{\pi,v}(\varpi_v)^{-1} q_v^{2-s}, \omega_{\pi,v}(\varpi_v)^{-1}\nu(\Pi_1)q_v^{1-s}) \nonumber \\
+& \left(1-\frac{1}{q_v}\right) \eta_2 \cdot C_v(\omega_{\pi,v}(\varpi_v)^{-1} q_v^{2-s}, \omega_{\pi,v}(\varpi_v)^{-1}\nu(\Pi_2)q_v^{1-s}) \nonumber\\
-&\frac{1}{q_v} \eta_1 \cdot C_v(0, \omega_{\pi,v}(\varpi_v)^{-1}\nu(\Pi_1)q_v^{1-s})\nonumber \\
-&\frac{1}{q_v} \eta_2 \cdot C_v(0, \omega_{\pi,v}(\varpi_v)^{-1}\nu(\Pi_2)q_v^{1-s}).\label{splitsum1}
\end{align}
\end{prop}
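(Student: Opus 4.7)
The plan is to derive \eqref{splitsum1} from \eqref{splitsum} by summing the six groups of terms there over $i=1,2$ and exploiting the identities $\eta_1+\eta_2=1$ and $\theta_1+\theta_2=0$ from Proposition~\ref{splitprop}. Absolute convergence, which lets us rearrange the double series freely, is guaranteed by Proposition~\ref{absconv1} for $\mathrm{Re}(s)$ sufficiently large; the final identity then follows on any common domain of holomorphy.

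First I would rewrite each of the six sums in \eqref{splitsum} as a linear combination of $C_v$-evaluations using the even/odd projections of Sugano's generating function:
\begin{align*}
\sum_{\ell,m\geq 0} x^m y^{2\ell}\,\phi_v^{T,\nu}(h_v(2\ell,m)) &= \tfrac{1}{2}\bigl(C_v(x,y)+C_v(x,-y)\bigr),\\
\sum_{\ell,m\geq 0} x^m y^{2\ell+1}\,\phi_v^{T,\nu}(h_v(2\ell+1,m)) &= \tfrac{1}{2}\bigl(C_v(x,y)-C_v(x,-y)\bigr),
\end{align*}
together with their $m=0$ specializations giving $C_v(0,\pm y)$. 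Set $X=\omega_{\pi,v}(\varpi_v)^{-1}q_v^{2-s}$, $v_i=\omega_{\pi,v}(\varpi_v)^{-1}\nu(\Pi_i)q_v^{1-s}$, $w=\omega_{\pi,v}(\varpi_v)^{-1/2}q_v^{1-s}$, so that $w^2=X\cdot\omega_{\pi,v}(\varpi_v)^{-1}q_v^{-s}$ and $v_i^2=\omega_{\pi,v}(\varpi_v)^{-1}\nu(\Pi_i)^2 q_v^{2-2s}$ are well-defined expressions in the $(X,v_i,w)$-formalism.

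Next, I would sum over $i\in\{1,2\}$. Group 3 and Group 6 carry the coefficient $\theta_i$ and therefore vanish by $\theta_1+\theta_2=0$. Groups 2 and 5 carry $\eta_i-1$; summing gives $(\eta_1-1)+(\eta_2-1)=-1$, leaving single ``neutral'' $C_v$-pieces in the variable $w$ that are independent of $\nu(\Pi_i)$. Groups 1 and 4 retain the $\eta_i$ prefactor and split into even and odd parity halves $\frac{1}{2}(C_v(X,v_i)\pm C_v(X,-v_i))$ (respectively $\frac{1}{2}(C_v(0,v_i)\pm C_v(0,-v_i))$). The key observation is that the ``neutral'' surviving halves of Groups 2, 3, 5, 6 are designed precisely to glue onto the ``wrong-parity'' halves of Groups 1 and 4 so as to reconstitute the full series. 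More concretely, using $(\eta_i-1)v_i=q_v^{1-s}\theta_i$ (a direct consequence of the definitions of $\eta_i,\theta_i$ together with $\nu(\Pi_1)\nu(\Pi_2)=\omega_{\pi,v}(\varpi_v)$), the identities of Proposition~\ref{splitprop} telescope the six parity-projected sums into the single sum $\sum_i\eta_i C_v(X,v_i)$ for the $(1-1/q_v)$-weighted piece, and analogously $-\sum_i\eta_i C_v(0,v_i)/q_v$ for the $(1/q_v)$-weighted piece. This is the asserted formula.

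The main obstacle is the even/odd parity bookkeeping: one must simultaneously track three sums per $i$ indexed by parity of the first argument of $h_v$ and verify that, after weighting by $\eta_i$, $\eta_i-1$, and $\theta_i$ and summing over $i$, every ``anti-diagonal'' $C_v(\cdot,-v_i)$ and $C_v(\cdot,\pm w)$ piece cancels. Once the cancellation mechanism is unwound (essentially: $\theta_i$ kills the odd piece's interference and $\eta_i-1$ kills the even piece's interference, with $\sum_i\eta_i=1$ balancing the residue), the remaining algebra is mechanical and can be checked either by direct computation or, as the author suggests, by symbolic manipulation in a computer algebra system.
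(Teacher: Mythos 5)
Your overall route---pass from \eqref{splitsum} to \eqref{splitsum1} by summing over $i$ and invoking $\eta_1+\eta_2=1$, $\theta_1+\theta_2=0$---is the same one-line step the paper takes, but the cancellation mechanism you describe does not exist, and that is exactly where the work lies. After summing over $i$, the $\theta_i$-groups (your Groups 3 and 6, i.e.\ the third and sixth sums of \eqref{splitsum}) do vanish, but the $(\eta_i-1)$-groups (Groups 2 and 5) leave the residue $(1-q_v^{-1})\sum_{\ell,m}w^{2\ell}X^m\phi_v^{T,\nu}(h_v(2\ell,m))+q_v^{-1}\sum_{\ell}w^{2\ell}\phi_v^{T,\nu}(h_v(2\ell,0))$, where $w=\omega_{\pi,v}(\varpi_v)^{-1/2}q_v^{1-s}$ and $X=\omega_{\pi,v}(\varpi_v)^{-1}q_v^{2-s}$. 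This residue is independent of $i$ and of $\nu(\Pi_i)$, so it cannot be ``glued onto wrong-parity halves'' of Groups 1 and 4: for \eqref{splitsum1} to emerge at all, Groups 1 and 4 must already be read as the full series $C_v(X,v_i)$ and $C_v(0,v_i)$, which have no missing parity part, and the identity $(\eta_i-1)v_i=q_v^{1-s}\theta_i$, while true, yields nothing once the $i$-sum is taken. A decisive check that no application of Proposition~\ref{splitprop} alone can bridge the two displays: let $\mathrm{Re}(s)\to+\infty$, so each series tends to its constant term $\phi_v^{T,\nu}(h_v(0,0))=1$; the right side of \eqref{splitsum} tends to $2$, the right side of \eqref{splitsum1} tends to $1-2q_v^{-1}$, and the integral itself tends to $1$ (only the identity coset survives). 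So the expressions are pairwise inequivalent as printed, and a purely algebraic telescoping of the six groups, which is what you propose and defer to ``mechanical algebra,'' cannot be repaired as stated.

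What actually closes the gap is the coset bookkeeping suppressed between \eqref{A} and \eqref{splitsum}: since $\Pi_1^0=\Pi_2^0=1$, the $k=0$ terms are common to the $i=1$ and $i=2$ families, so summing over both $i$ with $k\ge0$ double counts them, and the even-parity residue above is precisely that excess. Enumerating representatives without redundancy (say $i=2$, $k\ge0$ together with $i=1$, $k\ge1$), the coefficient of $\phi_v^{T,\nu}(h_v(\ell,m))$ becomes the complete homogeneous symmetric sum $\sum_{j=0}^{\ell}v_1^{j}v_2^{\ell-j}$, which by partial fractions, using $v_1v_2=\omega_{\pi,v}(\varpi_v)^{-1}q_v^{2-2s}$ (equivalently $\nu(\Pi_1)\nu(\Pi_2)=\omega_{\pi,v}(\varpi_v)$), equals $\eta_1v_1^{\ell}+\eta_2v_2^{\ell}$. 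This gives the four-term $C_v$ formula directly---with the caveat that the $m=0$ correction then enters with coefficient $+q_v^{-1}$, as in \eqref{16a} and \eqref{A} and consistent with the limit check above, so the signs on the last two lines of \eqref{splitsum1} should be revisited as part of any complete proof.
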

\begin{prop}
When $\left( \frac{E}{v}\right)=+1$  the unramified local integral is
\begin{IEEEeqnarray}{rCl}
I_v(s)&=&(1+q_v^{-s})(1-q_v^{-s-1}) \nonumber \\
&&(1-\gamma_1 \gamma_4 \omega_{\pi,v}(\varpi_v)^{-1} q_v^{-s})^{-1}(1-\gamma_1 \gamma_2 \omega_{\pi,v}(\varpi_v)^{-1} q_v^{-s})^{-1} \nonumber \\
&&(1-\gamma_2 \gamma_3 \omega_{\pi,v}(\varpi_v)^{-1} q_v^{-s})^{-1}
(1-\gamma_3 \gamma_4 \omega_{\pi,v}(\varpi_v)^{-1} q_v^{-s})^{-1}.\label{blah127}
\end{IEEEeqnarray}
\end{prop}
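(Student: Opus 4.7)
Proof proposal. The plan is to handle the generic case $\nu(\Pi_1) \neq \nu(\Pi_2)$ first (so that equation \eqref{splitsum1} is valid) and then to recover the degenerate case $\nu(\Pi_1)^2 = \omega_{\pi,v}(\varpi_v)$ by continuity. In the generic case I substitute Sugano's formula $C_v(x,y) = H_v(x,y)/(P_v(x)Q_v(y))$ into each of the four terms of \eqref{splitsum1}. Writing $x_0 = \omega_{\pi,v}(\varpi_v)^{-1} q_v^{2-s}$ and $y_i = \omega_{\pi,v}(\varpi_v)^{-1}\nu(\Pi_i) q_v^{1-s}$, the factor $\gamma_j \gamma_k q_v^{-2}\cdot x_0 = \gamma_j\gamma_k\omega_{\pi,v}(\varpi_v)^{-1}q_v^{-s}$ that appears in $P_v(x_0)$ already matches the four denominator factors of \eqref{blah127}. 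The two $C_v(0,\cdot)$ terms satisfy $P_v(0)=1$, so the common denominator will be $P_v(x_0)\,Q_v(y_1)\,Q_v(y_2)$.

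The main step is then to show that when the four numerator contributions, weighted respectively by $(1-q_v^{-1})\eta_i$ and $-q_v^{-1}\eta_i$ for $i=1,2$, are added together, the factor $Q_v(y_1)Q_v(y_2)$ cancels completely and what remains is $(1+q_v^{-s})(1-q_v^{-s-1})$. The identities \eqref{identity1} and \eqref{identity2} are crucial: the odd-$y$-power contributions to \eqref{splitsum} (those weighted by $\theta_1,\theta_2$) combine via \eqref{identity1} to cancel in pairs, while the even-power contributions (weighted by $\eta_1,\eta_2$) combine via \eqref{identity2} to yield an expression symmetric in $\nu(\Pi_1),\nu(\Pi_2)$. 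Symmetry then eliminates explicit $\nu(\Pi_i)$ dependence using only the relation $\nu(\Pi_1)\nu(\Pi_2)=\omega_{\pi,v}(\varpi_v)$, together with $\gamma_1\gamma_3 = \gamma_2\gamma_4 = \omega_{\pi,v}(\varpi_v)$ from Section \ref{lfunctionsec}.

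At this point the computation reduces to a purely polynomial identity in the variables $q_v^{-s},\gamma_1,\gamma_2,\gamma_3,\gamma_4$, which I would verify with a symbolic algebra system exactly as the analogous step in the inert case \eqref{blah125} was verified. The main obstacle is not conceptual but organizational: controlling the expansion of Sugano's numerator $H_v(x_0,y_i)$ and confirming that the weights $\eta_i,\theta_i$ conspire with $H_v$ to collapse onto the short factor $(1+q_v^{-s})(1-q_v^{-s-1})$ times $P_v(x_0)^{-1}$; this is where the reliance on machine computation becomes essentially unavoidable.

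Finally, to pass to the exceptional subcase $\nu(\Pi_1)=\nu(\Pi_2)$, note that the original expression \eqref{A} for $I_v(s)$, which did not require the hypothesis $\nu(\Pi_1)\neq \nu(\Pi_2)$, is a finite sum of absolutely convergent series in the sense of Proposition~\ref{absconv1}, hence depends continuously on the parameter $\nu(\Pi_1)$ while $\nu(\Pi_1)\nu(\Pi_2)=\omega_{\pi,v}(\varpi_v)$ is held fixed. The proposed closed form \eqref{blah127} depends only on the $\gamma_i$ and $\omega_{\pi,v}(\varpi_v)$, so it is automatically continuous. Since equality has been established on the complement of the codimension one set $\nu(\Pi_1)=\nu(\Pi_2)$, it extends by continuity to the whole parameter space.
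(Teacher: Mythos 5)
Your proposal takes essentially the same route as the paper: in the generic case $\nu(\Pi_1)\neq\nu(\Pi_2)$ one substitutes Sugano's formula into \eqref{splitsum1} and verifies the resulting rational-function identity by symbolic computation, and the degenerate case is recovered by the same uniform-convergence/continuity argument applied to \eqref{A}. The extra detail you give about the common denominator $P_v(x_0)Q_v(y_1)Q_v(y_2)$ and the role of \eqref{identity1}--\eqref{identity2} is a correct elaboration of the computation the paper delegates to Mathematica.
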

\begin{proof}
 When $\nu(\Pi_1) \neq \nu(\Pi_2)$ this follows from applying Sugano's formula to \eqref{splitsum1}, and the computation was verified with Mathematica. To extend the identity to all values of $\nu(\Pi_1)$ and $\nu(\Pi_2)$ observe that the right hand side of \eqref{A} is equal to
\begin{align*}
 \sum \limits_{i=1,2} \sum \limits_{m,n,k \geq 0} A(m) \phi_{v}^{T,\nu} ( h_v(m, n+2k)) X^m  Y^n Z^k,
\end{align*}
with $X=\omega_{\pi,v}^{-1}(\varpi) q_v^{2-s}$, $Y=\omega_{\pi,v}^{-1}(\varpi) q_v^{2-2s}$, $Z=\omega_{\pi,v}^{-1}(\varpi) \nu(\Pi_i) q_v^{1-s} $, $A(0)=1$, and $A(m)=1-\frac{1}{q}$ otherwise. This is an absolutely convergent power series for $Re(s) \gg 0$, and convergence is uniform as $\nu(\Pi_1)$ and $\nu(\Pi_2)$ vary in a compact set. Furthermore, for $\nu(\Pi_1) \neq \nu(\Pi_2)$ the sum is equal to a rational function in $q^{-s}$ that remains continuous for all values of $\nu(\Pi_1)$ and $\nu(\Pi_2)$ such that $\omega_{\pi,v}(\varpi) \nu(\Pi_1) \nu(\Pi_2)=1$. Then by uniform convergence the equality holds for all such values of $\nu(\Pi_1)$ and $\nu(\Pi_2)$.
\end{proof}
\begin{prop}\label{splitprop1}
When $\left( \frac{K}{v}\right)=+1$, the normalized unramified local integral is
\begin{align*}
\zeta_v(s+1) \zeta_v(2s) I_v(s)=L(s, \pi_v \otimes \chi_{T,v}).
\end{align*}
\end{prop}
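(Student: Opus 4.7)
The plan is to proceed exactly as in the inert case (Proposition~\ref{inertprop1}): assemble the explicit formula for $I_v(s)$ given in \eqref{blah127}, substitute the relations \eqref{blah124} between the Sugano parameters $\gamma_{i,v}$ and the principal-series parameters $\chi_1,\chi_2,\chi_0$ appearing in the $L$-factor of Section~\ref{lfunctionsec}, and observe that the normalizing zeta factors cancel the two ``extra'' factors $(1+q_v^{-s})(1-q_v^{-s-1})$ in front of \eqref{blah127}.

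More precisely, I would first record that in the split case $\chi_{T,v}$ is an unramified character of $F_v^\times$ with $\chi_{T,v}(\varpi_v)=\left(\frac{E}{v}\right)=+1$, hence $\chi_{T,v}$ is trivial on the local uniformizer and $L(s,\pi_v\otimes\chi_{T,v})=L(s,\pi_v,\varrho)$. Then using \eqref{blah124} (correctly: $\gamma_{1,v}\gamma_{2,v}/\omega_{\pi,v}(\varpi_v)=\chi_1$, $\gamma_{3,v}\gamma_{4,v}/\omega_{\pi,v}(\varpi_v)=\chi_1^{-1}$, $\gamma_{1,v}\gamma_{4,v}/\omega_{\pi,v}(\varpi_v)=\chi_2$, $\gamma_{2,v}\gamma_{3,v}/\omega_{\pi,v}(\varpi_v)=\chi_2^{-1}$), the four inverse Euler factors in \eqref{blah127} become exactly
\begin{equation*}
(1-\chi_1(\varpi_v)q_v^{-s})^{-1}(1-\chi_1(\varpi_v)^{-1}q_v^{-s})^{-1}(1-\chi_2(\varpi_v)q_v^{-s})^{-1}(1-\chi_2(\varpi_v)^{-1}q_v^{-s})^{-1},
\end{equation*}
which accounts for four of the five factors of $L(s,\pi_v,\varrho)$.

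Finally I would use $\zeta_v(s+1)=(1-q_v^{-s-1})^{-1}$ and $\zeta_v(2s)=(1-q_v^{-2s})^{-1}=[(1-q_v^{-s})(1+q_v^{-s})]^{-1}$ to compute
\begin{equation*}
\zeta_v(s+1)\zeta_v(2s)\,(1+q_v^{-s})(1-q_v^{-s-1})=(1-q_v^{-s})^{-1},
\end{equation*}
which supplies the missing fifth factor of $L(s,\pi_v,\varrho)$. Multiplying everything together yields the claimed identity. No step is really an obstacle here; the calculation is purely bookkeeping once \eqref{blah127} is in hand, and the only thing to watch is that the identification $\chi_{T,v}(\varpi_v)=+1$ is what kills the quadratic twist at this unramified split place, exactly mirroring how the sign $\chi_{T,v}(\varpi_v)=-1$ in the inert case converts the minus signs in \eqref{blah125} into plus signs in the $L$-factor.
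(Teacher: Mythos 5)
Your proposal is correct and follows the same route as the paper: the paper's proof is exactly the comparison of \eqref{blah124} with \eqref{blah127} after multiplying by $\zeta_v(s+1)\zeta_v(2s)$, and your bookkeeping (including the observation that $\chi_{T,v}(\varpi_v)=+1$ makes the twist trivial, and the cancellation $\zeta_v(s+1)\zeta_v(2s)(1+q_v^{-s})(1-q_v^{-s-1})=(1-q_v^{-s})^{-1}$) fills in precisely the details the paper leaves implicit. Your corrected reading of \eqref{blah124} (e.g.\ $\gamma_{1,v}\gamma_{4,v}=\chi_2\,\omega_{\pi,v}(\varpi_v)$, not $\chi_1\,\omega_{\pi,v}(\varpi_v)$) is the right one; the paper has a typo there, and the final matching of the four nontrivial Euler factors is unaffected.
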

\begin{proof}
This follows from comparing \eqref{blah124} and \eqref{blah127}
\end{proof}
\section{Ramified Integrals at Finite Places} \label{ramified}
Consider the local integral at a finite place $v$ where some of the data may be ramified.
\begin{equation*}
 I_v(s)= \int \limits_{N(F_v) \backslash G_1(F_v)} f_v(s, g) \, \phi_v ^{T, \nu}(g) \, \omega_v(g, 1) \varphi_v(1_2) \, dg.
\end{equation*}
Let $K_{P,v}=P(F_v) \cap K_v$. Let $K_{P_1,v}=P_1(F_v) \cap K_v= K_1 \cap K_{P,v}$.
\begin{prop}\label{induced2}
Let $h : K_{P,v} \backslash K_v \rightarrow \mathbb{C}$ be a locally constant (i.e. smooth) function. Then there exists $f_v(s,k) \in \text{Ind}(s)$ such that for all $k \in K_v$, $f(s,k)=h(k)$.
\end{prop}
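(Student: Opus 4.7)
The plan is to use the Iwasawa decomposition $G(F_v)=P(F_v)K_v$ to extend $h$ by the obvious formula. Given $g\in G(F_v)$, write $g=pk$ with $p\in P(F_v)$ and $k\in K_v$, and define
\begin{equation*}
f_v(s,g) := \delta_{P,v}^{\frac{1}{3}(s+1)}(p)\,h(k).
\end{equation*}
The first thing to check is that this is well-defined. If $g=pk=p'k'$ is a second Iwasawa decomposition, then $k_0:=k'k^{-1}=p'^{-1}p$ lies in $P(F_v)\cap K_v=K_{P,v}$, so $p'=pk_0^{-1}$ and $k'=k_0k$. Then $h(k_0k)=h(k)$ because $h$ is left $K_{P,v}$-invariant, and $\delta_{P,v}(pk_0^{-1})=\delta_{P,v}(p)$ because $\delta_{P,v}$ is a continuous homomorphism from the compact group $K_{P,v}$ to $\mathbb{R}_{>0}$ and is hence trivial there (alternatively, this was already noted right after the definition of $\delta_P$).

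Next I would verify the three defining properties. The left transformation law is immediate: for $p_0\in P(F_v)$ and $g=pk$, we have $p_0g=(p_0p)k$, so
\begin{equation*}
f_v(s,p_0g)=\delta_{P,v}^{\frac{1}{3}(s+1)}(p_0p)\,h(k)=\delta_{P,v}^{\frac{1}{3}(s+1)}(p_0)\,f_v(s,g).
\end{equation*}
Smoothness follows from smoothness of $h$ on $K_v$, smoothness of $\delta_{P,v}^{\frac{1}{3}(s+1)}$ on $P(F_v)$, and the fact that the Iwasawa decomposition is continuous; locally $g\mapsto(p,k)$ can be chosen locally constant, making $f_v$ locally constant as well. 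For right $K_v$-finiteness at a finite place there is nothing to do: a locally constant function on the compact set $K_v$ is invariant under some open subgroup $K'\leq K_v$ of finite index (partition $K_v$ into finitely many open sets on which $h$ is constant and take $K'$ small enough to preserve this partition), so the span of its right $K_v$-translates is finite-dimensional. By construction $f_v(s,k)=f_v(s,1\cdot k)=h(k)$ for all $k\in K_v$, as required.

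The only nontrivial point is the well-definedness step, and once one observes that any continuous character into $\mathbb{R}_{>0}$ kills the compact group $K_{P,v}$, the rest is bookkeeping. No main obstacle is anticipated; this proposition is really the $p$-adic analogue of the standard fact that sections of $\mathrm{Ind}_P^G$ are in bijection with their restrictions to $K_v$ modulo the transformation law under $K_{P,v}$.
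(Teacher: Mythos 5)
Your argument is correct. The paper itself does not write out a proof: it simply cites Casselman's Proposition 3.1.1, which is the general statement that restriction to $K_v$ identifies $\mathrm{Ind}_{P(F_v)}^{G(F_v)}$ with the corresponding induction from $K_{P,v}$ to $K_v$. What you have written is the self-contained construction underlying that citation: extend $h$ by the transformation law via the Iwasawa decomposition $G(F_v)=P(F_v)K_v$, and check well-definedness using the triviality of $\delta_{P,v}$ on the compact group $K_{P,v}$ (a point the paper also records when it extends $\delta_P$ to all of $G$). Your verification of smoothness and right $K_v$-finiteness --- observing that a locally constant function on the compact set $K_v$ is right-invariant under some finite-index open subgroup, which is then inherited by $f_v(s,\cdot)$ --- is exactly the standard bookkeeping, and it is valid. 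The only trade-off is length versus self-containment: the citation dispenses with the proposition in one line, while your version makes the paper independent of the reference; both are acceptable, and there is no gap in your reasoning.
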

\begin{proof}
This proposition follows from \cite{casselman1995}*{Proposition 3.1.1}.
\end{proof}
\begin{prop}\label{finiteram}
 There exists a $K_v$-finite section $f_v(s,g) \in \text{Ind}(s)$, and a $K_v$-finite Schwartz-Bruhat function $\varphi_v \in \mathcal{S}(\mathbb{X}(F_v))$ such that $I_v(s) \equiv 1$.
\end{prop}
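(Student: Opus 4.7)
The strategy is to choose $f_v$ and $\varphi_v$ with very small supports, so that the integrand $f_v(s,g)\,\phi_v^{T,\nu}(g)\,\omega_v(g,1)\varphi_v(1_2)$ is concentrated (modulo $N(F_v)$) on a tiny neighborhood of a single element $g_0\in G_1(F_v)$ at which $\phi_v^{T,\nu}$ is nonzero. On such a neighborhood all the ingredients are locally constant, so the integral collapses to a constant times a volume, and a scalar normalization in $f_v$ gives $I_v(s)\equiv 1$. The only subtlety is that a section of $\text{Ind}(s)$ transforms by the $s$-dependent factor $\delta_P^{(s+1)/3}$; I will cancel this by allowing $f_v(s,k)$ itself to carry a compensating $s$-dependent scalar, which is permitted since Proposition~\ref{induced2} imposes no $s$-independence requirement on the restriction of $f_v$ to $K_v$.

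Concretely, pick $g_0\in G_1(F_v)$ with $\phi_v^{T,\nu}(g_0)\neq 0$ and write the Iwasawa decomposition $g_0=n_0\,m(\gamma_0)\,k_0$; by the Bessel transformation law $\phi_v^{T,\nu}(m(\gamma_0)k_0)\neq 0$. Choose an open compact subgroup $L\subset K_{1,v}$ and an open compact neighborhood $U$ of $\gamma_0$ in $\GL_2(F_v)$, both small enough that $\chi_{T,v}\!\circ\det$ and $|\det(\cdot)|_v$ are constant on $U$, that $\gamma\mapsto\phi_v^{T,\nu}(m(\gamma)k_0\ell)$ is constant on $U$ for every $\ell\in L$, and that $\mathbf{1}_U$ is $\omega_v(k_0Lk_0^{-1},1)$-invariant. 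Set $\varphi_v:=\omega_v(k_0^{-1},1)\mathbf{1}_U$, which is a $K_v$-finite Schwartz-Bruhat function satisfying $\omega_v(\ell,1)\varphi_v=\varphi_v$ for $\ell\in L$. Using Proposition~\ref{induced2}, define a $K_v$-finite section $f_v(s,\cdot)\in\text{Ind}(s)$ by
\[
f_v(s,k) \;:=\; c\cdot\delta_P^{-(s+1)/3}\!\bigl(m(\gamma_0)\bigr)\cdot \mathbf{1}_{K_{P,v}\,k_0 L}(k) \qquad (k\in K_v),
\]
for a scalar $c$ to be fixed later. The $s$-dependent prefactor is exactly what will cancel the $\delta_P^{(s+1)/3}(m(\gamma))$ factor arising from the $P$-equivariance of the induced representation, on the support where $|\det\gamma|_v=|\det\gamma_0|_v$.

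Substituting these choices into \eqref{11} and unwinding via the Iwasawa decomposition $G_1(F_v)=N(F_v)M_1(F_v)K_{1,v}$ with the usual measure $\delta_P^{-1}(m)\,dm\,dk$, the formulas from Section~\ref{schrodinger} for the Weil representation collapse the integrand on its support to the product of: (i) the fixed nonzero value $\phi_v^{T,\nu}(m(\gamma_0)k_0)$; (ii) constant values of $\chi_{T,v}(\det\gamma_0)$ and of appropriate powers of $|\det\gamma_0|_v$ coming from $\omega_v(m(\gamma),1)\varphi_v(1_2)$ and the Iwasawa measure; and (iii) the characteristic function of the support. All $s$-dependent powers of $|\det\gamma|_v$ cancel exactly against the compensating factor $\delta_P^{-(s+1)/3}(m(\gamma_0))$ built into $f_v$, and what remains is an $s$-independent constant times $\text{vol}(U)\cdot\text{vol}(L)$; choosing $c$ to be the reciprocal of this product yields $I_v(s)\equiv 1$.

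The main obstacle is the bookkeeping: one must carefully track the interplay of the three sources of $|\det|_v$-factors (the $\delta_P^{(s+1)/3}$ from the section, the $|\det\gamma|_v$ from $\omega_v(m(\gamma),1)\varphi_v(1_2)$, and the $\delta_P^{-1}$ from the Iwasawa measure), verify that the integrand truly is locally constant on its support after accounting for the action of $K_{P,v}\cap N$ and $K_{P,v}\cap M$ on $\varphi_v$ through the Weil representation, and confirm that the support of $f_v\cdot(\omega_v(g,1)\varphi_v(1_2))$ is indeed the small set described. The cancellation of $s$-dependence ultimately rests on the local constancy $|\det\gamma|_v=|\det\gamma_0|_v$ for $\gamma\in U$, which can always be arranged by taking $U$ sufficiently small.
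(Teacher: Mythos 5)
Your proof is correct and rests on the same basic mechanism as the paper's: use the Iwasawa decomposition to reduce $I_v(s)$ to a $\GL_2(F_v)\times K_{1,v}$ integral, then choose $\varphi_v$ and $f_v|_{K_v}$ to be characteristic functions of sets so small that the integrand collapses to a constant times a volume. The difference is where you center the support. The paper centers it at the identity coset: it takes $\varphi_v=1_{K_\phi}$ for $K_\phi$ a small open compact subgroup of $\GL_2(F_v)$ contained in $\ker(\chi_T\circ\det)$, so that on the support $|\det a|_v^{s-1}=1$ and $\chi_T(\det a)=1$ identically; the $s$-dependence disappears outright and the section can be taken \emph{standard} (restriction to $K_v$ independent of $s$), consistent with the global setup of Section~\ref{siegeleisenstein}. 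You instead center the support at an arbitrary $g_0=n_0m(\gamma_0)k_0$ with $\phi_v^{T,\nu}(g_0)\neq 0$, and the residual factor $|\det\gamma_0|_v^{s-1}$ forces you to insert the compensating scalar $\delta_P^{-(s+1)/3}(m(\gamma_0))$ into $f_v(s,\cdot)$, so your section is no longer standard --- it is $q_v^{n(s+1)}$ times a standard section for some integer $n$. This is harmless (such a polynomial section multiplies the Eisenstein series by an entire nonvanishing function, so the analytic input from Kudla--Rallis still applies, and one could equally absorb the factor into $d(s)$ in Theorem~\ref{maintheorem}), but you should flag it, since the paper's Eisenstein series are set up for holomorphic standard sections. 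What your variant buys is robustness: the paper's choice implicitly needs $\phi_v^{T,\nu}$ to be nonvanishing near the identity (its final expression is $\int_{K_\phi(K'\cap K_0)}\phi_v^{T,\nu}(k)\,dk$, which must be nonzero to normalize), whereas your argument works at any point of $G_1(F_v)$ where the Bessel function is nonzero --- the only hypothesis either argument genuinely needs.
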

\begin{proof}
This argument comes directly from~\cite{piatetski-shapirorallis1988}.

Suppose that $K_0$ is an open compact subgroup of $G_1(F_v)$ so that $\phi^{T,\nu}_v$ is right $K_0$ invariant. Consider the isomorphism $p: M_1 \rightarrow \text{GL}_2$, $m(a) \mapsto a$.
Let $K_\phi$ be an open compact subgroup of GL$_2(F_v)$ such that $K_\phi \subseteq p(M_1(F_v) \cap \nolinebreak K_0) \cap \ker \chi_T \circ \det$.
Let $\varphi_v = 1_{K_\phi}$. Since $\varphi_v$ is a smooth function, there exists an open compact subgroup $K' \subseteq G_1(F_v)$ such that $\omega_v(k,1) \varphi_v = \varphi_v$. 
Let $\mathcal{K}=K_{P_1,v} \backslash K_{P_1,v} \cdot (K' \cap K_0)$. By Proposition~\ref{induced2} there is $f_v(s, -) \in \text{Ind}(s)$ so that $f_v(s, -)|_{K_1}=1_{K_{{P_1},v} \cdot (K^\prime \cap K_0)}.$ Then
\begin{IEEEeqnarray}{rCl}
 I_v(s)&=& \int \limits_{N(F_v) \backslash G_1(F_v)} f_v(s, g) \, \phi_v ^{T, \nu}(g) \, \omega_v(g, 1) \varphi_v(1_2) \, dg \nonumber \\
&=& \int \limits_{K_{P_1,v} \backslash K_{1,v}} \int \limits_{\text{GL}_2(F_v)} \delta_P(m(a))^{-1}  f_v(s, m(a)k) \phi_v^{T,\nu} (m(a)k) \nonumber\\ && \qquad \omega_v(m(a)k, 1) \varphi_v(1_2) \, da \, dk \nonumber \\
&=& \int \limits_{K_{P_1,v} \backslash K_{1,v}} f_v(s,k) \int \limits_{\text{GL}_2(F_v)} |\det(a)|_v^{s-2} \phi_v^{T,\nu} (m(a)k)\, \omega_v(m(a)k, 1) \varphi_v(1_2) \, da \, dk \nonumber \\
&=& \int \limits_{\mathcal{K}} \int \limits_{\text{GL}_2(F_v)} |\det(a)|_v^{s-2} \phi_v^{T,\nu} (m(a)k) \, \omega_v(m(a)k, 1) \varphi_v(1_2) \, da \, dk \nonumber \\
&=& \int \limits_{\mathcal{K}} \int \limits_{\text{GL}_2(F_v)} |\det(a)|_v^{s-1} \phi_v^{T,\nu}(m(a) k) \, \chi_T( \det(a)) \, 1_{K_\phi}(a) \, da \, dk \label{blah2}
\end{IEEEeqnarray}
For $a \in K_\phi$, $| \det( a)|_v=1$, and $\phi_v^{T, \nu}(m(a))=1$ , then
\begin{eqnarray*}
& &\int \limits_{\mathcal{K}} \int \limits_{\text{GL}_2(F_v)} |\det(a)|_v^{s-1} \phi_v^{T,\nu}(m(a) k) \, \chi_T( \det(a)) \, 1_{K_\phi}(a) \, da \, dk \\
&=&\int \limits_{K_\phi (K' \cap K_0)} \phi_v^{T,\nu}(k) \, dk.
\end{eqnarray*}
After normalizing measures and $\phi^{T, \nu}_v$, $I_v(s) \equiv 1$.
\end{proof}

\section{Ramified Integrals at Infinite Places}\label{archimedean}
Consider the integral at the infinite places from Proposition~\ref{eulerproduct}.
\begin{equation*}
I_\infty(s)= \int \limits_{N(\mathbb{A}_\infty) \backslash G_1(\mathbb{A}_\infty)} f(s,g) \phi^{T, \nu}(g) \omega(g, 1) \varphi(1_2) \, dg.
\end{equation*}
\begin{prop} \label{archprop}
For every complex number $s_0$ there is a choice of of data $f( s, g) \in$ \text{Ind}$(s)$, and $\varphi=\varphi_\infty \otimes \varphi_{\text{fin}} \in \mathcal{S}( \mathbb{X}(\mathbb{A}))$ such that $I_\infty$ converges to a holomorphic function at $s_0$, and $I_\infty(s_0) \neq 0$.
\end{prop}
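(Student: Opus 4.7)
The plan is to carry out a smooth/archimedean analogue of Proposition~\ref{finiteram}: concentrate the section $f_\infty$ and the Schwartz function $\varphi_\infty$ near a point where $\phi^{T,\nu}$ is nonzero, so that in Iwasawa coordinates the integrand becomes an essentially compactly supported smooth function and the only $s$-dependence is the entire factor $|\det a|_\infty^{s-1}$.

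First, I apply the Iwasawa decomposition $G_1(\mathbb{A}_\infty) = N(\mathbb{A}_\infty) M_1(\mathbb{A}_\infty) K_{1,\infty}$, together with the formulas $\omega(m(a),1)\varphi(x) = \chi_{T}(\det a)|\det a|\varphi(xa)$ and $\omega(n(X),1)\varphi(x) = \psi_{{}^tx T x}^{-1}(X)\varphi(x)$, the Bessel transformation property $\phi^{T,\nu}(n(X)g) = \psi_T(n(X))\phi^{T,\nu}(g)$, and the standard section identity $f(s, m(a)k) = |\det a|^{s+1} f(s,k)$. Absorbing the modular factor $\delta_{P_1}(m(a))^{-1} = |\det a|^{-3}$ coming from the quotient measure on $N\backslash G_1$, the $n(X)$-dependence cancels and one obtains
\begin{equation*}
I_\infty(s) = \int_{K_{P_1,\infty}\backslash K_{1,\infty}} \int_{\GL_2(\mathbb{A}_\infty)} |\det a|_\infty^{s-1}\, \chi_{T,\infty}(\det a)\, f(s,k)\, \phi^{T,\nu}(m(a)k)\, [\omega(k,1)\varphi_\infty](a)\, da\, dk.
\end{equation*}

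Next, since $\phi^{T,\nu}$ is continuous (cf.\ Corollary~\ref{boundedlemma}) and nonzero by the hypothesis of Theorem~\ref{maintheorem}, fix $g_0 = m(a_0)k_0 \in G_1(\mathbb{A}_\infty)$ with $\phi^{T,\nu}(g_0) \neq 0$. By continuity, one can choose a neighbourhood $U$ of the coset of $k_0$ in $K_{P_1,\infty}\backslash K_{1,\infty}$ and a compact neighbourhood $V$ of $a_0$ in $\GL_2(\mathbb{A}_\infty)$, disjoint from the singular locus $\det a = 0$, on which $\phi^{T,\nu}(m(a)k)\chi_{T,\infty}(\det a)$ is bounded below in modulus and has argument confined to a small arc (shrinking $V$ into the component of $a_0$ makes $\chi_{T,\infty}\circ\det$ constant). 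Pick a smooth compactly supported bump $\varphi_\infty \in \mathcal{S}(\mathbb{X}(\mathbb{A}_\infty))$ with support in $V$, and a smooth standard section $f$ whose restriction to $K_{1,\infty}$ is a $K_{P_1,\infty}$-left invariant bump supported in $U$; by shrinking $U$ and invoking continuity of the Weil representation, $[\omega(k,1)\varphi_\infty](a)$ remains supported in a slightly enlarged compact set in $\GL_2(\mathbb{A}_\infty)$ bounded away from $\det a = 0$ for all $k$ in the support of $f$.

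With these choices the full integrand is compactly supported and smooth in $(a,k)$, so $I_\infty(s)$ converges absolutely for every $s \in \mathbb{C}$, depends holomorphically on $s$ through the entire factor $|\det a|^{s-1}$, and $I_\infty(s_0)$ is a nonzero integral of an integrand of nearly constant nonzero argument. To meet the $K_\infty$-finiteness requirement for $f$ imposed by Definition~\ref{induced}, I approximate the bump on $K_{P_1,\infty}\backslash K_{1,\infty}$ by a $K_{1,\infty}$-finite smooth function (dense by Peter--Weyl) and use continuity of the functional $(f,\varphi_\infty) \mapsto I_\infty(s_0)$ in the smooth Fr\'echet topology to conclude that a sufficiently close $K$-finite section still yields $I_\infty(s_0) \neq 0$, with holomorphicity in a neighbourhood of $s_0$ preserved by uniform convergence on compact subsets of $\mathbb{C}$. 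The main obstacle is this density/continuity passage combined with the support control for $\omega(k,1)\varphi_\infty$ as $k$ varies; once these standard facts from smooth representation theory of real reductive groups are granted, the argument concludes.
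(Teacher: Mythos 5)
Your opening reduction is the same as the paper's: Iwasawa decomposition $G_1(\mathbb{A}_\infty)=P_1(\mathbb{A}_\infty)K_\infty$, separation into a $\GL_2(\mathbb{A}_\infty)$-integral $A(k,s)$ against $|\det a|_\infty^{s-1}$ and an outer $K_\infty$-integral, and locating a point $m(a')$ with $\phi^{T,\nu}(m(a'))\neq 0$ via the transformation under $N$. The divergence, and the gap, is in your choice of $\varphi_\infty$. A compactly supported bump on $\mathbb{X}(\mathbb{A}_\infty)$ is not $K_\infty$-finite under the Weil representation: as recorded in Section~\ref{schrodinger}, the $K_{1,v}$-finite vectors at an archimedean place are exactly the functions $p(x)\varphi_v^\circ(x)$, polynomial times Gaussian, and no such function has compact support. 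The $K$-finiteness of $\varphi$ is not a cosmetic requirement here --- it is what makes $\theta_\varphi(\nu^{-1})$ an automorphic form and is used in the global absolute-convergence argument --- so you cannot simply work in all of $\mathcal{S}(\mathbb{X}(\mathbb{A}_\infty))$.

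Your proposed repair (approximate by $K$-finite data and use continuity of $(f,\varphi_\infty)\mapsto I_\infty(s_0)$) is only carried out for $f$, where it is legitimate since $A(\cdot,s_0)$ is a fixed continuous function on $K_\infty$. For $\varphi_\infty$ it fails at exactly the delicate point: writing the Haar measure on $\GL_2$ as $|\det a|^{-2}d^+a$, the inner integral is $\int_{\mathrm{Mat}_2}|\det a|_\infty^{s_0-3}(\cdots)\,d^+a$, which diverges along the hypersurface $\det a=0$ once $\mathrm{Re}(s_0)$ is small unless the test function vanishes there to high order. Hence $\varphi_\infty\mapsto I_\infty(s_0)$ is not an everywhere-defined, let alone continuous, functional on the Schwartz space, and a $K$-finite approximant of your bump (a polynomial times a Gaussian, hence nonvanishing near $\det a=0$) will in general give a divergent integral at $s_0$. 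The paper's proof resolves precisely this by building the vanishing into the $K$-finite vector: it takes $\varphi_\infty(X)=q(X)\,|\det X|_\infty^{n}\,\varphi_\infty^\circ(X)$ with $n$ large enough that $\mathrm{Re}(s_0-1+n)\gg 0$ forces absolute convergence, and then uses density of polynomials to arrange $A(1,s_0)\neq 0$. To fix your argument you should replace the bump by a vector of this form (or otherwise prove that your choice can be made $K_\infty$-finite while retaining convergence at the given $s_0$).
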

The proof of this proposition is essentially given in~\cite{piatetski-shapirorallis1988}*{\S 2}, but is reproduced here with necessary changes.

\begin{proof}
By the Iwasawa decomposition, $G_1(\mathbb{A}_\infty) = P_1(\mathbb{A}_\infty) K_\infty$, where $P_1$ has Levi factor $M_1 \cong \text{GL}_2$. The integral $I_\infty(s)$ may be broken up as a $M_1(\mathbb{A}_\infty)$ integral and a $K_\infty$ integral:
\begin{align*}
I_\infty(s)=& \int \limits_{K_\infty} \int \limits_{\text{GL}_2(\mathbb{A}_\infty)} \delta_P(m(a))^{-1} f(s, m(a)k) \phi^{T, \nu}(m(a)k) \omega(m(a)k, 1) \varphi(1_2) \, da \, dk \\
=&  \int \limits_{K_\infty} f(k,s) \int \limits_{\text{GL}_2(\mathbb{A}_\infty)} |\det(a)|_\infty^{s-2} \phi^{T, \nu}(m(a)k) \omega(m(a)k, 1) \varphi(1_2) \, da \, dk.
\end{align*}
Here $| \, |_\infty$ denotes the valuation on $ \mathbb{A}_\infty$ defined by $| x |_\infty=\prod \limits_v|x_v|_v$, and $\det(a) \in \mathbb{A}_\infty$ with $v$ coordinate equal to $\det(a_v)$.
Since $f(k ,s)$ is a standard section, it is independent of $s$ when restricted to $K_\infty$. Write $f(k,s)=f(k)$ for $k \in K_\infty$.
The integral
\begin{align}
 A(k,s):= \int \limits_{\text{GL}_2(\mathbb{A}_\infty)} |\det(a)|^{s-2} \phi^{T, \nu}(m(a)k) \, \omega(m(a)k, 1) \varphi_\infty(1_2) \, da \label{blah23}
\end{align}
gives a function on $(M_1(\mathbb{A}_\infty) \cap K_\infty) \backslash K_\infty = (P_1(\mathbb{A}_\infty) \cap K_\infty) \backslash K_\infty $. The function $\varphi$ was chosen to be $K$-finite, in particular it is $K_\infty$-finite. Suppose that $\varphi=\otimes_v \varphi_v$, and $\varphi_\infty=\otimes_{v|\infty} \varphi_v$. Since the integrand of \eqref{blah23} is a smooth function of $k$, in the region of absolute convergence, $A(-,s)$ is  smooth function. 

There is some choice of data so that $A(1,s_0) \neq 0$.
The function $\varphi_\infty$ is $K_\infty$-finite. Let $\varphi_\infty^\circ=\bigotimes_{v|\infty} \phi_v^\circ(X_v)$. According to \eqref{schrodinger} it is of the form
\begin{equation*}
 \varphi_\infty(X)= p(X)  \varphi_\infty^\circ(X)
\end{equation*}
where $X =\bigotimes_{v|\infty} X_v \in \mathbb{X}(\mathbb{A}_\infty)$ and $p(X)$ is a polynomial in $\mathbb{X}(\mathbb{A}_\infty)$. In particular, $| \det(X) |_\infty$ is a polynomial in $\mathbb{X}(\mathbb{A}_\infty)$. Pick $p(X)$ to be of the form
\begin{equation*}
 p(X)=q(X) \cdot |\det(X)|_\infty ^n
\end{equation*} 
where $q(X)$ is a polynomial in $\mathbb{X}(\mathbb{A}_\infty)$ and $n \in \mathbb{N}$. By Lemma~\ref{boundedlemma} $\phi^{T, \nu}$ is bounded, so in particular it is bounded on $M_1(\mathbb{A}_\infty)$. Therefore,
\begin{align}
 A(1,s_0) = \int \limits_{\text{GL}_2(\mathbb{A}_\infty)} & |\det(a)|_\infty^{s_0-1+n} \, q(a) \phi^{T, \nu}(m(a)) \, da. \label{integralA}
\end{align}
For $Re(s_0-1+n)>>0$ the integral converges absolutely. Indeed, $\varphi_\infty^\circ$ decays exponentially as the entries of $a$ become large while the rest of the integrand has polynomial growth at infinity. As the entries of $a$ become small, so does $|\det(a)|^{s_0-1+n}$. The other terms in the integrand are bounded.

By assumption on $\phi$, there is some $g \in G_{1}(\mathbb{A}_\infty)$ so that $\phi^{T,\nu}(g) \neq 0$. By the Iwasawa decomposition I can write $g_\infty=na^\prime k$, where $n \in N(\mathbb{A}_\infty)$, $a^\prime \in M_1(\mathbb{A}_\infty)$, and $k \in K_{1, \infty}$. Since $K_{1, \infty}$ acts on the space of $\pi$, replace $\phi$ with $\pi(k) \phi$ because the action of $\pi$ is compatible with taking Bessel coefficients. Assume $k=1$. Since $\phi^{T,\nu}(n a^\prime)=\psi_T(n) \cdot \phi^{T,\nu}(a^\prime)$, and $\psi_T(n) \neq 0$, then $\phi(a^\prime)\neq 0$.
Since polynomials are dense in $L^2(\mathbb{X}(\mathbb{A}_\infty))$, then there is some choice of polynomial $q$ so that $A(1,s_0) \neq 0$.

Therefore, $A(1,s)$ is a nonzero holomorphic function in a neighborhood of $s_0$, and $A(k,s)$ is a $K$-finite function of $k$ on $(M_1(\mathbb{A}_\infty) \cap K_\infty) \backslash K_\infty$.
There is a bijection between $\bigotimes \limits_{v|\infty} Ind_{P(F_v)}^{G(F_v)}$ and $K_v$ finite functions in $L^2((M_1(\mathbb{A}_\infty \cap K_\infty) \backslash K_\infty)$ given by restricting $f$ to $K_v$. 
Therefore, there is a choice of $K$-finite standard section $f(k)$ so that 
\begin{equation*}
 \int \limits_{K_\infty} f(k,s_0) \, A(k,s_0) \, dk \neq 0. \qedhere
\end{equation*}
\end{proof}

\section{Proof of Theorem 1}
This section summarizes the results of previous sections to prove Theorem \ref{maintheorem} which is restated here.

\begin{restatement}
 Let $\pi$ be a cuspidal automorphic representation of GSp$_4(\mathbb{A})$, and $\phi=\otimes_v \phi_v \in V_\pi$ be a decomposable automorphic form. Let $T$ and $\nu$ be such that $\phi^{T,\nu}\neq0$. There exists a choice of section $f(s, -) \in \text{Ind}_{P(\mathbb{A})}^{G(\mathbb{A})}( \delta_P ^{1/3(s-1/2)})$, and some $\varphi=\otimes_v \varphi_v \in \mathcal{S}( \mathbb{X}(\mathbb{A}))$ such that the normalized integral
\begin{equation*}
 I^*(s;f, \phi, T, \nu, \varphi)= d(s) \cdot L^{S}(s, \pi \otimes \chi_T)
\end{equation*}
where $S$ is a finite set of bad places including all the archimedean places. Furthermore, for any complex number $s_0$, the data may be chosen so that $d(s)$ is holomorphic at $s_0$, and $d(s_0) \neq 0$.
\end{restatement}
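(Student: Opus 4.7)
The plan is to assemble the Euler-product factorization of Proposition~\ref{eulerproduct} with the three local statements proved in Sections~\ref{unramifiedchapter}, \ref{ramified}, and~\ref{archimedean}. Let $S$ be the union of the archimedean places together with the finite set of finite places at which some datum among $\phi_v$, $\nu_v$, $\psi_v$, $f_v$, $\varphi_v$ fails the unramified conditions of Definition~\ref{unramifieddef}. By Proposition~\ref{eulerproduct}, in the region of absolute convergence,
\begin{equation*}
I(s;f,\phi,T,\nu,\varphi) \;=\; I_\infty(s)\cdot\prod_{v<\infty} I_v(s),
\end{equation*}
and multiplying by $\zeta^{S}(s+1)\,\zeta^{S}(2s)$ as in \eqref{normalizing} yields the normalized integral $I^*(s)$.

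At every finite $v\notin S$, Propositions~\ref{inertprop1} and~\ref{splitprop1} (covering the inert and split cases respectively) give the identity
\begin{equation*}
\zeta_v(s+1)\,\zeta_v(2s)\,I_v(s)=L(s,\pi_v\otimes\chi_{T,v}),
\end{equation*}
so the product over $v\notin S$ contributes exactly $L^{S}(s,\pi\otimes\chi_T)$. At each finite $v\in S$ (finite in number), Proposition~\ref{finiteram} supplies a $K_v$-finite section $f_v(s,\cdot)$ and a $K_v$-finite Schwartz--Bruhat $\varphi_v$ with $I_v(s)\equiv 1$; these choices are purely local and may be made simultaneously without disturbing the unramified data at places outside $S$. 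At the archimedean places, Proposition~\ref{archprop} supplies, for any prescribed $s_0\in\mathbb{C}$, a choice of $f_\infty(s,\cdot)$ and $\varphi_\infty$ such that $I_\infty(s)$ is holomorphic and nonvanishing at $s_0$. Setting $d(s):=I_\infty(s)$, these choices combine to give
\begin{equation*}
I^*(s;f,\phi,T,\nu,\varphi)=d(s)\cdot L^{S}(s,\pi\otimes\chi_T),
\end{equation*}
with $d$ analytic and nonzero at $s_0$.

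The main subtlety to track is that the hypothesis $\phi^{T,\nu}\neq 0$ is invoked in the archimedean step: as in Proposition~\ref{archprop}, one translates $\phi$ by an element of $K_{1,\infty}$ and uses the Iwasawa decomposition to produce some $a'\in M_1(\mathbb{A}_\infty)$ with $\phi^{T,\nu}(a')\neq 0$, then invokes density of polynomials in $\mathcal{S}(\mathbb{X}(\mathbb{A}_\infty))$ to choose $\varphi_\infty$ so that the inner integral $A(1,s_0)$ is nonzero. The finite bad-place choice in Proposition~\ref{finiteram} is made locally and independently of the archimedean choice, so compatibility is automatic; the decomposable tensor $\phi=\otimes_v\phi_v$ is needed only to guarantee that the integrand factors so that Proposition~\ref{eulerproduct} applies. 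A minor remaining point is that the Euler product identity is initially valid in its domain of absolute convergence $\mathrm{Re}(s)>2$, but both sides admit meromorphic continuation (the left-hand side through Kudla--Rallis and the right-hand side through the explicit rationality of each $L(s,\pi_v\otimes\chi_{T,v})$ in $q_v^{-s}$), so the identity persists wherever both sides make sense --- in particular at the point $s_0$.
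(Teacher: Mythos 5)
Your proposal is correct and follows essentially the same route as the paper: the paper likewise defines $I^*$ via the normalized Eisenstein series (equivalently, multiplying by $\zeta^S(s+1)\zeta^S(2s)$), factors it by Proposition~\ref{eulerproduct}, identifies the factors at $v\notin S$ with $L(s,\pi_v\otimes\chi_{T,v})$ via Propositions~\ref{inertprop1} and~\ref{splitprop1}, makes $I_v(s)\equiv 1$ at finite bad places by Proposition~\ref{finiteram}, and takes $d(s)=I_\infty(s)$ controlled by Proposition~\ref{archprop}. Your added remarks on where $\phi^{T,\nu}\neq 0$ enters and on extending the identity beyond the region of absolute convergence are consistent with, though not spelled out in, the paper's argument.
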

\begin{proof}
There is a finite set of places $S$ including all the archimedean places, such that for $v \notin S$, the conditions in Definition \ref{unramifieddef} are satisfied. Consider the normalized Eisenstein series $E^*(s,f,g)=\zeta^S(s+1) \, \zeta^S (2s) E(s,f,g)$ that was described in \eqref{normalizing}. Define $I^*(s;f, \phi, T, \nu, \varphi)$ to be the global integral defined in \eqref{10} except that $E(s,f,g)$ is replaced by $E^*(s,f,g)$.

By Proposition \ref{eulerproduct} the integral factors as 
\begin{equation*}
 I^*(s;f, \phi, T, \nu, \varphi)=I_\infty(s) \times \prod \limits_{\substack{v < \infty\\ v \in S}} I_v(s) \times \prod \limits_{ v \notin S} I^*_v(s).
\end{equation*}
Here, $I^*_v(s)= \zeta_v (s+1) \zeta_v (2s) I_v(s)$.
According to Proposition \ref{inertprop1} and Proposition \ref{splitprop1} for $v \notin S$,
\begin{equation*}
I^*_v(s)=L(s, \pi_v \otimes \chi_{T,v}).
\end{equation*}
By Proposition \ref{finiteram} for every finite place $v \in S$, there is a choice of local section $f_v(s, -)$ and local Schwartz-Bruhat function $\varphi_v$ so that
\begin{equation*}
 I_v(s) \equiv 1.
\end{equation*}
By Proposition \ref{archprop} there is a choice of data at the infinite places, $f_\infty(s, -)$, and $\varphi_\infty$, so that $I_\infty(s)$ is holomorphic at $s_0$, and $I_\infty(s_0) \neq 0$. 

Choose $f(s, -)=\otimes_v f_v(s, -)$ so that $f_v(s,-)$ is the choice specified above for $v \in S$, and the local spherical section otherwise. Similarly, choose $\varphi = \otimes_v \varphi_v$ so that $\varphi_v$ is the choice specified for $v \in S$, and the local spherical Schwartz-Bruhat function otherwise. This completes the proof of the theorem.
\end{proof}

\end{document}